\newtheorem{theorem}{Theorem}[section]
\newtheorem{lemma}[theorem]{Lemma}
\newtheorem{proposition}[theorem]{Proposition}
\newtheorem{corollary}[theorem]{Corollary}
\def\D{\mathbb D}
\def\C{\mathbb C}
\def\R{\mathbb R}
\def\S{\mathbb S}
\def\N{\mathbb N}
\def\b{\backslash}
\def\l{\langle}
\def\r{\rangle}
\def\Z{\mathbb Z}
\def\T{{\rm Tr}}
\begin{document}

\title[Steklov zeta-invariants and a compactness theorem]{Steklov zeta-invariants and a compactness theorem for isospectral families of planar domains}
\author{Alexandre Jollivet and Vladimir Sharafutdinov}
\thanks{
The first author is partially supported by French grant ANR-13-JS01-0006.\\
This work was partially done when the first author visited Sobolev Institute of Mathematics and Institute of Computational Mathematics and Mathematical Geophysics of the Siberian Branch of RAS in August 2016. The author is grateful to both institutes for their hospitality. This visit was financially supported by
French grant "BQR 2016 Coop\'eration Internationale" from University of Lille 1.\\
The second author was supported by RFBR, Grant 15-01-05929-a.
}
\address{Laboratoire de Math\'ematiques Paul Painlev\'e,
CNRS UMR 8524/Universit\'e Lille 1 Sciences et Technologies,
59655 Villeneuve d'Ascq Cedex, France}
\email{alexandre.jollivet@math.univ-lille1.fr}

\address{Sobolev Institute of Mathematics, Novosibirsk, Russia}
\address{Novosibirsk State University, Russia}
\email{sharaf@math.nsc.ru}

\keywords{Steklov spectrum; Dirichlet-to-Neumann operator; zeta function; inverse spectral problem}

\subjclass[2000]{Primary 35R30; Secondary 35P99}

\begin{abstract}
The inverse problem of recovering a smooth simply connected multisheet planar domain from its Steklov spectrum is equivalent to the problem of determination, up to a gauge transform, of a smooth positive function $a$ on the unit circle from the spectrum of the operator $a\Lambda$, where $\Lambda$ is the Dirichlet-to-Neumann operator of the unit disk.
Zeta-invariants are defined by $Z_m(a)=\T[(a\Lambda)^{2m}-(aD)^{2m}]$ for every smooth function $a$. In the case of a positive $a$, zeta-invariants are determined by the Steklov spectrum. We obtain some estimate from below for $Z_m(a)$ in the case of a real function $a$. On using the estimate, we prove the compactness of a
Steklov isospectral family of planar domains in the $C^\infty$-topology. We also describe all real functions $a$ satisfying $Z_m(a)=0$.
\end{abstract}

\maketitle

\section{Introduction}

Let $\Omega$ be a simply connected (possibly multisheet) planar domain bounded by a $C^\infty$-smooth closed curve $\partial\Omega$. See \cite[Section 4]{JS} for the discussion of simply connected multisheet planar domains. The {\it Dirichlet-to-Neumann operator} of the domain
$$
\Lambda_\Omega:C^\infty(\partial\Omega)\rightarrow C^\infty(\partial\Omega)
$$
is defined by $\Lambda_\Omega f=\frac{\partial u}{\partial\nu}|_{\partial\Omega}$, where $\nu$ is the outward unit normal to $\partial\Omega$ and $u$ is the solution to the Dirichlet problem
$$
\Delta u=0\quad\mbox{\rm in}\quad\Omega,\quad u|_{\partial\Omega}=f.
$$
The Dirichlet-to-Neumann operator is a first order pseudodifferential operator. Moreover, it is a non-negative self-adjoint operator with respect to the $L^2$-product
$$
\l u,v\r=\int\limits_{\partial\Omega}u\bar v\,ds,
$$
where $ds$ is the Euclidean arc length of the curve $\partial\Omega$. In particular, the operator $\Lambda_\Omega$ has a non-negative discrete eigenvalue spectrum
$$
\mbox{\rm Sp}(\Omega)=\{0=\lambda_0(\Omega)<\lambda_1(\Omega)\leq\lambda_2(\Omega)\leq\dots\},
$$
where each eigenvalue is repeated according to its multiplicity. The spectrum is called the {\it Steklov spectrum} of the domain $\Omega$. In particular, for the unit disk
${\mathbb D}=\{(x,y)\mid x^2+y^2\leq1\}$,
$$
\mbox{\rm Sp}({\mathbb D})=\{0=\lambda^0_0<\lambda^0_1\leq\lambda^0_2\leq\dots\}=\{0,1,1,2,2,\dots\}.
$$

Let ${\mathbb S}=\partial{\mathbb D}=\{e^{i\theta}\}\subset{\mathbb C}$ be the unit circle. The Dirichlet-to-Neumann operator of the unit disk will be denoted by $\Lambda:C^\infty({\mathbb S})\rightarrow C^\infty({\mathbb S})$, i.e., $\Lambda=\Lambda_{\mathbb D}$ (this operator was denoted by $\Lambda_e$ in \cite{JS} and \cite{MS}). Given a positive function $a\in C^\infty({\mathbb S})$, the operator $a\Lambda$ has the non-negative discrete eigenvalue spectrum
$$
\mbox{\rm Sp}(a)=\{0=\lambda_0(a)<\lambda_1(a)\leq\lambda_2(a)\leq\dots\}
$$
which is called the {\it Steklov spectrum of the function} $a$ (or of the operator $a\Lambda$).

Two kinds of the Steklov spectrum are related as follows. Given a smooth simply connected planar domain $\Omega$, choose a biholomorphism $\Phi:{\mathbb D}\rightarrow\Omega$ and define the function $0<a\in C^\infty({\mathbb S})$ by $a(z)=|\Phi'(z)|^{-1}\ (z\in{\mathbb S})$. Let $\phi:{\mathbb S}\rightarrow\partial\Omega$ be the restriction of $\Phi$ to ${\mathbb S}$. Then $a\Lambda=\phi^*\Lambda_\Omega\,\phi^{*-1}$ and $\mbox{\rm Sp}(a)=\mbox{\rm Sp}(\Omega)$. See \cite[Section 3]{JS} for details.

The biholomorphism $\Phi$ of the previous paragraph is defined up to a conformal transformation of the disk $\D$, this provides examples of functions with the same Steklov spectrum. Two functions $a,b\in C^\infty({\mathbb S})$ are said to be {\it conformally equivalent}, if there exists a conformal or anticonformal transformation $\Psi$ of the disk ${\mathbb D}$ such that
\begin{equation}
b=\left|\frac{d\psi}{d\theta}\right|^{-1}a\circ\psi,
                                            \label{0.1a}
\end{equation}
where the function $\psi(\theta)$ is defined by $e^{i\psi(\theta)}=\Psi(e^{i\theta})$
($\Psi$ is anticonformal if $\bar\Psi$ is conformal).
If two positive functions $a,b\in C^\infty({\mathbb S})$ are conformally equivalent, then $\mbox{Sp}(a)=\mbox{Sp}(b)$.

Let $D=-i\frac{d}{d\theta}:C^\infty({\mathbb S})\rightarrow C^\infty({\mathbb S})$ be the differentiation with respect to the angle variable.
{\it Steklov zeta-invariants} $Z_m(a)\ (m=1,2,\dots)$ of a function $a\in C^\infty(\S)$ are defined by
\begin{equation}
Z_m(a)=\T[(a\Lambda)^{2m}-(aD)^{2m}].
                                            \label{0.2}
\end{equation}
The expression in brackets is a smoothing operator on $\S$, this fact will be proved below. Remind that every smoothing pseudodifferential operator on a compact manifold has a finite trace. We emphasize that zeta-invariants are well defined for an arbitrary (complex-valued) function $a\in C^\infty(\S)$ although the  spectrum of $a\Lambda$ can be not discrete in the general case. Zeta-invariants are real for a real function $a$, we will mostly study this case. For a positive
function $a\in C^\infty(\S)$, zeta-invariants are uniquely determined by the Steklov spectrum $\mbox{\rm Sp}(a)$. This fact was proved by J.~Edward \cite{E} (without using the term ``zeta-invariants''), see also \cite{MS}.

For a function $u$ on the circle $\S=\{e^{i\theta}\mid\theta\in\R\}$, we write $u(\theta)$ instead of $u(e^{i\theta})$.
Fourier coefficients of $u\in C^\infty(\S)$ are denoted by ${\hat u}_n$, i.e., $u(\theta)=\sum_{n\in\Z}{\hat u}_ne^{in\theta}$. Edward \cite{E} obtained the formula

\begin{equation}
Z_1(a)=\frac{2}{3}\sum\limits_{n=2}^\infty(n^3-n)\, |{\hat a}_n|^2
                                               \label{0.3}
\end{equation}
for a real function $a\in C^\infty(\S)$. E.~Malkovich and V.~Sharafutdinov \cite{MS} generalized the formula to all zeta-invariants: $Z_m(a)$ is expressed by some $2m$-form in Fourier coefficients ${\hat a}_n$. Unfortunately, the latter formula is too complicated to be useful for deriving theoretical results. On the other hand,
Malkovich -- Sharafutdinov's formula is very easy for computerization. Thus, unlike the hard problem of calculating Steklov eigenvalues, zeta-invariants can be easily computed.

The main result of the present paper is the following

\begin{theorem} \label{Th0.1}
Given an integer $m \ge 1$ and a real function $a\in C^{\infty}(\S)$, set $b=a^m$.
The estimate
\begin{equation}
Z_m(a)\ge c_m\sum_{n=m+1}^\infty n^{2m+1}|{\hat b}_n|^2
                                          \label{0.4}
\end{equation}
holds with some positive constant $c_m$ independent of $a$. In particular, $Z_m(a)\ge0$ for every integer $m\ge1$ and for every real function $a\in C^{\infty}(\S)$.
\end{theorem}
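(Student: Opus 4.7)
My plan is to derive an explicit Fourier-coefficient expression for $Z_m(a)$ and then identify within it a positive contribution that accounts for the desired $|\hat{b}_n|^2$ terms. To establish the starting trace formula, observe that $\Lambda e^{in\theta}=|n|e^{in\theta}$ and $De^{in\theta}=ne^{in\theta}$, so the matrix entries in the Fourier basis are $(a\Lambda)_{j,k}=|k|\hat{a}_{j-k}$ and $(aD)_{j,k}=k\hat{a}_{j-k}$. A cyclic expansion of the trace, combined with the identity $|P|-P=2|P|\chi_{\{P<0\}}$, yields
\[
Z_m(a)=2\!\!\sum_{\substack{(n_1,\ldots,n_{2m})\in\Z^{2m}\\ n_1n_2\cdots n_{2m}<0}}\!\! |n_1n_2\cdots n_{2m}|\prod_{j=1}^{2m}\hat{a}_{n_j-n_{j+1}}\quad(n_{2m+1}:=n_1).
\]
The same expansion confirms that $(a\Lambda)^{2m}-(aD)^{2m}$ is smoothing. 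In the case $m=1$ this recovers Edward's formula (\ref{0.3}), and provides a template for the general extraction.

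Next I would reorganise the sum to expose the $|\hat{b}_N|^2$ structure. Put $N:=n_1-n_{m+1}$. For fixed $n_1,n_{m+1}$, the free sum $\sum_{n_2,\ldots,n_m\in\Z}\prod_{j=1}^{m}\hat{a}_{n_j-n_{j+1}}$ is an $m$-fold convolution of $\hat{a}$ evaluated at $N$, hence equals $\hat{b}_N$ by definition; symmetrically the second half produces $\overline{\hat{b}_N}$ for real $a$. If the weight $|n_1\cdots n_{2m}|$ and the sign constraint $\prod n_j<0$ decoupled from the intermediate indices, one would immediately obtain a clean identity $Z_m(a)=\sum_{n_1,n_{m+1}}W(n_1,n_{m+1})|\hat{b}_{n_1-n_{m+1}}|^2$. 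The strategy is to extract from the full combinatorial sum a sub-expression of this shape with $W\ge 0$ and total weight $\asymp N^{2m+1}$. The exponent $2m+1$ matches the classical combinatorial estimates: summing $|n_1\cdots n_{2m}|$ over the intermediate indices contributing to a given $N$ produces a polynomial of degree $2m+1$ in $N$, through an $m$-fold convolution identity of the type $\sum_{p_1+\cdots+p_m=N,\,p_j\ge 1}p_1\cdots p_m=\binom{N+m-1}{2m-1}\asymp N^{2m-1}$ combined with two extra factors from the anchor indices $n_1$ and $n_{m+1}$. The restriction $n\ge m+1$ in the theorem corresponds to the admissibility of this decomposition beyond its trivial boundary.

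The main obstacle is the off-diagonal contribution: configurations in which the weight and the sign constraint genuinely couple the two halves of the cycle. These terms are sign-indefinite (the products $\prod\hat{a}_{q_j}$ need not be non-negative even for real $a$), so they cannot be bounded termwise. The crux of the proof must therefore be a sum-of-squares rearrangement of $Z_m(a)$, most naturally as $Z_m(a)=\|X_m(a)\|_{\mathrm{HS}}^2$ for a concrete operator $X_m(a)$ whose leading high-frequency behaviour is multiplication by $b=a^m$. A plausible route is to iterate the identity $\Lambda-D=-2D\Pi_-$ (with $\Pi_-$ the projection onto strictly-negative Fourier modes) inside the telescoping expansion $(a\Lambda)^{2m}-(aD)^{2m}=\sum_{k=0}^{2m-1}(a\Lambda)^{2m-1-k}\,a(\Lambda-D)(aD)^k$, and then to symmetrise by cyclic permutation to expose a non-negative trace. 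Verifying that the positive piece so obtained dominates the right-hand side of (\ref{0.4}) is where I expect the main technical difficulty to lie.
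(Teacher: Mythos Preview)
Your proposal is a plan, not a proof: you correctly set up the Fourier expansion
\[
Z_m(a)=2\!\!\sum_{n_1\cdots n_{2m}<0}\!\! |n_1\cdots n_{2m}|\prod_{j=1}^{2m}\hat a_{n_j-n_{j+1}},
\]
you correctly diagnose the obstacle (the weight and the sign constraint couple the two halves of the cycle, so the cross terms are sign-indefinite), and you correctly guess that some sum-of-squares rearrangement must exist. But you do not supply one. The telescoping identity $(a\Lambda)^{2m}-(aD)^{2m}=\sum_k(a\Lambda)^{2m-1-k}a(\Lambda-D)(aD)^k$ does not by itself produce a non-negative trace: each summand is of the form $\T[A\cdot a(\Lambda-D)\cdot B]$ with $A\neq B^*$ in general, and cyclic symmetrisation alone does not repair this. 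So the proof, as written, has a genuine gap exactly where you say the ``main technical difficulty'' lies.

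The paper closes this gap by abandoning the direct Fourier approach and working operator-theoretically. One writes $Z_m(a)=\T[L^{2m}-(LH)^{2m}]$ with $L=\Lambda^{1/2}a\Lambda^{1/2}$ self-adjoint and $[L,H]$ smoothing, and then proves a purely abstract inequality (Theorem~\ref{Th1.1}): for the family $G_\delta=L^{\delta_1}H\cdots L^{\delta_\ell}H$ indexed by compositions $\delta$ of $m$, the function $\varphi(\ell)=\max_{\delta\in\Delta_\ell}\T[G_\delta^*G_\delta-(LH)^{2m}]$ is non-increasing in $\ell$. The monotonicity step is a repeated application of the trace Cauchy--Schwarz inequality $\Re\T[AB-CH]\le\tfrac12\T[AA^*-CH]+\tfrac12\T[B^*B-CH]$ (Lemma~\ref{L1.3}), together with a cyclic rearrangement lemma (Lemma~\ref{L1.1}). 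This yields
\[
Z_m(a)=\varphi(1)\ \ge\ \varphi(m)=\T\big[(HL)^m(LH)^m-(LH)^{2m}\big]\ \ge\ 0,
\]
and the middle quantity, unwound in the Fourier basis, is exactly $4\sum_{n,k>0}nk\,|(\widehat{g_n})_{-k}|^2$ with $g_n=(aD)^{m-1}a\,e^{in\theta}$ (Lemma~\ref{L2.1}). This is the sum-of-squares you were looking for; note that it is \emph{not} $\|X_m(a)\|_{\mathrm{HS}}^2$ for a single operator, and it is an inequality, not an identity.

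There is a second idea you are missing for the final step. Expanding $g_n=\sum_{s=1}^m n^{s-1}f_s\,e^{in\theta}$ with $f_m=a^m=b$, the lower bound becomes $\sum_j j^3 F_j(\tilde f_j,\tilde f_j)$ for an explicit Hermitian form $F_j$ on $\C^m$. The estimate $F_j(x,x)\ge c_m\|x\|^2$ for $j\ge m+1$ is a Vandermonde non-degeneracy argument (Lemma~\ref{L3.1}): the form $F_j$ is a Riemann sum for $\int_0^1 t(1-t)|\sum_s t^{s-1}x_s|^2\,dt$, and both are positive definite once $j>m$. This is what produces the threshold $n\ge m+1$ and the exponent $2m+1$, and it does not emerge from the convolution count $\sum_{p_1+\cdots+p_m=N}p_1\cdots p_m$ you sketched.
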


For $m=1$, estimate \eqref{0.4} follows from Edward's formula \eqref{0.3}. Theorem \ref{Th0.1} was first conjectured as a result of a lot of numerical experiments based on Malkovich -- Sharafut\-di\-nov's formula, we are grateful to E.~Malkovich for his help with computer calculations. For a positive function $a$, the statement
$Z_m(a)\ge0$ follows from a more general inequality proved by the authors \cite{JS2}.

The question of describing the null space of a zeta-invariant was posed in \cite{MS}. The question is closely related to the above-defined conformal equivalence of functions. The following theorem gives the full answer to the question for real functions.

\begin{theorem}  \label{Th0.2}
For every integer $m\geq1$ and for every real function $a\in C^\infty(\S)$,
$Z_m(a)=0$
if and only if the function $a$ is of the form
\begin{equation}
a(\theta)={\hat a}_0+2\Re({\hat a}_1 e^{i\theta})
\quad\mbox{with some }{\hat a}_0\in\R\mbox{ and }{\hat a}_1\in\C.
                              \label{0.5}
\end{equation}
\end{theorem}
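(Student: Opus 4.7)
The ``if'' direction is clean. Any positive real $a$ of the form \eqref{0.5} equals $c|\psi'|^{-1}$ for a positive constant $c$ and some $\psi\in\mathrm{Aut}(\D)$; indeed, writing $a=A_0+2\Re(A_1 e^{i\theta})$ with $A_0>2|A_1|$, one recovers $(c,\alpha)$ from $A_0/c=(1+|\alpha|^2)/(1-|\alpha|^2)$ and $A_1/c=\bar\alpha/(1-|\alpha|^2)$ where $\psi(z)=(z+\alpha)/(1+\bar\alpha z)$. So $a$ is conformally equivalent to the positive constant $c$. Since for positive functions $Z_m(a)$ is determined by the Steklov spectrum (Edward's result recalled in the introduction), and conformally equivalent positive functions share the spectrum, $Z_m(a)=Z_m(c)=c^{2m}Z_m(1)=0$. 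The identity $Z_m(1)=\T[\Lambda^{2m}-D^{2m}]=0$ holds because $\Lambda^2=D^2$ on the Fourier basis implies $\Lambda^{2m}=D^{2m}$. To extend to all real $a$ of form \eqref{0.5} (possibly vanishing or sign-changing), observe that $Z_m(a)$ depends polynomially on the three real coordinates $(\hat a_0,\Re\hat a_1,\Im\hat a_1)\in\R^3$ and vanishes on the nonempty open subset $\{\hat a_0>2|\hat a_1|\}$; the identity principle then gives $Z_m\equiv 0$ on all of $\R^3$.

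For the ``only if'' direction, assume $Z_m(a)=0$ with $a$ real. Theorem~\ref{Th0.1} applied to $a$ gives $\hat b_n=0$ for $n\ge m+1$, and reality of $b=a^m$ extends this to $|n|\ge m+1$, so $b$ is a trigonometric polynomial of degree at most $m$. Moreover Theorem~\ref{Th0.1} gives $Z_m\ge 0$ on every real input, so $a$ realizes the global minimum $0$ of $Z_m$; hence the first variation vanishes in every real direction:
\[
0 \;=\; \partial_t Z_m(a+th)|_{t=0} \;=\; 2m\,\T\bigl[h\,M_m(a)\bigr],\qquad M_m(a):=\Lambda(a\Lambda)^{2m-1}-D(aD)^{2m-1},
\]
for every real $h\in C^\infty(\S)$. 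Equivalently, the restriction to the diagonal of the Schwartz kernel of $M_m(a)$ vanishes identically on $\S$. Expanding in the Fourier basis using $\Lambda e^{in\theta}=|n|e^{in\theta}$ and $De^{in\theta}=ne^{in\theta}$, the $j$th Fourier coefficient of this diagonal function equals, up to a nonzero normalization,
\[
\sum_{k_1+\cdots+k_{2m-1}=j}\;\biggl(\sum_{n\in\Z}\Bigl(\textstyle\prod_{i=0}^{2m-1}|n+s_i|\;-\;\prod_{i=0}^{2m-1}(n+s_i)\Bigr)\biggr)\,\hat a_{k_1}\cdots\hat a_{k_{2m-1}},
\]
with $s_0=0$ and $s_i=k_1+\cdots+k_i$; the inner sum is finite since its summand vanishes whenever all factors $n+s_i$ share the same sign. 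For $m=1$ this collapses to $\tfrac13(|j|^3-|j|)\hat a_j=0$, yielding $\hat a_j=0$ for $|j|\ge 2$ and the form \eqref{0.5} directly, which recovers Edward's formula.

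The hard part is the analogous reduction for $m\ge 2$, where one must deduce $\hat a_n=0$ for $|n|\ge 2$ from the degree-$(2m-1)$ system of polynomial equations above combined with the trigonometric polynomiality of $a^m$. A natural route is first to use the condition on $a^m$ to deduce that $a$ is real-analytic (since every zero of $b$ on $\S$ must have order divisible by $m$, by the smoothness of $a$), extend $a$ to a holomorphic $\tilde a$ on an annulus $A$ around $\S$, and observe that $F(z):=z\tilde a(z)$ satisfies $F^m=Q(z)$ for a polynomial $Q$ of degree at most $2m$. If one can harness the first-variation identity to rule out every branch point of $\tilde a$ off $A$ --- equivalently, to force every zero of $Q$ in $\C^*$ to have order divisible by $m$ --- then $F$ extends to an entire function with $|F(z)|^m=|Q(z)|=O(|z|^{2m})$, so $F$ is a polynomial of degree at most $2$, and $\tilde a=F/z$ is a Laurent polynomial with $z^{-1}$-pole of order at most $1$, i.e., $a$ has the form \eqref{0.5}. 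Extracting this ``branch-point control'' from the first-variation identity, beyond what Theorem~\ref{Th0.1} already supplies, is expected to be the main obstacle.
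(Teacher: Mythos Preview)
Your ``if'' direction is correct, though it takes a detour through conformal equivalence and the identity principle for polynomials. The paper's argument is more direct: for $a$ of the form \eqref{0.5} one checks on the Fourier basis that $a\Lambda=HaD=aDH$, so $(a\Lambda)^{2m}=(aDH)^{2m}=(aD)^{2m}$ and $Z_m(a)=0$ immediately, with no positivity assumption needed.

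Your ``only if'' direction is genuinely incomplete for $m\ge 2$, as you yourself acknowledge. The first-variation identity $\T[hM_m(a)]=0$ only tells you the diagonal of the Schwartz kernel of $M_m(a)$ vanishes; this is a single scalar constraint at each point of $\S$, and there is no clear mechanism by which it yields the ``branch-point control'' you describe. The paper bypasses this entirely by invoking Theorem~\ref{Th1.2}: writing $L=\Lambda^{1/2}a\Lambda^{1/2}$, the hypothesis $Z_m(a)=\T[L^{2m}-(LH)^{2m}]=0$ forces the \emph{operator identity} $L^2H=HL^2$. Applying this to $e^{i\theta}$ yields that $\tilde h:=a\Lambda(ae^{i\theta})$ admits a holomorphic extension to $\D$. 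This is the extra analytic input you are missing, and it is much stronger than any diagonal condition. Combined with the fact (which you do have, via Theorem~\ref{Th0.1}) that $\delta:=(ae^{i\theta})^m$ extends holomorphically as a polynomial of degree $\le 2m$, the paper then builds from $\tilde h$ and $\delta$ three auxiliary holomorphic functions $f,g,h$ satisfying $h=a^2f+ag$, and uses a determinantal argument together with a factorization lemma (Lemma~\ref{L4.1}) to force $a$ into the form \eqref{0.5}. Your polynomial $Q=\delta$ is exactly the paper's $\delta$, and your desired conclusion that its zeros in $\C^*$ have order divisible by $m$ is essentially the content of Lemma~\ref{L4.1}; but the hypothesis of that lemma requires an equation of the form $a^r\varphi^r=\psi^r$ with $\varphi,\psi$ holomorphic, and producing such $\varphi,\psi$ is precisely where the equation coming from $L^2H=HL^2$ is used.
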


Observe that \eqref{0.5} holds if and only if $a$ is conformally equivalent to a constant function. The latter fact is not used in our proof. Again, the statement of the theorem follows from Edward's formula \eqref{0.3} in the case of $m=1$. The ``if'' statement of Theorem \ref{Th0.2} is proved in \cite[Section 6]{MS}. Moreover, the following more general statement is proved there. For every integer $m\ge1$, $Z_m(a)=0$ if $a\in C^\infty(\S)$ is a (complex-valued) function satisfying ${\hat a}_n=0$ for $|n|>1$. Our proof is independent of the latter statement.

The inverse problem of recovering a function $0<a\in C^\infty(\S)$ from the Steklov spectrum $\mbox{\rm Sp}(a)$ seems to be very difficult. It makes sense to start with easier questions of the following kind. Given $0<a\in C^\infty(\S)$, set
$$
{\mathcal A}=\{0<\tilde a\in C^\infty(\S)\mid\tilde a\mbox{ is conformally equivalent to } a\}.
$$
How far off ${\mathcal A}$ can be a function $0<b\in C^\infty(\S)$ satisfying $\mbox{\rm Sp}(b)=\mbox{\rm Sp}(a)$? In this direction, we prove the following compactness theorem.

\begin{theorem}  \label{Th0.3}
Let $a_n\in C^\infty(\S)\ (n=1,2,\dots)$ be a sequence of positive functions such that the Steklov spectrum $\mbox{\rm Sp}(a_n)$ is independent of $n$.
There exists a subsequence $a_{n_k}\ (k=1,2,\dots)$ such that every $a_{n_k}$ is conformally equivalent to some $b_k\in C^\infty(\S)$ and the sequence
$b_k\ (k=1,2,\dots)$ converges to a positive function $b\in C^\infty(\S)$ in the $C^\infty$-topology, i.e., $D^{\ell}b_k\rightarrow D^{\ell}b$ as $k\rightarrow\infty$ uniformly on $\S$ for every $\ell\in\N$.
\end{theorem}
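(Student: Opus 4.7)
The plan is to combine the Fourier-coefficient bound of Theorem~\ref{Th0.1} with the three-dimensional freedom in the conformal equivalence~\eqref{0.1a} to produce uniformly smooth representatives.

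Since the $a_n$ share a common Steklov spectrum, the zeta-invariants $Z_m(a_n)=:Z_m$ are independent of $n$ for every $m\ge1$. Theorem~\ref{Th0.1} then yields the uniform control
\begin{equation*}
\sum_{\ell\ge m+1}\ell^{2m+1}|\widehat{a_n^m}_\ell|^2\le Z_m/c_m
\qquad(m=1,2,\dots),
\end{equation*}
so that $a_n^m$ is bounded in $H^{m+1/2}(\S)$ modulo the Fourier modes $|\ell|\le m$. In particular, the case $m=1$ controls $a_n$ in $H^{3/2}$ modulo $\mathrm{span}\{1,e^{\pm i\theta}\}$.

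Next I use the $3$-parameter Möbius group $\mathrm{Aut}(\D)$ to choose, in each conformal class, a representative $b_n$ satisfying $\widehat{b_n}_1=0$ (two real conditions, absorbed by the family $z\mapsto(z-z_0)/(1-\bar z_0 z)$) together with a rotational normalization. Two further inputs then control the remaining zeroth coefficient $\widehat{b_n}_0$. First, the perimeter $L:=\int_\S b_n^{-1}\,d\theta$ is a Steklov spectral invariant (from the leading Weyl coefficient of $b_n\Lambda$), and Cauchy--Schwarz applied to $\int 1\cdot 1\,d\theta$ yields $\widehat{b_n}_0\ge 2\pi/L$. Second, if $\widehat{b_n}_0\to\infty$ along a subsequence then, by the $Z_1$-estimate and the embedding $H^{3/2}(\S)\hookrightarrow C^0(\S)$, one has $b_n\ge\widehat{b_n}_0-C\to\infty$ uniformly, hence $\int b_n^{-1}\to 0\ne L$, a contradiction. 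Thus $\{b_n\}$ is uniformly bounded in $H^{3/2}\cap C^0$, and a subsequence converges strongly in $C^0$ (and weakly in $H^{3/2}$) to some $b\in H^{3/2}$ with $b\ge 0$.

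Granting a uniform lower bound $b_n\ge c>0$, the $L^\infty$-bound on $b_n$ implies that the low Fourier modes of $b_n^m$ are bounded, so Theorem~\ref{Th0.1} upgrades to a uniform $H^{m+1/2}$-bound on $b_n^m$. These transfer to bounds on $b_n=(b_n^m)^{1/m}$ by Moser-type composition estimates, since $x\mapsto x^{1/m}$ is $C^\infty$ on $[c^m,\infty)$. A diagonal subsequence then converges in every Sobolev norm and hence in $C^\infty$, with a positive $C^\infty$ limit. The main obstacle is precisely the uniform positivity $b_n\ge c>0$: the one-sided Theorem~\ref{Th0.1} estimates do not by themselves exclude narrow troughs where $b_n\to0$. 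Securing this bound requires the interplay of the entire family $\{Z_m(b_n)\}$ with the fixed perimeter constraint $\int b_n^{-1}=L$; the heuristic is that a trough of depth $\eta$ and width $w$ contributes $\sim w/\eta$ to $\int b_n^{-1}$, while the higher-$m$ bounds on $b_n^m$ dictate how quickly $b_n$ can rise from $\eta$, forcing $w$ to be so large that $\int b_n^{-1}\to\infty$ as $\eta\to 0$, which is incompatible with the fixed value $L$.
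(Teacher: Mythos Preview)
Your overall architecture matches the paper's: normalize conformally so that $\widehat{b_n}_1=0$, use Theorem~\ref{Th0.1} to bound $b_n^m$ in $H^{m+1/2}$, transfer these to $b_n$, and run a diagonal argument. Your treatment of the upper bound for $\widehat{b_n}_0$ via the perimeter and the $Z_1$-estimate is fine (the paper simply quotes Edward for this), and the passage from $H^{m+1/2}$-bounds on $b_n^m$ to $H^m$-bounds on $b_n$ works once positivity is secured; the paper does it by the $\log$ trick $\log b_n=\tfrac{1}{m}\log b_n^m$ rather than Moser composition, but that is cosmetic.

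The genuine gap is exactly where you flag it: the uniform lower bound $b_n\ge c>0$. You offer only a heuristic, and that heuristic does not close. From $b_n^m$ bounded in $H^{m+1/2}\subset C^{m-1}$ you get, at a minimum $\theta_0$ with $b_n(\theta_0)=\eta$, only the Taylor control $b_n^m(\theta)\le \eta^m+C_m|\theta-\theta_0|^2$ (the quadratic term dominates near $\theta_0$), hence $b_n(\theta)\lesssim |\theta-\theta_0|^{2/m}$ and $\int b_n^{-1}$ picks up at most a finite contribution, since $\int_0 s^{-2/m}\,ds<\infty$ for $m\ge 3$. Pushing $\alpha\uparrow 1$ in the H\"older embedding $H^{3/2}\subset C^{0,\alpha}$ does not help either: the embedding constant blows up at the same rate the integral $\int s^{-\alpha}$ threatens to diverge, so no contradiction with $\int b_n^{-1}=L$ emerges. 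In short, the ``trough'' argument as stated does not force $\int b_n^{-1}\to\infty$.

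The paper does not extract the lower bound from the $Z_m$'s and the perimeter at all; it invokes Edward's earlier result (recorded here as Lemma~\ref{L5.2}), whose proof uses the \emph{additional} spectral invariants $\zeta_a(-1)$ and $\zeta_a(-3)$ of the Steklov zeta function, not just the $Z_m$. So to complete your argument you must either (i) import Edward's Lemma~\ref{L5.2} as a black box, or (ii) find a new argument that genuinely combines the full tower of $H^{m+1/2}$-bounds on $b_n^m$ with the perimeter constraint in a way that rules out vanishing---your sketch does not yet do this.
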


The theorem gives the positive answer to Edward's question \cite{E2} who has proved the corresponding pre-compactness theorem in the Sobolev $H^s$-topology for $s<5/2$. In his proof, Edward uses first two zeta-invariants and the values $\zeta_a(-1),\zeta_a(-3)$ of the zeta function. Our proof follows the same line with using estimate \eqref{0.4} for all zeta-invariants.

The paper is organized as follows.

Let $\Psi({\mathbb S})$ be the algebra of all pseudodifferential operators on ${\mathbb S}$ considered as an algebra over ${\mathbb C}$.  In Section 2, we consider the subalgebra ${\mathbb C}[L,H]$ of $\Psi({\mathbb S})$ generated by the Hilbert transform $H$ and a general self-adjoint operator
$L\in\Psi({\mathbb S})$ that commutes with $H$ up to a smoothing operator. We prove a number of statements on traces of some smoothing operators belonging to
${\mathbb C}[L,H]$. Theorems \ref{Th0.1} and \ref{Th0.2} are the most important examples of such statements. Indeed, definition \eqref{0.2} can be written in terms of the operator
\begin{equation}
L=\Lambda^{1/2}a\Lambda^{1/2}
                              \label{0.6}
\end{equation}
(we denote the operator of multiplication by a function $a$ by the same letter $a$) as
\begin{equation}
Z_m(a)=\T[L^{2m}-(LH)^{2m}].
                                            \label{0.7}
\end{equation}
The commutator $[L,H]$ is a smoothing operator as is shown below.
Proofs of Theorems \ref{Th0.1} and \ref{Th0.2} are presented in Sections 3 and 4 respectively. Section 5 contains the proof of Theorem \ref{Th0.3} and an interpretation of the theorem in terms of a Steklov isospectral family of planar domains.

The proof of Theorem \ref{Th0.3} is independent of Theorem \ref{Th0.2}. If a reader is not interested in Theorem \ref{Th0.2}, he/she does not need to read Theorem \ref{Th1.2} and the rest of Section 2 as well as Section 4. The proof of Lemma \ref{L1.1} is presented in Appendix. If a reader either knows a better proof of the equality $\T[AB-C]=\T[BA-C]$ in the setting of Lemma \ref{L1.1} or believes that the equality is always true, he/she can do not look into Appendix.

\section{Algebra ${\mathbb C}[L,H]$}

Let $\S=\{e^{i\theta}\mid\theta\in{\mathbb R}\}$ be the unit circle. For a function $u\in C(\S)$, we write $u(\theta)$ instead of $u(e^{i\theta})$.
The $L^2$-product is denoted by
$$
\l u,v\r=\frac{1}{2\pi}\int\limits_0^{2\pi}u(\theta)\bar v(\theta)\,d\theta
$$
and the corresponding norm is $\|\cdot\|$. The factor $(2\pi)^{-1}$ is included to make $\{e^{in\theta}\}_{n\in\Z}$ the orthonormal basis of $L^2(\S)$.

We define the Hilbert transform $H:C^\infty(\S)\rightarrow C^\infty(\S)$ by
\begin{equation}
He^{in\theta}=
\left\lbrace
\begin{array}{l}
e^{in\theta}\quad \textrm{if}\quad n\geq 0,\\
-e^{in\theta}\quad \textrm{if}\quad n<0.
\end{array}
\right.
                                      \label{1.0}
\end{equation}
Our definition is slightly different of the standard one: $He^{in\theta}=0$ for $n=0$ according to the standard definition.
Observe that $H$ is a zero order pseudodifferential operator. Additionally, it is a unitary operator on $L^2(\S)$  satisfying $H^2=I$.

Let $\Psi({\mathbb S})$ be the algebra of all pseudodifferential operators on ${\mathbb S}$ considered as an algebra over ${\mathbb C}$. We fix an operator $L\in\Psi({\mathbb S})$ and consider the subalgebra ${\mathbb C}[L,H]$ of $\Psi({\mathbb S})$ generated by $L$ and $H$. From the algebraic viewpoint, ${\mathbb C}[L,H]$ is the algebra of polynomials in two variables $(L,H)$.
Every monomial of the algebra ${\mathbb C}[L,H]$
can be written in the form $\lambda A\ (\lambda\in{\mathbb C})$, where
\begin{equation}
A=L^{j_1}H\dots L^{j_s}H\quad ( 0\leq j_\alpha\in{\mathbb N}\quad \mbox{for}\quad \alpha=1,\dots,s).
                                      \label{1.1d}
\end{equation}
The sum $j_1+\dots+j_s$ is called the degree of the monomial in $L$ while $s$ is the degree of the monomial in $H$.

Assume now that the commutator $[L,H]$ is a smoothing operator. Then we can commute, up to a smoothing operator, factors of product (\ref{1.1d}). In this way (\ref{1.1d}) can be reduced, up to a smoothing operator, either to $L^{j_1+\dots+j_s}$ (if the degree of $A$ in $H$ is even) or to $L^{j_1+\dots+j_s}H$ (if the degree of $A$ in $H$ is odd). In particular, the following statement is valid.

\begin{lemma} \label{L1.0}
If two products $A_1$ and $A_2$ of form (\ref{1.1d}) are of the same degree in $L$ and their degrees in $H$ are of the same evenness, then the difference $A_1-A_2$ is a smoothing and hence finite trace operator.
\end{lemma}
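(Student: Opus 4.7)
The plan is to show that every monomial $A$ of form (\ref{1.1d}) is congruent, modulo the two-sided ideal $\mathcal{S}\subset\Psi(\S)$ of smoothing operators, to a canonical representative depending only on its $L$-degree $N=j_1+\cdots+j_s$ and on the parity of its $H$-degree $s$. Once this reduction is in hand, the lemma is immediate: $A_1$ and $A_2$ reduce to the same canonical element, so $A_1-A_2\in\mathcal{S}$, and any smoothing operator on the compact manifold $\S$ has finite trace.

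First I would collect the algebraic ingredients. The set $\mathcal{S}$ is a two-sided ideal in $\Psi(\S)$, so smoothing perturbations at any position propagate under multiplication. By hypothesis $[L,H]\in\mathcal{S}$, and a short induction on $k$ based on the identity
\[
[L^{k+1},H]=L\,[L^k,H]+[L,H]\,L^k
\]
yields $[L^k,H]\in\mathcal{S}$ for every integer $k\ge0$. Consequently, inside any product we may freely commute a power $L^k$ past a single $H$ at the cost of an error in $\mathcal{S}$.

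Next I would prove by induction on $s$ that
\[
L^{j_1}HL^{j_2}H\cdots L^{j_s}H\equiv L^{j_1+\cdots+j_s}H^s\pmod{\mathcal{S}}.
\]
At the inductive step the leftmost $H$ is commuted past the block $L^{j_2}$, producing $L^{j_1+j_2}HHL^{j_3}H\cdots L^{j_s}H$ modulo $\mathcal{S}$; iterating collects all powers of $L$ on the left. Since $H^2=I$, the factor $H^s$ collapses to $I$ when $s$ is even and to $H$ when $s$ is odd, yielding the canonical form $L^N$ or $L^NH$. Thus two monomials $A_1,A_2$ of form (\ref{1.1d}) with identical $L$-degree $N$ and identical parity of $H$-degree both reduce mod $\mathcal{S}$ to the same canonical element, so $A_1-A_2\in\mathcal{S}$ and is therefore of trace class on $L^2(\S)$. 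There is no genuine obstacle in this outline; the only point demanding some care is the inductive verification that $[L^k,H]\in\mathcal{S}$, which rests essentially on the two-sided ideal property of $\mathcal{S}$.
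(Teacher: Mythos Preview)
Your proposal is correct and follows exactly the approach the paper itself uses: the paper's proof is contained in the paragraph immediately preceding the lemma, where it is observed that, since $[L,H]$ is smoothing, factors in any product (\ref{1.1d}) can be commuted modulo a smoothing operator, reducing the monomial to $L^{j_1+\dots+j_s}$ or $L^{j_1+\dots+j_s}H$ according to the parity of $s$. Your write-up simply spells out the details (the two-sided ideal property of $\mathcal{S}$, the induction giving $[L^k,H]\in\mathcal{S}$, and the use of $H^2=I$) that the paper leaves implicit.
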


An integer $m\geq1$ will be fixed till the end of the current section. The dependence of different quantities on $m$ will not be designated explicitly.
The proof of the following lemma is presented in Appendix.

\begin{lemma} \label{L1.1}
Let $L$ be a self-adjoint pseudodifferential operator on $\S$ such that the commutator $[L,H]$ is a smoothing operator.
Let $A_1$ and $A_2$ be two products of form (\ref{1.1d}) whose degrees in $H$ are of the same evenness. Assume that $m_1+m_2=2m$, where $m_i$ is the degree of $A_i$ in $L$ ($i=1,2$). Then
\begin{equation}
\T\,[A_1A_2-(LH)^{2m}]=\T\,[A_2A_1-(HL)^{2m}].
                                      \label{1.2}
\end{equation}
\end{lemma}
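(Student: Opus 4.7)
The plan is to reduce the identity to the vanishing of the trace of two smoothing commutators of pseudodifferential operators. Subtracting the two sides, I would first rewrite the claim as
\[
\T[A_1A_2 - A_2A_1] \;=\; \T[(LH)^{2m} - (HL)^{2m}],
\]
and then show that \emph{both} traces vanish separately. Each expression is smoothing (hence trace-class) by Lemma~\ref{L1.0}: the products $A_1A_2$ and $A_2A_1$ have the same $L$-degree $2m$ and the same $H$-degree $s_1+s_2$ (which is even by the evenness hypothesis on $s_1, s_2$), and the same is true for $(LH)^{2m}$ and $(HL)^{2m}$.

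The next step is to recognize each difference as a genuine commutator. Trivially $A_1A_2 - A_2A_1 = [A_1, A_2]$. For the second one, using $L \cdot H(LH)^{2m-1} = (LH)^{2m}$ together with the elementary identity $(LH)^{k}L = L(HL)^{k}$ (a short induction on $k$), which gives $H(LH)^{2m-1}\cdot L = HL(HL)^{2m-1} = (HL)^{2m}$, one obtains
\[
(LH)^{2m} - (HL)^{2m} \;=\; \bigl[L,\, H(LH)^{2m-1}\bigr].
\]
The lemma then follows from the general trace-vanishing principle: for any $P, Q \in \Psi(\S)$, whenever $[P, Q]$ is trace-class one has $\T[[P,Q]] = 0$. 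Morally, this is the vanishing $\sum_{n,k}(P_{nk}Q_{kn} - Q_{nk}P_{kn}) = 0$ of the doubly indexed sum of Fourier matrix entries, rigorous under absolute convergence of the double series.

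The principal obstacle is that the operators $A_1$, $A_2$, $L$, and $H(LH)^{2m-1}$ are typically unbounded and not Hilbert-Schmidt, so the naive Fubini rearrangement of their Fourier matrix entries is not directly legitimate. A rigorous proof of the trace-vanishing either appeals to the Wodzicki noncommutative residue theory on the closed manifold $\S$ (the residue vanishes on commutators, and for trace-class commutators this can be leveraged to force vanishing of the ordinary trace), or proceeds by regularization: replace $L$ by the finite-rank truncation $L_N = p_N L p_N$, with $p_N$ the spectral projection onto $\{e^{in\theta} : |n|\le N\}$, verify the corresponding identity for each $N$ via the standard cyclic property of the trace on trace-class operators, and then pass to the limit $N \to \infty$ using the $C^\infty$-convergence of the Schwartz kernels of the smoothing differences, guaranteed by the pseudodifferential calculus together with the smoothness of $[L,H]$. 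This regularization argument is, I expect, what the Appendix carries out in full detail.
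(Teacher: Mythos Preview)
Your reduction is sound: subtracting the two sides and splitting via Lemma~\ref{L1.0} into $\T[A_1A_2-A_2A_1]$ and $\T[(LH)^{2m}-(HL)^{2m}]$ is legitimate, and writing each as a commutator is correct. The gap is exactly where you flag it: the assertion that $\T[P,Q]=0$ whenever $P,Q\in\Psi(\S)$ have trace-class commutator. Neither of your proposed justifications is carried through. The Wodzicki residue argument as you sketch it does not work: the residue of any trace-class $\Psi$DO vanishes automatically (order $<-1$), so its vanishing on commutators gives no information about the operator trace. The truncation $L_N=p_NLp_N$ is more promising, but ``$C^\infty$-convergence of the Schwartz kernels'' has to be proved, not asserted; note that $(HL_N)^k\neq p_N(HL)^kp_N$, so the smoothing differences do not factor cleanly, and the unbounded outer factors must be controlled uniformly in $N$. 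This can probably be made rigorous, but it is real work you have not done.

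The paper's Appendix takes a quite different and more elementary route. It never invokes a general trace-of-commutator principle. Instead it proves the dedicated Lemma~\ref{LA.1}: if $B$ is \emph{smoothing} (not merely one factor of a trace-class commutator), then $\T[AB]=\T[BA]$ for any $\Psi$DO $A$. The proof is a one-line trick---conjugate by $(\Lambda+P_0)^\mu$ to make $A$ bounded---after which the standard cyclicity for (bounded)$\times$(trace-class) applies. Lemma~\ref{L1.1} is then obtained by induction on the word length $s$: at each step one peels off a genuinely smoothing factor $[H,L^{j}]$ from the word, applies Lemma~\ref{LA.1} to cycle the remaining (non-smoothing) block around it, and reduces to shorter words. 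So the paper exploits the \emph{specific} structure that the smoothing part of every difference can be exhibited explicitly as a factor, rather than appealing to a global statement about $\Psi(\S)$. Your approach is more conceptual and would yield a stronger general lemma if completed; the paper's approach is self-contained and avoids the analytic subtleties you identified. Your closing guess about what the Appendix does is incorrect.
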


Let us mention two partial cases of (\ref{1.2}). For $A_1=H$ and $A_2=L^{2m}H$, (\ref{1.2}) gives
\begin{equation}
\T\,[HL^{2m}H-(LH)^{2m}]=\T\,[L^{2m}-(LH)^{2m}].
                                      \label{1.3}
\end{equation}
For  $A_1=H$ and $A_2=H\prod_{\ell=1}^{2s}(L^{j_\ell}H)$ with $j_1+\dots+j_{2s}=2m$, (\ref{1.2}) gives
$$
\T\,\Big[\prod\limits_{\ell=1}^{2s}(L^{j_\ell}H)-(LH)^{2m}\Big]=\T\,\Big[\prod\limits_{\ell=1}^{2s}(HL^{j_\ell})-(LH)^{2m}\Big].
$$

\bigskip

For every $\ell\in \N$ satisfying $1\le \ell\le  m$, let $\Delta_\ell$ be the set of sequences $\delta=(\delta_1,\dots,\delta_\ell)$ of integers satisfying $1\leq \delta_i\leq m\ (1\leq i\leq\ell)$ and $\delta_1+\dots+\delta_\ell=m$.

Let $L$ be a self-adjoint pseudodifferential operator on $\S$ such that the commutator $[L,H]$ is a smoothing operator.
For $\delta=(\delta_1,\dots,\delta_\ell)\in \Delta_\ell$, we set
\begin{equation}
G_\delta=(L^{\delta_1}H)(L^{\delta_2}H)\dots(L^{\delta_\ell}H).
                               \label{1.6}
\end{equation}
The trace
$\T\,[G_\delta^*G_\delta-(LH)^{2m}]$ is real. Indeed, on using the trigonometric basis, we have
$$
\T\,[G_\delta^*G_\delta-(LH)^{2m}]=\sum\limits_{n\in{\mathbb Z}}\Big(\l G_\delta^*G_\delta e^{in\theta},e^{in\theta}\r
-\l(LH)^{2m} e^{in\theta},e^{in\theta}\r\Big).
$$
All summands of the series are real. Indeed, since $He^{in\theta}=\pm e^{in\theta}$,
$$
\begin{aligned}
\l G_\delta^*G_\delta e^{in\theta},e^{in\theta}\r-\l(LH)^{2m} e^{in\theta},e^{in\theta}\r
&=\|G_\delta e^{in\theta}\|^2-\l(LH)^{2m-1}LH e^{in\theta},e^{in\theta}\r\\
&=\|G_\delta e^{in\theta}\|^2\mp\l(LH)^{2m-1}L e^{in\theta},e^{in\theta}\r.
\end{aligned}
$$
The right-hand side is real because $(LH)^{2m-1}L$ is a self-adjoint operator.
Observe also that
$$
\T\,[G_\delta^*G_\delta-(LH)^{2m}]=\T\,[G_\delta G_\delta^*-(LH)^{2m}]
$$
by Lemma \ref{L1.1}.

Define the function $\varphi:\{1,\dots,m\}\rightarrow{\mathbb R}$ by
\begin{equation}
\varphi(\ell)
=\max_{\delta\in \Delta_\ell}\T\,[G_\delta^*G_\delta-(LH)^{2m}]\quad(1\leq\ell\leq m).
                                       \label{1.8}
\end{equation}

\begin{theorem} \label{Th1.1}
Let $L$ be a self-adjoint pseudodifferential operator on $\S$ such that the commutator $[L,H]$ is a smoothing operator and let the function $\varphi$ be defined by (\ref{1.8}).
Then $\varphi$ is a non-increasing and non-negative function. In particular,
\begin{equation}
\T\,[L^{2m}-(LH)^{2m}]= \varphi(1)\ge \varphi(m)=\T\,\big[H(LH)^{m-1}L^2H(LH)^{m-1}-(LH)^{2m}\big]\ge 0.
                                                \label{1.9}
\end{equation}
\end{theorem}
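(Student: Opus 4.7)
The plan is to establish the theorem in three steps: identify $\varphi(1)$ and $\varphi(m)$ explicitly, verify $\varphi(m)\ge 0$, and prove the monotonicity $\varphi(\ell)\ge\varphi(\ell+1)$. Since $\Delta_1=\{(m)\}$ and $\Delta_m=\{(1,\ldots,1)\}$, the maxima in (\ref{1.8}) at the two endpoints are attained on a single summand. For $\ell=1$, $G_{(m)}=L^mH$ gives $G_{(m)}^*G_{(m)}=HL^{2m}H$, so (\ref{1.3}) yields $\varphi(1)=\T[L^{2m}-(LH)^{2m}]$. For $\ell=m$, $G_{(1,\ldots,1)}=(LH)^m$ gives $(HL)^m(LH)^m$, and collapsing the central pair $HL\cdot LH=HL^2H$ via $H^2=I$ and then unwinding rewrites this as $H(LH)^{m-1}L^2H(LH)^{m-1}$, matching the expression in (\ref{1.9}).

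To show $\varphi(m)\ge 0$, I set $A=(LH)^m$, so $A^*=(HL)^m$ and $A^*-A$ is skew-adjoint. A direct expansion yields the operator identity
\begin{equation*}
(A^*-A)^*(A^*-A)=A^*A+AA^*-A^2-A^{*2}\ge 0.
\end{equation*}
The remark made just before the theorem states that $\T[G_\delta^*G_\delta-(LH)^{2m}]=\T[G_\delta G_\delta^*-(LH)^{2m}]$ for every $\delta$; specializing to $\delta=(1,\ldots,1)$ and adding the two equal expressions for $\varphi(m)$ gives
\begin{equation*}
2\varphi(m)=\T[(A^*-A)^*(A^*-A)]+\T[A^{*2}-A^2].
\end{equation*}
An application of Lemma \ref{L1.1} with $A_1=H$ and $A_2=L(HL)^{2m-1}$ (for which $A_1A_2=(HL)^{2m}$ and $A_2A_1=(LH)^{2m}$) yields $\T[(HL)^{2m}-(LH)^{2m}]=\T[(LH)^{2m}-(HL)^{2m}]$, forcing $\T[A^{*2}-A^2]=0$. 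Therefore $2\varphi(m)=\T[(A^*-A)^*(A^*-A)]=\sum_{n\in\Z}\|(A^*-A)e^{in\theta}\|^2\ge 0$.

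For the monotonicity, given $\delta'\in\Delta_{\ell+1}$ I form $\delta\in\Delta_\ell$ by merging two adjacent entries at positions $i$ and $i+1$. Setting $X=L^{\delta'_1}H\cdots L^{\delta'_{i-1}}HL^{\delta'_i}$ and $Y=L^{\delta'_{i+1}}HL^{\delta'_{i+2}}H\cdots L^{\delta'_{\ell+1}}H$, one has $G_\delta=XY$ and $G_{\delta'}=XHY$. Introducing the spectral projections $P_\pm=(I\pm H)/2$, the pair decomposes as $G_\delta=XP_+Y+XP_-Y$ and $G_{\delta'}=XP_+Y-XP_-Y$. Expanding $G_\delta^*G_\delta-G_{\delta'}^*G_{\delta'}$, invoking the authors' remark for both $\delta$ and $\delta'$, and cyclically rearranging via Lemma \ref{L1.1}, I would then try to rewrite the resulting trace as $\T[V^*V]$ for a suitable smoothing operator $V$ built from $X$, $Y$, and $P_-$, exploiting that $P_-(YY^*)P_+$ and $P_-(X^*X)P_+$ are smoothing.

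The monotonicity step is the main obstacle. In the $\varphi(m)$ argument the decisive simplification was $A^2=(LH)^{2m}$, which made the correction $\T[A^{*2}-A^2]$ vanish by a closed cyclic trace identity. For a generic merged pair $(\delta,\delta')$ the factors $X$ and $Y$ have no such structure, so the analogous reduction does not come for free, and the polarization above gives $\T[G_\delta^*G_\delta-G_{\delta'}^*G_{\delta'}]=2\T[W_+^*W_-+W_-^*W_+]$ with $W_\pm=XP_\pm Y$, a quantity whose sign is not manifest. The heart of the proof is to combine this polarization with a careful application of Lemma \ref{L1.1}, so that the residual cross-terms assemble into a manifestly non-negative trace analogous to the $\|(A^*-A)e^{in\theta}\|^2$ in Step~2.
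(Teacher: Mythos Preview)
Your identification of $\varphi(1)$ and $\varphi(m)$ is correct and matches the paper. Your argument for $\varphi(m)\ge 0$ is a clean repackaging of the paper's: the paper applies Cauchy--Schwarz directly to each pair $\l HG_{\omega_m}e^{in\theta},G_{\omega_m}He^{in\theta}\r$, while you collect the same information into the single identity $2\varphi(m)=\T[(A^*-A)^*(A^*-A)]$ with $A=(LH)^m$. Both arguments rely on $A^*-A=(HL)^m-(LH)^m$ being smoothing (Lemma~\ref{L1.0}) and on the trace identity $\T[(HL)^{2m}-(LH)^{2m}]=0$ coming from Lemma~\ref{L1.1}.

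The monotonicity step, however, is genuinely incomplete, and the polarization route you outline is not how the paper proceeds---and there is a structural reason your approach stalls. You attempt to compare $\T[G_\delta^*G_\delta-(LH)^{2m}]$ with $\T[G_{\delta'}^*G_{\delta'}-(LH)^{2m}]$ for a \emph{specific} merged pair $(\delta,\delta')$. As you observe, the cross-terms $\T[W_+^*W_-+W_-^*W_+]$ have no manifest sign, and in fact such a pointwise comparison need not hold. The paper never proves $\T[G_\delta^*G_\delta-(LH)^{2m}]\ge\T[G_{\delta'}^*G_{\delta'}-(LH)^{2m}]$ for any individual pair.

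Instead, the paper exploits the \emph{maximum} in the definition of $\varphi$ through a bootstrap. Take $\delta=(\delta_1,\ldots,\delta_\ell)\in\Delta_\ell$ a maximizer for $\varphi(\ell)$; after a cyclic rearrangement (Lemma~\ref{L1.1}) one may assume $\delta_1\ge\delta_\ell$. Write $\varphi(\ell)=\T[AB-CH]$ with
\[
A=L^{2\delta_\ell}H\Big(\prod_{i=2}^{\ell-1}L^{\delta_{\ell-i+1}}H\Big)L^{\delta_1-\delta_\ell},\quad
B=L^{\delta_1+\delta_\ell}H\prod_{i=2}^{\ell-1}L^{\delta_i}H,\quad C=(LH)^{2m-1}L.
\]
Now apply the Schwarz-type inequality of Lemma~\ref{L1.3},
\[
\Re\,\T[AB-CH]\le\tfrac12\T[AA^*-CH]+\tfrac12\T[B^*B-CH],
\]
and observe that $B=G_\eta$ for some $\eta\in\Delta_{\ell-1}$ (the entries $\delta_1$ and $\delta_\ell$ have been merged), while $A^*H=G_{\eta'}$ for some $\eta'\in\Delta_\ell$ (or $\Delta_{\ell-1}$ when $\delta_1=\delta_\ell$). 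Hence the right-hand side is bounded by $\tfrac12\varphi(\ell)+\tfrac12\varphi(\ell-1)$, and rearranging gives $\varphi(\ell)\le\varphi(\ell-1)$. The two ingredients you are missing are precisely Lemma~\ref{L1.3} and the use of the definition of $\varphi$ as a maximum to absorb one of the two resulting terms back into $\varphi(\ell)$.
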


To prove the theorem, we need the following

\begin{lemma} \label{L1.3}
Let $A$, $B$, and $C$ be pseudodifferential operators on $\S$ such that $C$ is self-adjoint and $AA^*-CH$, $B^*B-CH$, and $AB-CH$ are finite trace operators. Then
\begin{equation}
\Re\, \big(\T\,[AB-CH]\big)\le {1\over 2}\T\,[AA^*-CH]+{1\over 2}\T\,[B^*B-CH].
                              \label{1.10}
\end{equation}
The equality on {\rm (\ref{1.10})} holds if and only if $A^*=B$.
\end{lemma}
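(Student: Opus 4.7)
The plan is to exploit the non-negativity of the operator $(A-B^*)(A^*-B)=(A-B^*)(A-B^*)^*$. Expanding and using $(CH)^*=HC$ (from $H^*=H$ and $C^*=C$), one verifies the algebraic identity
\begin{equation*}
(A-B^*)(A^*-B) = (AA^* - CH) + (B^*B - CH) - (AB - CH) - (B^*A^* - HC) - [H, C],
\end{equation*}
in which the first three parenthesized terms on the right are trace class by hypothesis and the fourth is trace class as the adjoint of $AB-CH$.

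To finish I must show that $[H,C]$ is itself trace class with trace zero; this is where the mild subtlety lies, since no direct commutator assumption on $C$ is made. Trace-classness follows for free: $AA^* - HC$ is the adjoint of the hypothesized trace-class operator $AA^* - CH$ (using self-adjointness of $AA^*$), hence $(AA^* - CH) - (AA^* - HC) = HC - CH$ is trace class. For vanishing of the trace, I would evaluate it in the orthonormal basis $\{e^{in\theta}\}_{n \in \Z}$ of eigenvectors of $H$: since $He^{in\theta} = \epsilon_n e^{in\theta}$ with $\epsilon_n \in \{\pm 1\}$, one has $\l HCe^{in\theta}, e^{in\theta}\r = \epsilon_n \l Ce^{in\theta}, e^{in\theta}\r = \l CHe^{in\theta}, e^{in\theta}\r$, so every diagonal entry of $HC - CH$ is zero.

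Taking traces on both sides of the displayed identity and using $\T[(AB - CH)^*] = \overline{\T[AB - CH]}$ together with $\T[(A-B^*)(A^*-B)] \ge 0$ (the operator $UU^*$ is non-negative) gives
\begin{equation*}
0 \le \T[AA^* - CH] + \T[B^*B - CH] - 2\,\Re\,\T[AB - CH],
\end{equation*}
which is precisely \eqref{1.10}. For the equality case, a non-negative trace-class operator of the form $UU^*$ has zero trace if and only if $U=0$; therefore equality in \eqref{1.10} is equivalent to $A - B^* = 0$, i.e.\ $A^* = B$, and conversely this substitution trivially makes both sides equal to $\T[B^*B - CH]$. The main obstacle is precisely the trace-classness of $[H,C]$; once that is secured as above, the rest is the standard Cauchy--Schwarz-type argument.
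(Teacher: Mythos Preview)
Your proof is correct. Both your argument and the paper's are Cauchy--Schwarz-type, but organized differently. The paper works termwise in the trigonometric basis from the outset: it writes $\l(AB-CH)e^{in\theta},e^{in\theta}\r=\l Be^{in\theta},A^*e^{in\theta}\r-\l CHe^{in\theta},e^{in\theta}\r$, bounds the real part of the first term by $\tfrac12\|A^*e^{in\theta}\|^2+\tfrac12\|Be^{in\theta}\|^2$ via Cauchy--Schwarz and AM--GM, observes the second term is real (since $He^{in\theta}=\pm e^{in\theta}$ and $C$ is self-adjoint), and sums over $n$. You instead expand $(A-B^*)(A^*-B)\ge 0$ at the operator level and invoke the trigonometric basis only to show $\T[H,C]=0$ --- which is exactly the same observation the paper uses termwise. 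The content is equivalent; your packaging makes the equality case cleaner (trace of a non-negative trace-class $UU^*$ vanishes iff $U=0$, versus the paper's analysis of when both Cauchy--Schwarz and AM--GM are equalities), at the small cost of having to isolate and dispose of the commutator $[H,C]$ explicitly.
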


\begin{proof}
For $n\in \Z$,
$$
\l(AB-CH)e^{in\theta},e^{in\theta}\r=\l Be^{in\theta},A^*e^{in\theta}\r-\l CHe^{in\theta},e^{in\theta}\r.
$$
By the Schwartz inequality,
\begin{equation}
\Re\,\big( \l Be^{in\theta}, A^*e^{in\theta}\r\big)\le \|Be^{in\theta}\|\,\|A^*e^{in\theta}\|
\le{1\over 2}\|A^*e^{in\theta}\|^2+{1\over 2}\|Be^{in\theta }\|^2.
                              \label{1.11}
\end{equation}
Taking into account that $\l CHe^{in\theta},e^{in\theta}\r=\pm \l Ce^{in\theta},e^{in\theta}\r$ is real, we derive from two last formulas
$$
\begin{aligned}
\Re\big(\l(AB-CH)e^{in\theta},&e^{in\theta}\r\big)\\&\le {1\over 2}\big(\|A^*e^{in\theta}\|^2-\l CHe^{in\theta},e^{in\theta}\r\big)
+\frac{1}{2}\big(\|Be^{in\theta}\|^2-\l CHe^{in\theta},e^{in\theta}\r\big)\\
&= {1\over 2}\big\langle (AA^*-CH)e^{in\theta},e^{in\theta}\big\rangle
+\frac{1}{2}\big\langle(B^*B-CH)e^{in\theta},e^{in\theta}\big\rangle.
\end{aligned}
$$
Summing these inequalities over $n\in \Z$, we arrive to \eqref{1.10}.

Equality in \eqref{1.10} holds if and only if both inequalities on \eqref{1.11} become equalities for every $n\in \Z$, i.e., if and only if $A^*e^{in\theta}=Be^{in\theta}$ for any $n\in \Z$. Hence equality in \eqref{1.10} holds if and only if $A^*=B$.
\end{proof}

\begin{proof}[Proof of Theorem \ref{Th1.1}.]
Define $\omega_\ell\in\Delta_\ell\ (1\leq\ell\leq m)$ by
\begin{equation}
\omega_\ell=(m-\ell+1,\underbrace{1,\dots,1}_{\ell-1}).
                              \label{1.12}
\end{equation}
In particular, $\omega_1=(m)$ and $\omega_m=(\underbrace{1,\dots,1}_{m})$.

Obviously, $G_{\omega_1}=L^mH$ and
$$
\varphi(1)=\T\,[G_{\omega_1}^*G_{\omega_1}-(LH)^{2m}]=\T\,[HL^{2m}H-(LH)^{2m}].
$$
With the help of \eqref{1.3}, this gives $\varphi(1)=\T\,[L^{2m}-(LH)^{2m}]$. We have thus proved the first equality on \eqref{1.9}.

Obviously,
\begin{equation}
G_{\omega_m}=(LH)^m
                                                 \label{1.12a}
\end{equation}
and
\begin{equation}
\begin{aligned}
\varphi(m)=\T\,\big[G_{\omega_m}^*G_{\omega_m}-(LH)^{2m}\big]&=\T\,\big[(HL)^m(LH)^m-(LH)^{2m}\big]\\&=\T\,\big[H(LH)^{m-1}L^2H(LH)^{m-1}-(LH)^{2m}\big].
\end{aligned}
                                                 \label{1.13}
\end{equation}
This proves the last equality on \eqref{1.9}.

Next, we prove that $\varphi(m)\ge 0$. By \eqref{1.13},
\begin{equation}
\varphi(m)=\T\,\big[G_{\omega_m}^*G_{\omega_m}-(LH)^{2m}\big]
=\sum\limits_{n\in \Z}\big(\|G_{\omega_m} e^{in\theta}\|^2-\l (LH)^{2m}e^{in\theta} ,e^{in\theta}\r\big).
                                                 \label{1.14}
\end{equation}
For every $n\in \Z$, we obtain with the help of \eqref{1.12a}
$$
\l (LH)^{2m}e^{in\theta} ,e^{in\theta}\r=
\l G_{\omega_m}^2e^{in\theta} ,e^{in\theta}\r
=\l G_{\omega_m}e^{in\theta} ,G_{\omega_m}^*e^{in\theta}\r.
$$
As is seen from \eqref{1.12a}, $G_{\omega_m}^*=HG_{\omega_m}H$ and the previous formula takes the form
$$
\l (LH)^{2m}e^{in\theta} ,e^{in\theta}\r=
\l G_{\omega_m}e^{in\theta} ,HG_{\omega_m}He^{in\theta}\r.
$$
This implies with the help of the Schwartz inequality
\begin{equation}
\l (LH)^{2m}e^{in\theta} ,e^{in\theta}\r=
\l HG_{\omega_m}e^{in\theta} ,G_{\omega_m}He^{in\theta}\r\leq\|HG_{\omega_m}e^{in\theta}\|\,\|G_{\omega_m}He^{in\theta}\|.
                                                 \label{1.15}
\end{equation}
Since $He^{in\theta}=\pm e^{in\theta}$ and $H$ is a unitary operator, \eqref{1.15} is equivalent to
$$
\l (LH)^{2m}e^{in\theta} ,e^{in\theta}\r\leq\|G_{\omega_m}e^{in\theta}\|^2.
$$
Combining this with \eqref{1.14}, we obtain $\varphi(m)\ge 0$.

Finally, we prove that $\varphi(\ell)\le \varphi(\ell-1)$ for $2\le \ell\le m$.
Let $\delta=(\delta_1,\ldots, \delta_\ell)\in \Delta_\ell$ be such that
$$
\varphi(\ell)=\T\,[G_\delta^*G_\delta-(LH)^{2m}].
$$
This can be rewritten as
$$
\varphi(\ell)=\T\,\big[(HL^{\delta_\ell})\dots (HL^{\delta_2})(HL^{2\delta_1})H(L^{\delta_2}H)\dots (L^{\delta_\ell}H)-(LH)^{2m}\big].
$$
By Lemma \ref{L1.1}, the last factor $L^{\delta_\ell}H$ of the first product can be moved to the first position. Applying also $H^2=I$, we obtain
\begin{equation}
\varphi(\ell)=\T\,\Big[L^{2\delta_\ell}H\prod\limits_{i=2}^{\ell-1}(L^{\delta_{\ell-i+1}}H)L^{2\delta_1}H
\prod\limits_{i=2}^{\ell-1}(L^{\delta_i}H)
-(LH)^{2m}\Big].
                                                 \label{1.16}
\end{equation}
Again by Lemma \ref{L1.1}, this can be equivalently written as
$$
\varphi(\ell)=\T\,\Big[L^{2\delta_1}H\prod\limits_{i=2}^{\ell-1}(L^{\delta_i}H)L^{2\delta_\ell}\prod\limits_{i=2}^{\ell-1}(L^{\delta_{l-i+1}}H)-(LH)^{2m}\Big].
$$
Therefore we can assume without loss of generality that $\delta_1\ge \delta_\ell$ in (\ref{1.16}) (the product $\Pi_{i=2}^{\ell-1}$ does not appear in (\ref{1.16}) when $\ell=2$).

We write (\ref{1.16}) in the form
\begin{equation}
\varphi(\ell)=\T\,[AB-CH],
                                                 \label{1.17}
\end{equation}
where
\begin{equation}
A=L^{2\delta_\ell}H\prod\limits_{i=2}^{\ell-1}(L^{\delta_{\ell-i+1}}H)L^{\delta_1-\delta_\ell},\quad
B=L^{\delta_1+\delta_\ell}H\prod\limits_{i=2}^{\ell-1}(L^{\delta_i}H),\quad C=(LH)^{2m-1}L.
                                                 \label{1.18}
\end{equation}
The operators $A,B,C$ satisfy hypotheses of Lemma \ref{L1.3} and inequality (\ref{1.10}) holds. From (\ref{1.17}) and (\ref{1.10}),
$$
\varphi(\ell)\le {1\over 2}\T\,[AA^*-CH]+{1\over 2}\T\,[B^*B-CH].
$$
With the help of Lemma \ref{L1.1}, this can be written in the form
\begin{equation}
\begin{aligned}
\varphi(\ell)&\le {1\over 2}\T\,[HAA^*H-CH]+{1\over 2}\T\,[B^*B-CH]\\
&={1\over 2}\T\,[(A^*H)^*(A^*H)-CH]+{1\over 2}\T\,[B^*B-CH].
\end{aligned}
                                                 \label{1.19}
\end{equation}

Assume first that $\delta_1>\delta_\ell$. Comparing (\ref{1.6}) and (\ref{1.18}), we see that
\begin{equation}
A^*H=G_{(\delta_1-\delta_\ell,\delta_2,\dots,\delta_{\ell-1},2\delta_\ell)},\quad B=G_{(\delta_1+\delta_\ell,\delta_2,\dots,\delta_{\ell-1})},\quad CH=(LH)^{2m}.
                                                 \label{1.20}
\end{equation}
Now, (\ref{1.19}) takes the form
\begin{equation}
\begin{aligned}
\varphi(\ell)&\le{1\over 2}\T\,\big[G_{(\delta_1-\delta_\ell,\delta_2,\dots,\delta_{\ell-1},2\delta_\ell)}^*G_{(\delta_1-\delta_\ell,\delta_2,\dots,\delta_{\ell-1},2\delta_\ell)}-(LH)^{2m}\big]\\
&+{1\over 2}\T\,\big[G_{(\delta_1+\delta_\ell,\delta_2,\dots,\delta_{\ell-1})}^*G_{(\delta_1+\delta_\ell,\delta_2,\dots,\delta_{\ell-1})}-(LH)^{2m}\big].
\end{aligned}
                                                 \label{1.21}
\end{equation}
Here, $G_{(\delta_1-\delta_\ell,\delta_2,\dots,\delta_{\ell-1},2\delta_\ell)}$ and $G_{(\delta_1+\delta_\ell,\delta_2,\dots,\delta_{\ell-1})}$ should be replaced by
$G_{(\delta_1-\delta_2,2\delta_2)}$ and $G_{(\delta_1+\delta_2)}$ respectively in the case of $l=2$. By the definition of the function $\varphi$,
$$
\begin{aligned}
&\T\,\big[G_{(\delta_1-\delta_\ell,\delta_2,\dots,\delta_{\ell-1},2\delta_\ell)}^*G_{(\delta_1-\delta_\ell,\delta_2,\dots,\delta_{\ell-1},2\delta_\ell)}-(LH)^{2m}\big]\leq\varphi(\ell),\\
&\T\,\big[G_{(\delta_1+\delta_\ell,\delta_2,\dots,\delta_{\ell-1})}^*G_{(\delta_1+\delta_\ell,\delta_2,\dots,\delta_{\ell-1})}-(LH)^{2m}\big]\leq\varphi(\ell-1).
\end{aligned}
$$
Therefore (\ref{1.21}) implies the desired inequality $\varphi(\ell)\leq\varphi(\ell-1)$.

Finally, we consider the case of $\delta_1=\delta_\ell$. In this case, we have instead of (\ref{1.20})
$$
A^*H=HG_{(\delta_2,\dots,\delta_{\ell-1},2\delta_\ell)},\quad B=G_{(\delta_1+\delta_\ell,\delta_2,\dots,\delta_{\ell-1})},\quad CH=(LH)^{2m}
$$
and (\ref{1.19}) takes the form
\begin{equation}
\begin{aligned}
\varphi(\ell)&\le{1\over 2}\T\,\big[G_{(\delta_2,\dots,\delta_{\ell-1},2\delta_\ell)}^*G_{(\delta_2,\dots,\delta_{\ell-1},2\delta_\ell)}-(LH)^{2m}\big]\\
&+{1\over 2}\T\,\big[G_{(\delta_1+\delta_\ell,\delta_2,\dots,\delta_{\ell-1})}^*G_{(\delta_1+\delta_\ell,\delta_2,\dots,\delta_{\ell-1})}-(LH)^{2m}\big].
\end{aligned}
                                                 \label{1.22}
\end{equation}
Here $G_{(\delta_2,\dots,\delta_{\ell-1},2\delta_\ell)}$ and $G_{(\delta_1+\delta_\ell,\delta_2,\dots,\delta_{\ell-1})}$ should be replaced by
$G_{(2\delta_\ell)}$ and $G_{(\delta_1+\delta_\ell)}$ respectively in the case of $l=2$.
By the definition of the function $\varphi$,
$$
\begin{aligned}
&\T\,\big[G_{(\delta_2,\dots,\delta_{\ell-1},2\delta_\ell)}^*G_{(\delta_2,\dots,\delta_{\ell-1},2\delta_\ell)}-(LH)^{2m}\big]\leq\varphi(\ell-1),\\
&\T\,\big[G_{(\delta_1+\delta_\ell,\delta_2,\dots,\delta_{\ell-1})}^*G_{(\delta_1+\delta_\ell,\delta_2,\dots,\delta_{\ell-1})}-(LH)^{2m}\big]\leq\varphi(\ell-1).
\end{aligned}
$$
Therefore (\ref{1.22}) again implies the desired inequality $\varphi(\ell)\leq\varphi(\ell-1)$.
\end{proof}

\bigskip

We have the following characterization of the equality in Theorem \ref{Th1.1}.

\begin{theorem} \label{Th1.2}
Let $L$ be a self-adjoint pseudodifferential operator on $\S$ such that $[L,H]$ is a smoothing operator. The equality
\begin{equation}
\T[L^{2m}-(LH)^{2m}]=0
                                           \label{1.23}
\end{equation}
holds if and only if
\begin{equation}
(LH)^m=(HL)^m\quad\textrm{and}\quad L^2H=HL^2.
                                     \label{1.24}
\end{equation}
\end{theorem}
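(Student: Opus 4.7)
\emph{Plan.} The proof has two directions.

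\textbf{Direction $(\Leftarrow)$.} Assuming both identities in \eqref{1.24}, I establish the stronger operator identity $(LH)^{2m}=L^{2m}$. First, by induction on $k$, the identity $(HL)^k(LH)^k=L^{2k}$ holds for all $k\ge 0$: the base $k=1$ reads $HL^2H=L^2$, which is equivalent to $L^2H=HL^2$; the inductive step is
\[
(HL)^{k+1}(LH)^{k+1}=HL\cdot(HL)^k(LH)^k\cdot LH=HL\cdot L^{2k}\cdot LH=HL^{2k+2}H=L^{2k+2},
\]
the last equality using that $L^{2k+2}$ also commutes with $H$. Combining with $(LH)^m=(HL)^m$:
\[
(LH)^{2m}=(LH)^m(HL)^m=L(HL)^{m-1}H\cdot H(LH)^{m-1}L=L\cdot L^{2(m-1)}\cdot L=L^{2m}.
\]

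\textbf{Direction $(\Rightarrow)$.} By Theorem \ref{Th1.1}, the chain $\varphi(1)\ge\varphi(2)\ge\dots\ge\varphi(m)\ge 0$ collapses, so every $\varphi(\ell)=0$. To extract $(LH)^m=(HL)^m$ I revisit the proof of $\varphi(m)\ge 0$: the estimate \eqref{1.15} is Cauchy--Schwarz applied termwise in the Fourier basis, and since $\varphi(m)=0$ each summand vanishes, so the equality case of Cauchy--Schwarz forces $HG_{\omega_m}e^{in\theta}=G_{\omega_m}He^{in\theta}$ for every $n$. Thus $HG_{\omega_m}=G_{\omega_m}H$, and the general identity $H(LH)^k=(HL)^kH$ yields $(LH)^m=(HL)^m$.

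To obtain $L^2H=HL^2$ I use the equality case of Lemma \ref{L1.3} within the step $\varphi(\ell)\le\varphi(\ell-1)$ for $\ell=2$. Fix $\delta=(\delta_1,\delta_2)\in\Delta_2$ attaining $\varphi(2)=0$. In the bound \eqref{1.19} both $\T[AA^*-CH]$ and $\T[B^*B-CH]$ are of the form $\T[G_{\delta'}^*G_{\delta'}-(LH)^{2m}]\le\varphi(\ell')=0$, yet their half-sum majorizes $\T[AB-CH]=\T[G_\delta^*G_\delta-(LH)^{2m}]=0$; therefore both traces vanish and Lemma \ref{L1.3}'s equality case $A^*=B$ holds. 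A direct calculation from \eqref{1.18} shows that $A^*=B$ is equivalent to
\[
L^{\delta_1-\delta_2}\cdot[L^{2\delta_2},H]=0
\]
(which reads $[L^{2\delta_2},H]=0$ in the case $\delta_1=\delta_2$). For $m=2$ the only element of $\Delta_2$ is $(1,1)$, which falls in the $\delta_1=\delta_2$ case and yields $[L^2,H]=0$ at once. For $m\ge 3$, the identity so obtained (e.g.\ $L^{m-2}\cdot[L^2,H]=0$ when $\delta=(m-1,1)$) must be combined with the parallel identities produced at $\ell=3,\ldots,m$ by Lemma \ref{L1.3}'s equality case; the main technical obstacle is carrying out this coordinated extraction in order to sharpen the one-sided annihilation into the exact identity $[L^2,H]=0$.
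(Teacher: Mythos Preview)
Your $(\Leftarrow)$ direction is correct and in fact cleaner than the paper's: you prove the operator identity $(LH)^{2m}=L^{2m}$ outright, whereas the paper goes through Lemma~\ref{L1.4} and an inductive argument on the traces $\T[G_{\omega_\ell}^*G_{\omega_\ell}-(LH)^{2m}]$. Your extraction of $(LH)^m=(HL)^m$ from $\varphi(m)=0$ via the equality case of Cauchy--Schwarz is also correct and matches the paper.

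The $(\Rightarrow)$ direction, however, is genuinely incomplete for $m\ge 3$, and you yourself flag the obstacle. Two issues remain unresolved. First, you fix a $\delta\in\Delta_2$ attaining $\varphi(2)=0$, but you have no control over which $\delta$ this is; the identity $A^*=B$ you extract is $L^{\delta_1-\delta_2}[L^{2\delta_2},H]=0$ for that particular $\delta$, which need not be $(m-1,1)$. Second, even granting $L^{m-2}[L^2,H]=0$, you still need a cancellation principle to strip the prefactor $L^{m-2}$, and you have not supplied one.

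The paper closes both gaps. It does not work at $\ell=2$ with an unknown maximizer; instead it proves by downward induction in $\ell$ (Lemma~\ref{L1.4}) that the \emph{specific} sequences $\omega_\ell=(m-\ell+1,1,\dots,1)$ satisfy $\T[G_{\omega_\ell}^*G_{\omega_\ell}-(LH)^{2m}]=0$, and along the way the equality case of Lemma~\ref{L1.3} at each step yields the family of identities \eqref{1.26}. Taking $\ell=m-1$ in \eqref{1.26} gives $L^mH=L^{m-2}HL^2$, i.e.\ $L^{m-2}(L^2H-HL^2)=0$. The cancellation is then Lemma~\ref{L1.5}, which rests on the elementary fact (Lemma~\ref{L1.6}) that for self-adjoint $A$, $A^kB=0$ implies $AB=0$; applying this with $A=L$ gives $L(L^2H-HL^2)=0$, and a short range/kernel argument upgrades this to $L^2H=HL^2$. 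These two ingredients---the controlled inductive descent on $\omega_\ell$ and the self-adjoint cancellation lemma---are precisely what your sketch is missing.
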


The proof of the theorem is  based on the following

\begin{lemma} \label{L1.4}
Let $L$ be a self-adjoint pseudodifferential operator on $\S$ such that the commutator $[L,H]$ is smoothing. Equality (\ref{1.23})
holds if and only if
\begin{equation}
(LH)^m=(HL)^m
                                    \label{1.25}
\end{equation}
and
\begin{equation}
L^{\ell+1}H(LH)^{m-\ell-1}=L^{\ell-1}H(LH)^{m-\ell-1}L^2\quad\textrm{for}\quad 1\le \ell\le m-1.
                                    \label{1.26}
\end{equation}
Condition (\ref{1.26}) is absent in the case of $m=1$.
\end{lemma}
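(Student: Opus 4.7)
The plan is to prove Lemma \ref{L1.4} by a careful analysis of the equality cases inside the proof of Theorem \ref{Th1.1}, tracking which operator identities are enforced by (and sufficient for) equality both in Lemma \ref{L1.3} and in the Schwarz bound (1.15). The whole argument will pivot on the family $\omega_\ell=(m-\ell+1,1,\ldots,1)\in\Delta_\ell$. A direct calculation from (1.18) shows that for $\delta=\omega_\ell$ with $2\le\ell\le m-1$ one gets $A=L^2H(LH)^{\ell-2}L^{m-\ell}$ and $B=L^{m-\ell+2}H(LH)^{\ell-2}$, whence $A^*=L^{m-\ell}H(LH)^{\ell-2}L^2$, and the identity $A^*=B$ is literally (1.26) with $\ell$ replaced by $\ell'=m-\ell+1$. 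For the boundary $\ell=m$ one has $A=B=L^2H(LH)^{m-2}$, and $A^*=B$ is the self-adjointness of this single operator, which is (1.26) with $\ell'=1$. Two auxiliary facts are also noted: by inspection $B=G_{\omega_{\ell-1}}$, and whenever $A^*=B$ one automatically has $AA^*=AB=B^*B$, so the two terms on the right-hand side of (1.10) coincide.

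\textbf{Forward direction.} Assume $\T[L^{2m}-(LH)^{2m}]=0$. Monotonicity and non-negativity in Theorem \ref{Th1.1} force $\varphi(\ell)=0$ for all $1\le\ell\le m$. The case $\ell=m$ makes the Schwarz bound in (1.15) an equality at every Fourier mode $e^{in\theta}$, hence $HG_{\omega_m}=G_{\omega_m}H$; using $(HL)^mH=H(LH)^m$ this is equivalent to (1.25). For (1.26), I apply the proof of $\varphi(\ell)\le\varphi(\ell-1)$ to $\delta=\omega_\ell$: once $\omega_\ell$ is known to be a maximizer, the chain
\[ 0=\varphi(\ell)=\T[AB-CH]\le \tfrac12\T[AA^*-CH]+\tfrac12\T[B^*B-CH]\le\tfrac12\varphi(\ell)+\tfrac12\varphi(\ell-1)=0 \]
collapses to equalities throughout, and Lemma \ref{L1.3} forces $A^*=B$, i.e., (1.26) with $\ell'=m-\ell+1$. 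The fact that each $\omega_\ell$ is itself a maximizer (achieving trace zero) is established by a secondary downward induction on $\ell$: the equality $\T[G_{\omega_{\ell+1}}^*G_{\omega_{\ell+1}}-(LH)^{2m}]=0$ forces $T_1=T_2=0$ in the right-hand side of Lemma \ref{L1.3} applied to $\omega_{\ell+1}$, and $T_2=\T[G_{\omega_\ell}^*G_{\omega_\ell}-(LH)^{2m}]$ because $B=G_{\omega_\ell}$ at that step. Letting $\ell$ descend from $m$ to $2$ produces (1.26) for $\ell'=1,\ldots,m-1$.

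\textbf{Backward direction.} Assume (1.25) and (1.26). The computation (1.14)-(1.15), read in reverse with $HG_{\omega_m}=G_{\omega_m}H$ (equivalent to (1.25)), gives $\T[G_{\omega_m}^*G_{\omega_m}-(LH)^{2m}]=0$. For each $\ell\in\{m,\ldots,2\}$ the identity (1.26) with $\ell'=m-\ell+1$ supplies $A^*=B$ for the $\omega_\ell$-based operators, whence $AA^*=B^*B$ and the equality case of Lemma \ref{L1.3} yields
\[ \T[G_{\omega_\ell}^*G_{\omega_\ell}-(LH)^{2m}]=\T[B^*B-CH]=\T[G_{\omega_{\ell-1}}^*G_{\omega_{\ell-1}}-(LH)^{2m}], \]
where the last equality uses $B=G_{\omega_{\ell-1}}$. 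Iterating chains these traces together, and at the bottom $\T[G_{\omega_1}^*G_{\omega_1}-(LH)^{2m}]=\T[HL^{2m}H-(LH)^{2m}]=\T[L^{2m}-(LH)^{2m}]$ by (1.3), producing (1.23).

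\textbf{Main obstacle.} The most delicate point is the bookkeeping to verify that the three sequences appearing at each inductive step, $\delta=\omega_\ell$, the companion $\delta'=(m-\ell,1,\ldots,1,2)\in\Delta_\ell$ (attached to $A^*H$ via (1.20)), and $\delta''=\omega_{\ell-1}\in\Delta_{\ell-1}$ (attached to $B$), align so that the recursion closes, together with the separate handling of the boundary case $\ell=m$ where one must invoke (1.22) in place of (1.21) because $\delta_1=\delta_\ell$ there. In particular one must check directly from (1.18) that the identity $B=G_{\omega_{\ell-1}}$ holds in both sub-cases, so that the trace $T_2$ always points to the next level of the induction; this is what lets $T_1$ be discarded via $AA^*=B^*B$ without needing to identify $T_1$ with anything of the form $\T[G_\delta^*G_\delta-(LH)^{2m}]$ for a canonical $\delta$.
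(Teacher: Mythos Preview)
Your proposal is correct and follows essentially the same route as the paper's own proof: both arguments use Theorem~\ref{Th1.1} to reduce to $\varphi(\ell)=0$ for all $\ell$, extract \eqref{1.25} from the Schwarz equality case in \eqref{1.15}, and then run a downward induction along the family $\omega_\ell$ in which the equality case of Lemma~\ref{L1.3} yields $A^*=B$, i.e.\ \eqref{1.26}. The only cosmetic difference is that you plug $\delta=\omega_\ell$ directly into \eqref{1.18} for every $\ell$ (including $\ell=m$), whereas the paper writes out the step from $\omega_m$ to $\omega_{m-1}$ separately; the operators involved are identical, and your observation that $B=G_{\omega_{\ell-1}}$ is exactly what the paper records as \eqref{1.34a}.
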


\begin{proof}
By Theorem \ref{Th1.1}, \eqref{1.23} holds if and only if
\begin{equation}
\varphi(\ell)=0\quad\mbox{for}\quad 1\le\ell\leq m.
                                         \label{1.27}
\end{equation}

As is seen from \eqref{1.14} and \eqref{1.15}, the equality  $\varphi(m)=0$ is equivalent to the following statement: the equality
$$
\l H G_{\omega_m}e^{in\theta}, G_{\omega_m}He^{in\theta}\r
=\|HG_{\omega_m}e^{in\theta}\|\,\|G_{\omega_m}He^{in\theta}\|
$$
holds for every $n\in \Z$. Besides this, the norms $\|HG_{\omega_m}e^{in\theta}\|$ and $\|G_{\omega_m}He^{in\theta}\|$ coincide because $He^{in\theta}=\pm e^{in\theta}$ and $H$ is a unitary operator. Therefore $\varphi(m)=0$ if and only if
$H G_{\omega_m}e^{in\theta}= G_{\omega_m}He^{in\theta}\ (n\in \Z)$, i.e., if and only if
$$
HG_{\omega_m}=G_{\omega_m}H.
$$
On using \eqref{1.12a} and $H^2=I$, we see that the latter equality is equivalent to \eqref{1.25}.
In particular, we have proved the theorem in the case of $m=1$.

Assume $m\ge 2$ for the rest of the proof. Replacing $\ell$ with $m-\ell$, we rewrite \eqref{1.26} in the equivalent form
\begin{equation}
L^{m-\ell+1}H(LH)^{\ell-1}=L^{m-\ell-1}H(LH)^{\ell-1}L^2\quad\textrm{for}\quad 1\le \ell\le m-1.
                                    \label{1.27a}
\end{equation}

We first prove the ``only if'' statement. Assume \eqref{1.25} to be valid.
Let $\omega_\ell\in\Delta_\ell$ be defined by \eqref{1.12}.
By induction in $m-\ell$, we will prove the validity of \eqref{1.27a} and of the equality
\begin{equation}
0=\varphi(\ell)=\T[G_{\omega_\ell}^*G_{\omega_\ell}-(LH)^{2m}]\quad\mbox{for}\quad 1\le l\le m.
                                                    \label{1.28}
\end{equation}
For $\ell=m$, \eqref{1.28} holds by \eqref{1.14} and \eqref{1.27}.

Now, we prove (\ref{1.27a}) and (\ref{1.28}) for $l=m-1$. By \eqref{1.9},
$$
\varphi(m)=\T\,\big[H(LH)^{m-1}L^2H(LH)^{m-1}-(LH)^{2m}\big].
$$
By Lemma \ref{L1.1}, we can transpose the factors $H(LH)^{m-1}$ and $L^2H(LH)^{m-1}$ on the right-hand side
$$
\begin{aligned}
\varphi(m)&=\T\,\big[L^2H(LH)^{m-1}H(LH)^{m-1}-(LH)^{2m}\big]\\
&=\T\,\big[L^2H(LH)^{m-2}L^2H(LH)^{m-2}-(LH)^{2m}\big].
\end{aligned}
$$
This can be written in the form
$$
\varphi(m)=\T\,[A^2-(LH)^{2m}],
$$
where
\begin{equation}
A=L^2H(LH)^{m-2}=G_{\omega_{m-1}}.
                                       \label{1.31}
\end{equation}
The latter equality follows from (\ref{1.6}) and (\ref{1.12}).
Operators $A=B$ and $C=(LH)^{2m-1}L$ satisfy hypotheses of Lemma \ref{L1.3}. This is checked quite similarly to the corresponding check after formula \eqref{1.18}. Applying Lemma \ref{L1.3}, we obtain
\begin{equation}
\begin{aligned}
0=\varphi(m)&\le{1\over 2}\T\,\big[G_{\omega_{m-1}}^*G_{\omega_{m-1}}-(LH)^{2m}\big]+{1\over 2}\T\,\big[G_{\omega_{m-1}}G_{\omega_{m-1}}^*-(LH)^{2m}\big]\\
&=\T\big[G_{\omega_{m-1}}^*G_{\omega_{m-1}}-(LH)^{2m}\big]
\le \varphi(m-1)=0.
\end{aligned}
                                       \label{1.32}
\end{equation}
Thus, we have the equality in Lemma \ref{L1.3} which provides the following equality
$$
L^2H(LH)^{m-2}=A=A^*=H(LH)^{m-2}L^2.
$$
Hence \eqref{1.27a}--\eqref{1.28} holds for $\ell=m-1$.

Now, we are doing the induction step. Assume  \eqref{1.27a}--\eqref{1.28} to be valid for $\ell=s$ with some $s$ satisfying
$2\le s\le m-1$. We are going to prove \eqref{1.27a}--\eqref{1.28} for $\ell=s-1$. To this end we set
\begin{equation}
A=L^2H(LH)^{s-2}L^{m-s},\quad B=L^{m-s+2}H(LH)^{s-2},\quad C=(LH)^{2m-1}L
                                         \label{1.33a}
\end{equation}
and apply Lemma \ref{L1.3}
\begin{equation}
\T\,[AB-(LH)^{2m}]\le {1\over 2}\T\,[AA^*-(LH)^{2m}]+{1\over 2}\T\,[B^*B-(LH)^{2m}].
                              \label{1.34}
\end{equation}
On using definitions \eqref{1.6}, \eqref{1.12} and Lemma \ref{L1.1}, we easily derive from \eqref{1.33a}
\begin{equation}
\begin{aligned}
\T\,[AB-(LH)^{2m}]&=\T\big[G_{\omega_s}^*G_{\omega_s}-(LH)^{2m}\big],\\
\T\,[AA^*-(LH)^{2m}]&=
\T\big[G_{(m-s,\underbrace{\scriptstyle 1,\dots,1}_{s-2},2)}^*G_{(m-s,\underbrace{\scriptstyle 1,\dots,1}_{s-2},2)}-(LH)^{2m}\big],\\
\T\,[B^*B-(LH)^{2m}]&=\T\big[G_{\omega_{s-1}}^*G_{\omega_{s-1}}-(LH)^{2m}\big].
\end{aligned}
                              \label{1.34a}
\end{equation}
By the induction hypothesis,
$$
\T\,[AB-(LH)^{2m}]=\T\big[G_{\omega_s}^*G_{\omega_s}-(LH)^{2m}\big]=0
$$
and, by the definition of $\varphi$,
$$
\begin{aligned}
\T\,[AA^*&-(LH)^{2m}]+\T\,[B^*B-(LH)^{2m}]\\
&=\T\big[G_{(m-s,\underbrace{\scriptstyle 1,\dots,1}_{s-2},2)}^*G_{(m-s,\underbrace{\scriptstyle 1,\dots,1}_{s-2},2)}-(LH)^{2m}\big]
+\T\big[G_{\omega_{s-1}}^*G_{\omega_{s-1}}-(LH)^{2m}\big]\\
&\leq\varphi(s)+\varphi(s-1)=0.
\end{aligned}
$$
Thus, we have actually the equality in \eqref{1.34}. By Lemma \ref{L1.3}, this means that
$$
L^{m-s}H(LH)^{s-2}L^2=A^*=B=L^{m-s+2}H(LH)^{s-2}.
$$
This proves \eqref{1.27a} for $\ell=s-1$. Two traces on the right-hand side of \eqref{1.34} coincide because $A^*=B$. In other words, two traces on the left-hand side of the equality
$$
\T\big[G_{(m-s,\underbrace{\scriptstyle 1,\dots,1}_{s-2},2)}^*G_{(m-s,\underbrace{\scriptstyle 1,\dots,1}_{s-2},2)}-(LH)^{2m}\big]
+\T\big[G_{\omega_{s-1}}^*G_{\omega_{s-1}}-(LH)^{2m}\big]=0
$$
coincide. Hence,
$$
\T\big[G_{\omega_{s-1}}^*G_{\omega_{s-1}}-(LH)^{2m}\big]=0.
$$
This proves \eqref{1.28} for $\ell=s-1$. The induction step is completed.

\bigskip

Now, we prove the ``if'' statement. Assume \eqref{1.25}--\eqref{1.26} to be valid. We are going to prove by induction in $m-\ell$ that
\begin{equation}
\T[G_{\omega_\ell}^*G_{\omega_\ell}-(LH)^{2m}]=0\quad\mbox{for}\quad 1\le l\le m.
                                                    \label{1.35}
\end{equation}
For $\ell=1$, this gives \eqref{1.23} in virtue of \eqref{1.3} and of $G_{\omega_1}=L^{m}H$.

As we have shown at the beginning of the proof, \eqref{1.25} holds if and only if $\varphi(m)=0$. This implies the validity of \eqref{1.35} for $\ell=m$ (see \eqref{1.13}).

Let the operator $A$ be defined by \eqref{1.31}. Setting $\ell=1$ in \eqref{1.26}, we see that $A=A^*$. With the help of Lemma \ref{L1.3}, this implies that the first inequality on \eqref{1.32} is actually the equality, i.e.,
$$
\T\big[G_{\omega_{m-1}}^*G_{\omega_{m-1}}-(LH)^{2m}\big]=\varphi(m)=0.
$$
This proves \eqref{1.35} for $\ell=m-1$.

Now, we are doing the induction step. Assume  \eqref{1.35} to be valid for $\ell=s$ with some $s$ satisfying
$2\le s\le m-1$. Define operators $A$ and $B$ by \eqref{1.33a}.
Setting $\ell=m-s+1$ in \eqref{1.26}, we see that $A^*=B$. Therefore the equality holds in \eqref{1.34} and two traces on the right-hand side coincide, i.e.,
$$
\T\,[AB-(LH)^{2m}]=\T\,[B^*B-(LH)^{2m}].
$$
This gives with the help of \eqref{1.34a}
$$
\T\big[G_{\omega_{s}}^*G_{\omega_{s}}-(LH)^{2m}\big]=\T\big[G_{\omega_{s-1}}^*G_{\omega_{s-1}}-(LH)^{2m}\big].
$$
By the induction hypothesis, the left-hand side is equal to zero. This finishes the induction step.
\end{proof}

\begin{proof}[Proof of Theorem \ref{Th1.2}.]
By Lemma \ref{L1.4}, it suffices to prove that conditions \eqref{1.24} are equivalent to \eqref{1.25}--\eqref{1.26}.

Assume first \eqref{1.24} to be valid. The second of conditions \eqref{1.24} means that the operator $L^2$ commutes with $H$. Hence $L^2$ commutes with every operator that can be written as a polynomial in $L$ and $H$. Therefore
$$
L^{\ell+1}H(LH)^{m-\ell-1}=L^2(L^{\ell-1}H(LH)^{m-\ell-1})=L^{\ell-1}H(LH)^{m-\ell-1}L^2
$$
and \eqref{1.26} holds.

Now, assume \eqref{1.25}--\eqref{1.26} to be valid. The first of conditions \eqref{1.24} holds and it remains to prove that $L^2$ commutes with $H$. We can also assume $m\geq2$. Setting $\ell=m-1$ in \eqref{1.26}, we have
\begin{equation}
L^mH=L^{m-2}HL^2
                                                    \label{1.36}
\end{equation}
and the proof is finished by applying the following statement.
\end{proof}

\begin{lemma} \label{L1.5}
Let $L$ be a self-adjoint pseudodifferential operator on $\S$. For an integer $m\ge 2$, \eqref{1.36}
is equivalent to $L^2H=HL^2$.
\end{lemma}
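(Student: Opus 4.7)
The ``if'' direction is immediate by left-multiplying $L^2H=HL^2$ by $L^{m-2}$. For the converse, my plan is to rewrite \eqref{1.36} as $L^{m-2}K=0$, where $K := [L^2, H] = L^2H - HL^2$, and show $K=0$. The case $m=2$ is trivial since then $L^{m-2}=I$; assume $m\ge 3$. The crucial structural input is that $L$ and $H$ being self-adjoint forces $K$ to be skew-adjoint, $K^*=-K$, as a formal adjoint on $C^\infty(\S)$.

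The argument splits into three steps. First, I would upgrade $L^{m-2}K=0$ to the stronger relation $LK=0$. This rests on the classical fact that $\ker L^k=\ker L$ for a self-adjoint operator, which I would prove here by the descent identity $\|L^{k-1}v\|^2=\langle L^k v,\,L^{k-2}v\rangle$ (valid for $k\ge 2$); iterating from $k=m-2$ down to $k=2$ gives $L^{m-2}v=0\Rightarrow Lv=0$, and applying this with $v=Ku$ yields $LK=0$. Second, I would show that $K$ vanishes on $\ker L$: for $u\in\ker L$ one has $L^2 u=0$, so $Ku=L^2 Hu$; since $Ku\in\ker L$ by Step~1, $L^3 Hu=L(L^2 Hu)=LKu=0$, and the same descent forces $LHu=0$, whence $L^2 Hu=0=Ku$. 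Combining this with the inclusion $\mathrm{Im}(K)\subseteq\ker L$ from Step~1, I get $K^2=0$: any $Ku$ lies in $\ker L$, and $K$ annihilates $\ker L$.

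Finally, skew-adjointness closes the argument: for every $u\in C^\infty(\S)$,
\[
\|Ku\|^2 = \langle K^*Ku,\,u\rangle = -\langle K^2 u,\,u\rangle = 0,
\]
so $K=0$, i.e., $L^2 H=HL^2$. The main obstacle is Step~1: the implication $L^{m-2}K=0\Rightarrow LK=0$ is the one place where self-adjointness of $L$ is used essentially, and it is what lets the argument ``strip off'' the extra powers of $L$ one at a time. All manipulations take place on $C^\infty(\S)$, where $L$, $H$, and their products are well-defined, so no domain subtleties arise.
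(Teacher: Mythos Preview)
Your proof is correct and shares the paper's core idea: both rewrite \eqref{1.36} as $L^{m-2}K=0$ with $K=[L^2,H]$ and then use the self-adjointness descent (which the paper isolates as Lemma~\ref{L1.6}) to obtain $LK=0$. The finishing arguments differ slightly. The paper takes adjoints to get also $KL=0$, decomposes an arbitrary $u$ into its $\overline{\mathrm{Ran}(L)}$ and $\ker L$ components, and computes $\|Ku\|^2$ directly via $\langle LHu_2,\,LKu\rangle=0$. You instead show $K$ vanishes on $\ker L$, combine this with $\mathrm{Im}(K)\subseteq\ker L$ to obtain $K^2=0$, and then invoke skew-adjointness of $K$ to conclude $K=0$. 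Your route is a touch more algebraic and sidesteps the closure-of-range issue; the paper's is a direct norm computation. Both are short once $LK=0$ is in hand.
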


To prove Lemma \ref{L1.5}, we need the following known statement. Unfortunately, we do not know any reference where the statement is presented explicitly.
Therefore we present the proof too. The statement is formulated for operators on the circle although it is valid for operators on a compact manifold.

\begin{lemma} \label{L1.6}
Let $A$ and $B$ be two pseudodifferential operators on $\S$. Assume $A$ to be a self-adjoint operator.
If $A^kB=0$ for some integer $k\geq1$, then $AB=0$.
\end{lemma}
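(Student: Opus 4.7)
The plan is to reduce to the case $k=2$ via a short induction; the essential point is that formal self-adjointness of $A$ converts the vanishing of $A^{2}B$ into the vanishing of $AB$ by an inner-product computation.

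For the base case $k=2$, fix an arbitrary $u\in C^{\infty}(\S)$ and use that $A$ is formally self-adjoint on $C^{\infty}(\S)$ with respect to the $L^{2}$-pairing to write
$$
\|ABu\|^{2}=\langle ABu,ABu\rangle=\langle A^{2}Bu,Bu\rangle=0.
$$
Since $ABu$ is a smooth function with vanishing $L^{2}$-norm, it vanishes identically, whence $AB=0$.

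For the inductive step, assume the assertion is proved for all exponents smaller than $k$, and suppose $A^{k}B=0$ with $k\geq 3$. If $k$ is odd, composing on the left with $A$ yields $A^{k+1}B=0$, so we may assume $k=2j$ is even with $j\geq 2$. Since $(A^{j})^{*}=A^{j}$ by formal self-adjointness of $A$, for every $u\in C^{\infty}(\S)$ we have
$$
\|A^{j}Bu\|^{2}=\langle A^{j}Bu,A^{j}Bu\rangle=\langle A^{2j}Bu,Bu\rangle=0,
$$
so $A^{j}B=0$ with $j<k$, and the induction hypothesis gives $AB=0$.

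I do not anticipate any real obstacle: the whole argument is algebraic and uses only that $A,B$ preserve $C^{\infty}(\S)$ and that $A$ satisfies $\langle Af,g\rangle=\langle f,Ag\rangle$ on smooth functions, so the inner products and norms are well defined. The only care needed is to ensure the intermediate quantities stay in $C^{\infty}(\S)\subset L^{2}(\S)$, which is automatic because pseudodifferential operators map $C^{\infty}(\S)$ to itself.
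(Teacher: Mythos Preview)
Your proof is correct and rests on the same core identity as the paper's, namely $\|A^{j}Bu\|^{2}=\langle A^{2j}Bu,Bu\rangle$ for a self-adjoint $A$. The only difference is organizational: the paper runs a simple induction from $k$ to $k+1$, first specializing the induction hypothesis to $B=I$ (so that $A^{k+1}Bu=A^{k}(ABu)=0$ forces $A(ABu)=0$, i.e.\ $A^{2}Bu=0$), and then applies the norm identity once; you instead run a descent/strong induction, bumping an odd $k$ up to $k+1$ and then halving via the norm identity. Both reach $AB=0$ in the same way. One small remark on exposition: when you write ``so we may assume $k=2j$ is even'' after replacing an odd $k$ by $k+1$, the induction hypothesis you invoke is for exponents below the \emph{original} $k$; since $(k+1)/2<k$ for $k\geq 3$ this is fine, but it would be cleaner to say so explicitly.
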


\begin{proof}
By induction in $k$, we prove the more general statement:
\begin{equation}
\mbox{if}\quad A^kBu=0\quad\mbox{for}\quad u\in C^\infty(\S),\quad\mbox{then}\quad ABu=0.
                                                   \label{1.37}
\end{equation}
The statement is trivially valid in the case of $k=1$. Assume the statement to be valid for some $k\geq1$ and for every function $u\in C^\infty(\S)$.
In particular, \eqref{1.37} holds for $B=I$, i.e.,
\begin{equation}
\mbox{if}\quad A^kv=0\quad\mbox{for}\quad v\in C^\infty(\S),\quad\mbox{then}\quad Av=0.
                                                   \label{1.38}
\end{equation}
Let now $A^{k+1}Bu=0$ for some $u\in C^\infty(\S)$. We write this in the form $A^{k}(ABu)=0$ and apply \eqref{1.38} to the function $v=ABu\in C^\infty(\S)$ to obtain $A^2Bu=0$. Then
$\|ABu\|^2=\l A^2Bu,Bu\r=0$ by the self-adjointness of $A$ which yields $ABu=0$.
\end{proof}

\begin{proof}[Proof of Lemma \ref{L1.5}.]
There is nothing to prove in the case of $m=2$. We assume $m\geq3$.

Rewrite \eqref{1.36} as
$$
L^{m-2}(L^2H-HL^2)=0.
$$
By Lemma \ref{L1.6},
\begin{equation}
L(L^2H-HL^2)=0
                                               \label{1.39}
\end{equation}
and hence
\begin{equation}
(L^2H-HL^2)L=-(L(L^2H-HL^2))^*=0.
                                               \label{1.40}
\end{equation}

Given a function $u \in C^\infty(\S)$, we consider the decomposition
$u=u_1+u_2$, where $u_1\in \overline{{\rm Ran}(L)}$ and $u_2\in \overline{{\rm Ran}(L)}^\bot={\rm Ker}(L)$ (by the self-adjointness of $L$) when we look at $L$ as an unbounded operator on $L^2(\S)$.
Then we have $(L^2H-HL^2)u_1=0$ by \eqref{1.40} and
$(L^2H-HL^2)u=L^2Hu_2$. Therefore
$$
\|(L^2H-HL^2)u\|^2=\l L^2Hu_2,(L^2H-HL^2)u\r=\l LHu_2,L(L^2H-HL^2)u\r=0
$$
by \eqref{1.39}. Hence $(L^2H-HL^2)u=0$.
\end{proof}

As the first consequence of Theorem \ref{Th1.2}, we have the following

\begin{corollary} \label{C1.1}
Let $L$ be a self-adjoint pseudodifferential operator on $\S$ such that the commutator $[L,H]$ is smoothing. If
$\T[L^{2m}-(LH)^{2m}]=0$, then $\T\big[L^{2pm}-(LH)^{2pm}\big]=0$ for every integer $p\ge1$.
\end{corollary}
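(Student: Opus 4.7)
The plan is to apply Theorem \ref{Th1.2} twice: once in the forward direction with the given integer $m$, and once in the reverse direction with $pm$ replacing $m$. The algebraic bridge between the two applications is completely elementary, so the only nontrivial content is already packaged in Theorem \ref{Th1.2}.

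First I would invoke Theorem \ref{Th1.2} to translate the hypothesis
$\T[L^{2m}-(LH)^{2m}]=0$ into the two operator identities
\begin{equation*}
(LH)^m=(HL)^m\quad\textrm{and}\quad L^2H=HL^2.
\end{equation*}
The second identity is exactly the second condition required by Theorem \ref{Th1.2} with $pm$ in place of $m$, so it can be carried over verbatim. For the first condition, I would compute
\begin{equation*}
(LH)^{pm}=\bigl((LH)^m\bigr)^p=\bigl((HL)^m\bigr)^p=(HL)^{pm},
\end{equation*}
where the middle equality uses $(LH)^m=(HL)^m$.

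Having verified both conditions \eqref{1.24} with $m$ replaced by $pm$, I would apply the reverse direction of Theorem \ref{Th1.2} to conclude
$\T\bigl[L^{2pm}-(LH)^{2pm}\bigr]=0$. The finite-trace property of the operator $L^{2pm}-(LH)^{2pm}$ needed here is automatic from Lemma \ref{L1.0}, since both $L^{2pm}$ and $(LH)^{2pm}$ are products of the form (\ref{1.1d}) with the same degree $2pm$ in $L$ and with degrees in $H$ both even (namely $0$ and $2pm$).

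There is no real obstacle in this argument once Theorem \ref{Th1.2} is available: the whole content of the corollary is the observation that the characterization \eqref{1.24} is stable under replacing $m$ by any positive multiple of itself, which is a one-line algebraic fact.
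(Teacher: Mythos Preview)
Your proof is correct and follows essentially the same approach as the paper's: apply Theorem \ref{Th1.2} forward to obtain $(LH)^m=(HL)^m$ and $L^2H=HL^2$, then verify \eqref{1.24} for $pm$ and apply Theorem \ref{Th1.2} backward. The paper phrases this as an induction on $p$ (going from $p$ to $p+1$), whereas you observe directly that $(LH)^{pm}=\bigl((LH)^m\bigr)^p=\bigl((HL)^m\bigr)^p=(HL)^{pm}$; this is a cosmetic difference only.
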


\begin{proof}
We argue by induction in $p$. Assume $\T[L^{2pm}-(LH)^{2pm}]=0$ for some $p\ge 1$.
By Theorem \ref{Th1.2} applied to both $m$ and $pm$, we have
$$
L^2H=HL^2,\quad (LH)^m=(HL)^m,\quad (LH)^{pm}=(HL)^{pm}.
$$
From this
$$
(LH)^{(p+1)m}=(LH)^{pm}(LH)^m=(HL)^{pm}(HL)^m=(HL)^{(p+1)m}.
$$
Applying Theorem \ref{Th1.2} again, we derive from the latter formula
$$
\T[L^{2(p+1)m}-(LH)^{2(p+1)m}]=0.
$$
\end{proof}

We give results on optimality of Corollary \ref{C1.1}.
For $\lambda\in (0,+\infty)$, let $L_\lambda$ be the self-adjoint finite rank operator defined by
\begin{equation}
L_\lambda e^{i\theta}=e^{i\theta}+\lambda e^{-i\theta},\quad
L_\lambda e^{-i\theta}=-e^{-i\theta}+\lambda e^{i\theta},\quad
L_\lambda e^{in\theta}=0\quad\textrm{for}\quad n\notin \{-1,1\}.
                                                           \label{1.41}
\end{equation}

\begin{lemma} \label{L1.7}
The equality
\begin{equation}
L_\lambda^2H=HL_\lambda^2
                          \label{1.45}
\end{equation}
holds for every $0<\lambda\in{\mathbb R}$. For an integer $j\ge2$, the equality
\begin{equation}
(L_\lambda H)^j=(HL_\lambda)^j,
                              \label{1.46}
\end{equation}
holds if and only if
$$
\Im((1+i\lambda)^j)=0.
$$
\end{lemma}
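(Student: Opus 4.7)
The strategy is to exploit the finite rank of $L_\lambda$ by restricting attention to the two-dimensional subspace $V = \textrm{span}\{e^{i\theta}, e^{-i\theta}\}$. First I would observe that $L_\lambda$ maps $V$ into itself and annihilates $V^\perp$, while $H$ preserves both $V$ and $V^\perp$ (it is diagonal in the Fourier basis). Consequently each of the operators $L_\lambda^2$, $L_\lambda H$, and $HL_\lambda$ vanishes on $V^\perp$ and preserves $V$, so checking the claimed identities on $L^2(\S)$ reduces to checking their restrictions to $V$; the same applies to all positive powers of these operators.

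For the first claim I would represent $L_\lambda|_V$ in the ordered basis $\{e^{i\theta}, e^{-i\theta}\}$ as the symmetric matrix $\left(\begin{smallmatrix} 1 & \lambda \\ \lambda & -1 \end{smallmatrix}\right)$ and note that its square equals $(1+\lambda^2) I_V$. Hence $L_\lambda^2 = (1+\lambda^2) P$, where $P$ is the orthogonal projection onto $V$. Since $P$ is a spectral projection of $H$ in the trigonometric basis, $PH = HP$, and \eqref{1.45} follows at once for every $\lambda > 0$.

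For the second claim, the same basis gives
\[
(L_\lambda H)|_V = I + \lambda J, \qquad (HL_\lambda)|_V = I - \lambda J, \qquad J = \begin{pmatrix} 0 & -1 \\ 1 & 0 \end{pmatrix},
\]
with $J^2 = -I$. I would then invoke the $\R$-algebra isomorphism $\R[J] \to \C$ sending $aI + bJ$ to $a + ib$: under this identification $(L_\lambda H)^j|_V$ corresponds to $(1+i\lambda)^j$ and $(HL_\lambda)^j|_V$ to $(1-i\lambda)^j = \overline{(1+i\lambda)^j}$. Equality of these two conjugate complex numbers is equivalent to the common value being real, i.e.\ to $\Im((1+i\lambda)^j) = 0$. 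Combined with the reduction of the first paragraph, this gives \eqref{1.46} exactly under the stated condition.

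I do not anticipate any serious obstacle; the only point deserving care is the vanishing of $L_\lambda H$ and $HL_\lambda$ on $V^\perp$, so that their $j$-th powers are entirely determined by their restrictions to $V$. This is immediate from $L_\lambda e^{in\theta} = 0$ and $H e^{in\theta} = \pm e^{in\theta}$ for $n \notin \{-1, 1\}$.
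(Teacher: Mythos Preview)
Your proof is correct and follows essentially the same approach as the paper: reduce to the two-dimensional invariant subspace $V=\mathrm{span}\{e^{i\theta},e^{-i\theta}\}$, observe that $L_\lambda^2|_V=(1+\lambda^2)I_V$, and represent $(L_\lambda H)|_V$ as $I+\lambda J$ with $J^2=-I$. The only difference is in the final computation: the paper expands $(I+\lambda J)^j$ binomially and checks when the off-diagonal entries vanish via the conjugation $(HL_\lambda)|_V=\sigma(L_\lambda H)|_V\,\sigma$ with $\sigma=\mathrm{diag}(1,-1)$, whereas you compute $(HL_\lambda)|_V=I-\lambda J$ directly and invoke the algebra isomorphism $\R[J]\cong\C$ to convert the matrix equality into $(1+i\lambda)^j=\overline{(1+i\lambda)^j}$ --- a slightly cleaner packaging of the same calculation.
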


\begin{proof}
Definition \eqref{1.41} implies that
$$
L_\lambda^2 e^{\pm i\theta} =(1+\lambda^2)e^{\pm i \theta},\quad
L_\lambda^2 e^{in\theta}=0\quad\textrm{for}\quad n\notin \{-1,1\}.
$$
Hence $L_\lambda^2$ commutes with $H$, i.e., \eqref{1.45} holds.

From \eqref{1.41} it follows that
$$
L_\lambda H e^{i\theta}=e^{i\theta}+\lambda e^{-i\theta},\quad
L_\lambda He^{-i\theta}=e^{-i\theta}-\lambda e^{i\theta},\quad
L_\lambda e^{in\theta}=0\quad\textrm{for}\quad n\notin \{-1,1\}.
$$
The plane  $P$ spanned by $\{e^{\pm i\theta}\}$ is an invariant subspace of the operator $L_\lambda H$ and the restriction of $L_\lambda H$ to $P^\bot$ is zero. The restriction $M_\lambda$ of $L_\lambda H$ to $P$ is expressed by the matrix
$$
M_\lambda=I+\lambda J,\quad
\textrm{where}\quad J=
\left(
   \begin{matrix}
            0&1\\
           -1&0
   \end{matrix}
\right)
$$
and $I$ is the unit $2\times2$-matrix.
Since $J^2=-I$,
\begin{equation}
M_\lambda^j=\sum_{\ell=0}^j{j\choose\ell}\lambda^\ell J^\ell
=\sum_{\ell=0}^{\lfloor {j/2}\rfloor}(-1)^\ell
\left(
   \begin{array}{cc}
            {j\choose {2\ell}}&{j\choose {2\ell+1}}\lambda\\[4pt]
           -{j\choose {2\ell+1}}\lambda&{j\choose {2l}}
   \end{array}
\right)\lambda^{2\ell},
                                       \label{1.48}
\end{equation}
where $\lfloor x\rfloor$ is the integer part of $x\geq0$ and the following agreement is used: ${j\choose {2\ell+1}}=0$ if $j<2\ell+1$.
Equality \eqref{1.46} is equivalent to
$$
M_\lambda^j=\left(
   \begin{matrix}
            1&0\\
           0&-1
   \end{matrix}
\right)M_\lambda^j\left(
   \begin{matrix}
            1&0\\
           0&-1
   \end{matrix}
\right).
$$
On using \eqref{1.48}, we see that \eqref{1.46} is equivalent to
$$
\sum_{\ell=0}^{\lfloor {j/2}\rfloor}(-1)^\ell{j\choose {2\ell+1}}\lambda^{2\ell+1}=0.
$$
Finally, we observe that the left-hand side of the last equality is $\Im((1+i\lambda)^j)$.
\end{proof}

The following statement demonstrates the optimality of Corollary \ref{C1.1}.

\begin{proposition}  \label{P1.1}
Given an integer $m\ge 2$, set $\lambda=\tan({\pi\over m})$. For an integer $j\ge 1$, the equality
$$
\T[L_\lambda^{2j}-(L_\lambda H)^{2j}]=0
$$
holds if and only if $j$ is an integer multiple of $m$.
\end{proposition}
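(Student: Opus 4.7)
The plan is to reduce the proposition to a straightforward combination of Theorem \ref{Th1.2} and Lemma \ref{L1.7}, followed by an explicit trigonometric computation. First I would verify the hypotheses of Theorem \ref{Th1.2} for $L=L_\lambda$: the operator $L_\lambda$ is self-adjoint by its defining formulas \eqref{1.41}, and since it is finite rank (with range contained in the smooth span of $e^{\pm i\theta}$) it is smoothing, hence $[L_\lambda,H]$ is smoothing as well. Applying Theorem \ref{Th1.2} with $m$ replaced by $j$ then gives that
\[
\T[L_\lambda^{2j}-(L_\lambda H)^{2j}]=0
\quad\Longleftrightarrow\quad
(L_\lambda H)^j=(HL_\lambda)^j\;\;\text{and}\;\;L_\lambda^2H=HL_\lambda^2.
\]

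Next I would invoke Lemma \ref{L1.7}: its first part tells us that $L_\lambda^2H=HL_\lambda^2$ holds for every $\lambda>0$ with no condition, so this half of the equivalence is automatic. Its second part reduces the remaining condition $(L_\lambda H)^j=(HL_\lambda)^j$ to the single scalar equation $\Im((1+i\lambda)^j)=0$. Thus the vanishing of $\T[L_\lambda^{2j}-(L_\lambda H)^{2j}]$ is equivalent to $\Im((1+i\lambda)^j)=0$ for every $j\ge 1$ (the case $j=1$ requires a moment's thought, but $(L_\lambda H)^1=(HL_\lambda)^1$ fails generically and $\Im(1+i\lambda)=\lambda\ne 0$, matching the statement since $m\ge 2$).

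The final step is the trigonometric identification that exploits the particular choice $\lambda=\tan(\pi/m)$. I would write
\[
1+i\lambda=1+i\tan(\pi/m)=\frac{\cos(\pi/m)+i\sin(\pi/m)}{\cos(\pi/m)}=\frac{e^{i\pi/m}}{\cos(\pi/m)},
\]
so that
\[
\Im\bigl((1+i\lambda)^j\bigr)=\frac{\sin(j\pi/m)}{\cos^j(\pi/m)}.
\]
Since $\cos(\pi/m)\ne 0$ for $m\ge 2$, this vanishes if and only if $\sin(j\pi/m)=0$, i.e., if and only if $j$ is an integer multiple of $m$, which is exactly the conclusion of the proposition.

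I do not anticipate a genuine obstacle: the real content has already been packaged in Theorem \ref{Th1.2} and Lemma \ref{L1.7}, and what remains is the clean identity $1+i\tan(\pi/m)=e^{i\pi/m}/\cos(\pi/m)$. The only subtlety worth checking is that Theorem \ref{Th1.2}, as stated for operators on $\S$, indeed applies to the finite rank operator $L_\lambda$; this is immediate because finite rank operators with smooth kernel are smoothing and thus pseudodifferential of order $-\infty$.
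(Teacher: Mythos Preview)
Your proof is correct and follows essentially the same route as the paper: apply Theorem~\ref{Th1.2} (with Lemma~\ref{L1.7} making the condition $L_\lambda^2H=HL_\lambda^2$ automatic), reduce via Lemma~\ref{L1.7} to the scalar equation $\Im((1+i\lambda)^j)=0$, and finish with the identity $1+i\tan(\pi/m)=e^{i\pi/m}/\cos(\pi/m)$. One small slip: your claim ``$\cos(\pi/m)\ne 0$ for $m\ge 2$'' fails at $m=2$, but then $\lambda=\tan(\pi/2)$ is itself undefined, so the proposition (and the paper's own proof, which also divides by $\cos(\pi/m)$) tacitly needs $m\ge 3$.
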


\begin{proof}
As is seen from
$$
\Im((1+i\lambda)^j)=\big(\cos{\pi\over m}\big)^{-j}\Im\big(e^{i{{j\pi}\over m}}\big),
$$
\eqref{1.46} holds if and only if
$j$ is an integer multiple of $m$.
Applying Lemma \ref{L1.7}, we obtain the statement:
$(L_\lambda H)^j=(HL_\lambda)^j$ if and only if $j$ is an integer multiple of $m$.
By Theorem \ref{Th1.2} and \eqref{1.45}, the latter statement is equivalent to the proposition.
\end{proof}

The following result sharpens  Corollary \ref{C1.1}.

\begin{theorem} \label{Th1.3}
Let $L$ be a self-adjoint pseudodifferential operator on $\S$ such that the commutator $[L,H]$ is smoothing.
Let $m$ be the greatest common factor of integers $m_1>1$ and $m_2>1$.
If
$$
\T[L^{2m_1}-(LH)^{2m_1}]=\T[L^{2m_2}-(LH)^{2m_2}]=0,
$$
then  $(LH)^m=(HL)^m$ and $\T[L^{2pm}-(LH)^{2pm}]=0$ for any integer $p\ge 1$.
\end{theorem}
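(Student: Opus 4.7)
The plan is to use Theorem \ref{Th1.2} to reformulate the two hypotheses as algebraic identities in $L$ and $H$, and then exploit the resulting normality of $U=LH$ to reduce the problem to showing that a certain skew-adjoint operator vanishes. Applying Theorem \ref{Th1.2} to both $m_1$ and $m_2$, I would first extract the three identities $L^2H=HL^2$, $(LH)^{m_1}=(HL)^{m_1}$ and $(LH)^{m_2}=(HL)^{m_2}$. Setting $U=LH$ and $V=HL$, we have $V=U^*$, and the first identity gives $UV=VU=L^2$, so $U$ is a normal operator. Because Theorem \ref{Th1.2} together with Corollary \ref{C1.1} reduces the whole conclusion to the single statement $(LH)^m=(HL)^m$, it is enough to prove $U^m=V^m$.

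Since $U$ and $V$ commute, the set $S=\{k\ge 1:U^k=V^k\}$ is a subsemigroup of $\N$ under addition and contains both $m_1$ and $m_2$. By the Sylvester--Frobenius theorem, $S$ therefore contains every sufficiently large multiple of $m=\gcd(m_1,m_2)$; in particular there exists an integer $N\ge 1$ for which both $W^N$ and $W^{N+1}$ are self-adjoint, where $W=U^m$. Using the normality $WW^*=W^*W$ to commute factors freely, the difference of the two identities $W^{N+1}=(W^*)^{N+1}$ and $W^N=(W^*)^N$ produces $(W-W^*)W^N=0$, and taking adjoints yields $(W^*)^N(W-W^*)=0$ as well.

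The main obstacle is the final descent from these two annihilation relations to the equality $W=W^*$. Write $X=W-W^*$; this operator is skew-adjoint. Multiplying $(W^*)^N X=0$ on the left by $W^N$ and using normality gives $(WW^*)^N X=0$, so Lemma \ref{L1.6} applied to the self-adjoint operator $WW^*$ reduces this to $WW^*X=0$. Then $\|W^*Xf\|^2=\l WW^*Xf,Xf\r=0$ for every $f$, hence $W^*X=0$. Unwinding $W^*X=0$ as $W^*W=(W^*)^2$ and taking its adjoint gives $W^2=W^*W=(W^*)^2$; combining this with $WW^*=W^*W$ then yields $X^2=W^2-2W^*W+(W^*)^2=0$. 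Finally, $X^*X=-X^2=0$ forces $X=0$, that is $W=W^*$, which is exactly $(LH)^m=(HL)^m$.
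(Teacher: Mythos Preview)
Your argument is correct and takes a genuinely different route from the paper's.

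Both proofs begin identically: apply Theorem~\ref{Th1.2} at $m_1$ and $m_2$ to extract $L^2H=HL^2$ and $(LH)^{m_i}=(HL)^{m_i}$, and observe (via Theorem~\ref{Th1.2} plus Corollary~\ref{C1.1}) that everything reduces to proving $(LH)^m=(HL)^m$. From here the paths diverge. The paper writes a single B\'ezout relation $am_1+bm_2=m$ with $a>0$, $b<0$, computes $(LH)^{am_1}$ two ways to obtain $(LH)^{-bm_2}\big((LH)^m-(HL)^m\big)=0$, and then invokes the separately proved Lemma~\ref{L1.8} to cancel the prefactor. You instead exploit the normality of $U=LH$ (which follows from $UV=VU=L^2$) and use the numerical-semigroup fact to find two \emph{consecutive} multiples $mN,\,m(N+1)$ in $S=\{k:U^k=(U^*)^k\}$; from $W^{N+1}=(W^*)^{N+1}$ and $W^N=(W^*)^N$ you get $(W-W^*)W^N=0$, and then a short chain through Lemma~\ref{L1.6} and $X^2=0$ finishes. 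Your route bypasses Lemma~\ref{L1.8} entirely, replacing its inductive descent by a cleaner normal-operator computation; the only extra ingredient is the semigroup observation, and in fact B\'ezout already suffices for that (with $am_1=(-b)m_2+m$ one may take $mN=(-b)m_2$ and $m(N+1)=am_1$), so Sylvester--Frobenius is not really needed.
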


Together with inequality \eqref{1.9}, this implies

\begin{corollary} \label{C1.2}
Let $L$ be a self-adjoint pseudodifferential operator on $\S$ such that the commutator $[L,H]$ is smoothing.
Either $\T[L^{2p}-(LH)^{2p}]>0$ for every $p\in \N$ or there exists an integer $m\ge1$ such that, for every $k\in\N$,
$\T[L^{2k}-(LH)^{2k}]=0$ if and only if $k$ is an integer multiple of $m.$
\end{corollary}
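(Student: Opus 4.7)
My plan is to define $S=\{k\in\N:\T[L^{2k}-(LH)^{2k}]=0\}$ and argue a dichotomy: either $S=\emptyset$ (which is exactly the first alternative, given that Theorem \ref{Th1.1} already ensures $\T[L^{2p}-(LH)^{2p}]\geq 0$), or $S$ is the set of positive multiples of a single integer $m\geq 1$.

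First I would observe that if $S\neq\emptyset$, then it has a least element $m=\min S$. By Corollary \ref{C1.1}, every positive integer multiple of $m$ lies in $S$, so $m\N\subseteq S$. The heart of the proof is the reverse inclusion $S\subseteq m\N$. For this, take any $k\in S$. If $m=1$, the inclusion is trivial and $S=\N$. If $m\geq 2$, then $k\geq m\geq 2$; the case $k=m$ is trivial, and otherwise $k>m\geq 2$, so both $k$ and $m$ exceed $1$ and Theorem \ref{Th1.3} applies to the pair $(m_1,m_2)=(k,m)$. Setting $d=\gcd(k,m)$, Theorem \ref{Th1.3} yields $\T[L^{2pd}-(LH)^{2pd}]=0$ for every $p\geq 1$, and in particular (for $p=1$) $d\in S$. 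Since $d\leq m=\min S$, necessarily $d=m$, i.e.\ $m\mid k$. Thus $S\subseteq m\N$, and combined with $m\N\subseteq S$ we conclude $S=m\N$, which is precisely the second alternative.

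The only potential subtlety is the hypothesis $m_1>1,m_2>1$ in Theorem \ref{Th1.3}; this is the reason I split off the trivial cases $m=1$ and $k=m$ before invoking it. The nontrivial application only occurs when $k>m\geq 2$, in which case both arguments to Theorem \ref{Th1.3} are at least $2$. No further estimates or manipulations of pseudodifferential operators are needed at this stage — the whole corollary is a purely arithmetic consequence of Theorem \ref{Th1.1} (positivity), Corollary \ref{C1.1} (closure under multiples), and Theorem \ref{Th1.3} (closure under greatest common divisors, in the presence of a second zero).
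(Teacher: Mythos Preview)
Your argument is correct and is precisely the standard way to flesh out what the paper leaves implicit: the paper merely says that the corollary follows from Theorem~\ref{Th1.3} ``together with inequality~\eqref{1.9}'', and your proof is exactly that deduction---nonnegativity from Theorem~\ref{Th1.1}, closure under multiples from Corollary~\ref{C1.1}, and the $\gcd$ step from Theorem~\ref{Th1.3}, with the case split $m=1$ versus $m\ge 2$ handling the hypothesis $m_1,m_2>1$ cleanly.
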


\begin{proof}[Proof of Theorem \ref{Th1.3}]
If $m_2$ is an integer multiple of $m_1$ or $m_1$ is an integer multiple of $m_2$, the statement follows from Corollary \ref{C1.1}.
For the rest of the proof we assume that $m<\min(m_1,m_2)$.
Choose integers $a$ and $b$  such that $am_1+bm_2=m$. We can assume without loss of generality that $a>0$ and $b<0$ (otherwise invert roles of $m_1$ and $m_2$).

By \eqref{1.24}, $L^2$ commutes with $H$ and the following equalities hold:
$$
(LH)^{m_1}=(HL)^{m_1},\quad (LH)^{m_2}=(HL)^{m_2}.
$$
On using these equalities and $am_1=-bm_2+m$, we derive
$$
\begin{aligned}
(LH)^{am_1}&=(LH)^{-bm_2+m}=(LH)^{-bm_2}(LH)^m,\\
(LH)^{am_1}&=(HL)^{am_1}=(HL)^{-bm_2+m}=(HL)^{-bm_2}(HL)^m=(LH)^{-bm_2}(HL)^m.
\end{aligned}
$$
This implies
$$
(LH)^{-bm_2}\big((LH)^m-(HL)^m\big)=0.
$$
On using Lemma \ref{L1.8} that is presented below, we obtain $(LH)^m=(HL)^m$. Then the statement of the theorem follows from Corollary \ref{C1.1}.
\end{proof}

\begin{lemma}   \label{L1.8}
Let $L$ be a self-adjoint pseudodifferential operator on $\S$ such that $L^2$ commutes with $H$.
If $(LH)^k\big((LH)^m-(HL)^m\big)=0$ for some positive integers $k$ and $m$, then $(LH)^m=(HL)^m$.
\end{lemma}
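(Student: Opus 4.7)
The plan is to exploit the fact that the hypothesis $L^2H=HL^2$ makes $T:=LH$ a normal operator: indeed, $TT^*=(LH)(HL)=LH^2L=L^2$ and $T^*T=(HL)(LH)=HL^2H=L^2$, where the last equality uses $L^2H=HL^2$ together with $H^2=I$. Writing $R:=T^m-(T^*)^m=(LH)^m-(HL)^m$, the hypothesis reads $T^kR=0$.

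First, I would multiply $T^kR=0$ on the left by $(T^*)^k$ and use normality to obtain $(T^*T)^kR=L^{2k}R=0$. Applying Lemma \ref{L1.6} to the self-adjoint operator $L$ (with $B=R$ and exponent $2k$) yields $LR=0$, so that the range of $R$ is contained in $\ker L$.

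Next, I would show that $R$ annihilates $\ker L$. If $Lu=0$, then $(HL)u=H(Lu)=0$, so $(HL)^mu=0$. For the other summand,
$$
\|(LH)u\|^2=\langle (HL)(LH)u,u\rangle=\langle L^2u,u\rangle=\|Lu\|^2=0,
$$
so $(LH)u=0$ and hence $(LH)^mu=0$. Therefore $Ru=0$ for every $u\in\ker L$. Combining with the previous step, for every $u\in C^\infty(\S)$ the vector $Ru$ lies in $\ker L$, where $R$ vanishes, so $R^2u=R(Ru)=0$. Finally, the identity $R^*=((LH)^m)^*-((HL)^m)^*=(HL)^m-(LH)^m=-R$ gives $RR^*=-R^2=0$. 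Hence $\|R^*u\|^2=\langle RR^*u,u\rangle=0$ for every $u$, so $R^*=0$ and therefore $R=0$, i.e.\ $(LH)^m=(HL)^m$.

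The main obstacle is the gap between $L^{2k}R=0$ and $R=0$: Lemma \ref{L1.6} drives the exponent of $L$ down to $1$, not to $0$ (and indeed it cannot, since $L$ may have nontrivial kernel). Bridging this gap requires both that the range of $R$ lies in $\ker L$ and that $R$ vanishes on $\ker L$; the latter rests essentially on the normality identity $\|Tu\|=\|Lu\|$, which in turn uses $L^2H=HL^2$.
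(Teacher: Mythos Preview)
Your proof is correct and arrives at the same intermediate point as the paper---namely $LR=0$ with $R=(LH)^m-(HL)^m$---but by a cleaner route and with a markedly simpler endgame. The key observation that $T=LH$ is normal (since $TT^*=L^2=T^*T$ under the hypothesis $L^2H=HL^2$) lets you pass from $T^kR=0$ directly to $L^{2k}R=0$ via $(T^*)^kT^k=(T^*T)^k=L^{2k}$, and then Lemma~\ref{L1.6} gives $LR=0$. The paper instead reproves a tailored version of this reduction, statement~\eqref{1.52}, by an inductive computation exploiting $\|LHLv\|^2=\|L^2v\|^2$.

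The second half is where your argument genuinely diverges. The paper, after obtaining $LR=0$ (its \eqref{1.54}), expands $\|Ru\|^2$ and splits $u=u_1+u_2$ along $\overline{\mathrm{Ran}(L)}\oplus\ker L$, then uses a limiting argument with a sequence $Lu_{1,n}\to u_1$ to kill the remaining term. You instead observe two things: (i) $R$ annihilates $\ker L$ (a direct consequence of the normality identity $\|LHu\|=\|Lu\|$), so the range containment $\mathrm{Ran}(R)\subset\ker L$ forces $R^2=0$; and (ii) $R^*=-R$, so $RR^*=-R^2=0$ and hence $R=0$. This avoids the closure/density step entirely and is shorter. Both arguments rest on the same structural facts ($L^2H=HL^2$, $H^2=I$, self-adjointness of $L$), but yours packages them through the normality of $LH$ and the skew-adjointness of $R$, which makes the logic more transparent.
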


\begin{proof}
We first prove by induction in $\ell$ the following statement: for every $u\in C^\infty(\S)$,
\begin{equation}
\textrm{if}\quad(LH)^\ell Lu=0\quad\textrm{for some}\quad \ell\in \N,\quad\textrm{then}\quad Lu=0.
                                              \label{1.52}
\end{equation}
There is nothing to prove when $l=0$.
Assume \eqref{1.52} to be valid for some $l\ge0$ and assume that
$(LH)^{\ell+1}Lv=(LH)^\ell(LHLv)=0$ for some $v\in C^\infty(\S)$. Applying the induction hypothesis to the function $u=HLv$, we obtain
$$
LHLv=0.
$$
On using the self-adjointness of $L$ and permutability of $L^2$ and $H$, we derive from the last formula
$$
\begin{aligned}
0&=\|LHLv\|^2=\l LHLv,LHLv\r=\l L^2HLv,HLv\r\\&=\l HL^3v,HLv\r
=\l L^3v,Lv\r=\|L^2v\|^2.
\end{aligned}
$$
We have also used that $H$ is a unitary operator. Hence $Lv=0$. We have thus proved \eqref{1.52}.

Now, the equalities
$$
\begin{aligned}
0=(LH)^k((LH)^m-(HL)^m)&=(LH)^{k-1}L H((LH)^m-(HL)^m)\\&=(LH)^{k-1}L ((HL)^m-(LH)^m)H
\end{aligned}
$$
imply with the help of \eqref{1.52} that
\begin{equation}
L ((LH)^m-(HL)^m)=0.
                                        \label{1.54}
\end{equation}

Given $u\in C^\infty(\S)$, we have by \eqref{1.54}
\begin{equation}
\begin{aligned}
\|((L&H)^m-(HL)^m)u\|^2\\&=\l L((LH)^m-(HL)^m)u,(HL)^{m-1}Hu\r
-\l ((LH)^m-(HL)^m)u,(HL)^{m}u\r\\
&=-\l ((LH)^m-(HL)^m)u,(HL)^{m}u\r.
\end{aligned}
                                            \label{1.55}
\end{equation}
Represent $u$ as the sum $u=u_1+u_2$, where $u_1\in \overline{{\rm Ran}(L)}$ and $u_2\in \overline{{\rm Ran}(L)}^\bot={\rm Ker}(L)$ when we look at $L$ as an unbounded operator in $L^2(\S)$.
Then $(HL)^{m}u=(HL)^{m}u_1$ and \eqref{1.55} gives
$$
\|((LH)^m-(HL)^m)u\|^2=-\l (LH)^m ((LH)^m-(HL)^m)u,u_1\r.
$$
Choose a sequence $u_{1,n}\in C^\infty(\S)\ (n=1,2,\dots)$ such that $Lu_{1,n}$ converges to $u_1$ in $L^2(\S)$ as $n\to +\infty$. Then
$$
\begin{aligned}
\|((LH)^m-(HL)^m)u\|^2&=-\lim_{n\to\infty}\l (LH)^m ((LH)^m-(HL)^m)u,Lu_{1,n}\r\\
&=-\lim_{n\to\infty}\l L(LH)^m ((LH)^m-(HL)^m)u,u_{1,n}\r\\
&=-\lim_{n\to\infty}\l L^2(HL)^{m-1}H ((LH)^m-(HL)^m)u,u_{1,n}\r.
\end{aligned}
$$
On using the permutability of $L^2$ and $H$, we obtain
$$
\|((LH)^m-(HL)^m)u\|^2=-\lim_{n\to\infty}\l (HL)^{m-1}H L^2((LH)^m-(HL)^m)u,u_{1,n}\r.
$$
By \eqref{1.54}, the right-hand side of the latter formula equals zero and we obtain
$$
((LH)^m-(HL)^m)u=0.
$$
\end{proof}

\section{Proof of Theorem \ref{Th0.1}}

Let $\Lambda^{1/2}:C^\infty({\mathbb S})\rightarrow C^\infty({\mathbb S})$ be the nonnegative self-adjoint operator  satisfying $(\Lambda^{1/2})^2=\Lambda$. In other words, $\Lambda^{1/2}$ is defined by
\begin{equation}
\Lambda^{1/2}e^{in\theta}=\sqrt{|n|}e^{in\theta}\quad (n\in\Z).
                                   \label{2.1}
\end{equation}
The operator $D$ can be expressed through $\Lambda^{1/2}$ and $H$:
\begin{equation}
D=\Lambda^{1/2}H\Lambda^{1/2}.
                                   \label{2.2}
\end{equation}
Obviously, operators $\Lambda^{1/2}$ and $H$ commute.

Given a real function $a\in C^\infty(\S)$,
define the operator $L:C^\infty({\mathbb S})\rightarrow C^\infty({\mathbb S})$ by formula \eqref{0.6},
where $a$ stands for the operator of multiplication by the function $a$. We do not designate the dependence of $L$ on $a$ explicitly in order to use formulas of the previous section without changing notations. Nevertheless, the reader should remember that $L$ depends on $a$.

Observe that $L$ is a self-adjoint first order pseudodifferential operator and satisfies
\begin{equation}
[L,H]=\Lambda^{1/2}[a,H]\Lambda^{1/2}.
                                   \label{2.3}
\end{equation}
The commutator $[a,H]$ is a smoothing operator for every function $a\in C^\infty(\S)$ \cite[Section 5.4]{JS}. We see from \eqref{2.3} that $[L,H]$ is also a smoothing operator.
By Theorem \ref{Th1.1},
\begin{equation}
\T[L^{2m}-(LH)^{2m}]\ge 0.
                                   \label{2.4}
\end{equation}

As follows from \eqref{0.6} and \eqref{2.2},
\begin{equation}
L^{2m}=\Lambda^{1/2}(a\Lambda)^{2m-1}a\Lambda^{1/2},\quad
(LH)^{2m}=\Lambda^{1/2}(aD)^{2m-1}a\Lambda^{1/2}H.
                                   \label{2.4a}
\end{equation}
Substitute these values into \eqref{2.4} to obtain
$$
\T\big[\Lambda^{1/2}(a\Lambda)^{2m-1}a\Lambda^{1/2}-\Lambda^{1/2}(aD)^{2m-1}a\Lambda^{1/2}H\big]\ge 0.
$$
With the help of the trigonometric basis, this can be written as
$$
\sum\limits_{n\in{\mathbb Z}}\Big(\big\langle(a\Lambda)^{2m-1}a\Lambda^{1/2}e^{in\theta},\Lambda^{1/2}e^{in\theta}\big\rangle
-\big\langle(aD)^{2m-1}a\Lambda^{1/2}He^{in\theta},\Lambda^{1/2}e^{in\theta}\big\rangle\Big)\ge 0.
$$

As is seen from \eqref{1.0} and \eqref{2.1},
$$
\big\langle(a\Lambda)^{2m-1}a\Lambda^{1/2}e^{in\theta},\Lambda^{1/2}e^{in\theta}\big\rangle
=|n|\big\langle(a\Lambda)^{2m-1}a e^{in\theta},e^{in\theta}\big\rangle
=\big\langle(a\Lambda)^{2m} e^{in\theta},e^{in\theta}\big\rangle,
$$
$$
\big\langle(aD)^{2m-1}a\Lambda^{1/2}He^{in\theta},\Lambda^{1/2}e^{in\theta}\big\rangle
=n\big\langle(aD)^{2m-1}a e^{in\theta},e^{in\theta}\big\rangle
=\big\langle(aD)^{2m} e^{in\theta},e^{in\theta}\big\rangle.
$$
Therefore
$$
\begin{aligned}
&\T[L^{2m}-(LH)^{2m}]\\
&=\sum\limits_{n\in{\mathbb Z}}\Big(\big\langle(a\Lambda)^{2m-1}a\Lambda^{1/2}e^{in\theta},\Lambda^{1/2}e^{in\theta}\big\rangle
-\big\langle(aD)^{2m-1}a\Lambda^{1/2}He^{in\theta},\Lambda^{1/2}e^{in\theta}\big\rangle\Big)\\
&=\sum\limits_{n\in{\mathbb Z}}\Big(\big\langle(a\Lambda)^{2m}e^{in\theta},e^{in\theta}\big\rangle
-\big\langle(aD)^{2m}e^{in\theta},e^{in\theta}\big\rangle\Big)
=\T[(a\Lambda)^{2m}-(aD)^{2m}]=Z_m(a).
\end{aligned}
$$
In particular, we have proved \eqref{0.7}. Together with \eqref{0.7}, inequality \eqref{2.4} proves the second statement of Theorem \ref{Th0.1}: $Z_m(a)\ge0$.

\begin{lemma} \label{L2.1}
Given a real function $a\in C^\infty(\S)$ and integer $m\ge 1$, define functions $g_n\in C^\infty({\mathbb S})\ (n\in{\mathbb Z})$ by
\begin{equation}
g_n(\theta)=(aD)^{m-1}ae^{in\theta}.
                                   \label{2.7}
\end{equation}
Then
\begin{equation}
Z_m(a)\ge 4\sum\limits_{n>0,k>0}nk|(\widehat{g_n})_{-k}|^2.
                                   \label{2.8}
\end{equation}
\end{lemma}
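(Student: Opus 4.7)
The plan is to apply inequality \eqref{1.9} of Theorem \ref{Th1.1}, which gives $Z_m(a) = \T[L^{2m}-(LH)^{2m}] = \varphi(1) \ge \varphi(m)$, and then to evaluate $\varphi(m)$ explicitly in terms of Fourier coefficients of $g_n$.

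First I would establish, by a straightforward induction on $m$, the identity
$$(LH)^m = \Lambda^{1/2} F \Lambda^{1/2} H, \qquad F := (aD)^{m-1}a,$$
using $L=\Lambda^{1/2}a\Lambda^{1/2}$, the commutation of $H$ with $\Lambda^{1/2}$, and the relation $\Lambda^{1/2}H\Lambda^{1/2}=D$ of \eqref{2.2}. Setting $h_n := (LH)^m e^{in\theta}$, the definition $g_n = Fe^{in\theta}$ then yields the basic representation $h_n = \mathrm{sgn}(n)\sqrt{|n|}\,\Lambda^{1/2}g_n$.

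Next, mirroring the computation performed between \eqref{1.14} and \eqref{1.15} and using $(HL)^m = H(LH)^m H = G_{\omega_m}^*$, I would write
$$\varphi(m) = \sum_{n\in\Z}\bigl(\|h_n\|^2 - \mathrm{sgn}(n)\l h_n, Hh_n\r\bigr).$$
Expanding $g_n$ in its Fourier series and using $\Lambda^{1/2}e^{ik\theta}=\sqrt{|k|}e^{ik\theta}$ together with $He^{ik\theta}=\mathrm{sgn}(k)e^{ik\theta}$, one finds $\|h_n\|^2 = |n|\sum_k|k|\,|(\widehat{g_n})_k|^2$ and $\l h_n, Hh_n\r = |n|\sum_k k\,|(\widehat{g_n})_k|^2$ (where $\mathrm{sgn}(k)|k|=k$ has been used). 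Splitting the sum by the sign of $n$, the $n$-th summand reduces to $2n\sum_{k>0}k|(\widehat{g_n})_{-k}|^2$ when $n>0$, to zero when $n=0$, and to $2|n|\sum_{k>0}k|(\widehat{g_n})_k|^2$ when $n<0$.

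Finally I would invoke the hypothesis that $a$ is real. A short induction based on $\overline{Du}=-D\bar u$ (valid for real $a$) gives $\overline{g_n}=(-1)^{m-1}g_{-n}$, and therefore $|(\widehat{g_{-n}})_k|=|(\widehat{g_n})_{-k}|$. Relabeling $n\mapsto -n$ in the $n<0$ contribution shows it coincides with the $n>0$ contribution, producing $\varphi(m)=4\sum_{n>0,k>0}nk|(\widehat{g_n})_{-k}|^2$; combined with $Z_m(a)\ge\varphi(m)$ this yields \eqref{2.8}. The only slightly delicate ingredient is the reality identity $\overline{g_n}=(-1)^{m-1}g_{-n}$, which is what enables the two halves of the sum to be combined; everything else is careful bookkeeping with Fourier coefficients.
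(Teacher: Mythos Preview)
Your proposal is correct and follows essentially the same approach as the paper: apply Theorem~\ref{Th1.1} to get $Z_m(a)\ge\varphi(m)$, then compute $\varphi(m)$ exactly in Fourier coefficients of $g_n$, using the reality of $a$ to fold the $n<0$ contribution onto the $n>0$ one. The only cosmetic difference is that the paper first rewrites $\varphi(m)$ as $\sum_n\big(|n|\langle\Lambda g_n,g_n\rangle-n\langle Dg_n,g_n\rangle\big)$ before passing to Fourier coefficients, whereas you work directly with $h_n=(LH)^me^{in\theta}$; since $\|h_n\|^2-\mathrm{sgn}(n)\langle h_n,Hh_n\rangle=|n|\langle\Lambda g_n,g_n\rangle-n\langle Dg_n,g_n\rangle$, the two computations are the same (and your sign $(-1)^{m-1}$ in $\overline{g_n}=(-1)^{m-1}g_{-n}$ is in fact the correct one).
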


\begin{proof}
By Theorem \ref{Th1.1},
$$
\T\,[L^{2m}-(LH)^{2m}]\ge \T\,\big[H(LH)^{m-1}L^2H(LH)^{m-1}-(LH)^{2m}\big].
$$
By \eqref{0.7}, the left-hand side coincides with $Z_m(a)$, i.e., the inequality can be written as
\begin{equation}
Z_m(a)\ge \T\,\big[H(LH)^{m-1}L^2H(LH)^{m-1}-(LH)^{2m}\big].
                                   \label{2.9}
\end{equation}

As follows from \eqref{2.2} and \eqref{0.6},
$$
H(LH)^{m-1}L^2H(LH)^{m-1}=H\Lambda^{1/2}(aD)^{m-1}a\Lambda (aD)^{m-1}a\Lambda^{1/2}H.
$$
Substitute this value and \eqref{2.4a} into \eqref{2.9}
$$
Z_m(a)\ge \T\,\big[H\Lambda^{1/2}(aD)^{m-1}a\Lambda (aD)^{m-1}a\Lambda^{1/2}H-\Lambda^{1/2}(aD)^{2m-1}a\Lambda^{1/2}H\big].
$$
On using the trigonometric basis, we write this in the form
\begin{equation}
\begin{aligned}
Z_m(a)\ge \sum\limits_{n\in{\mathbb Z}}\Big(&\big\langle H\Lambda^{1/2}(aD)^{m-1}a\Lambda (aD)^{m-1}a\Lambda^{1/2}He^{in\theta},e^{in\theta}\big\rangle\\
&-\big\langle\Lambda^{1/2}(aD)^{2m-1}a\Lambda^{1/2}He^{in\theta},e^{in\theta}\big\rangle\Big).
\end{aligned}
                                   \label{2.10}
\end{equation}

On using the equalities $He^{in\theta}=\mbox{sgn}(n)e^{in\theta}$ and $\Lambda^{1/2}e^{in\theta}=|n|^{1/2}e^{in\theta}$, we transform the first summand on the right-hand side of \eqref{2.10} as follows:
$$
\begin{aligned}
\big\langle H\Lambda^{1/2}(aD)^{m-1}a\Lambda (aD)^{m-1}&a\Lambda^{1/2}He^{in\theta},e^{in\theta}\big\rangle\\
&=\big\langle (aD)^{m-1}a\Lambda (aD)^{m-1}a\Lambda^{1/2}He^{in\theta},\Lambda^{1/2}He^{in\theta}\big\rangle\\
&=|n|\big\langle (aD)^{m-1}a\Lambda (aD)^{m-1}ae^{in\theta},e^{in\theta}\big\rangle\\
&=|n|\big\langle \big((aD)^{m-1}a\big)^*\Lambda (aD)^{m-1}ae^{in\theta},e^{in\theta}\big\rangle\\
&=|n|\big\langle \Lambda (aD)^{m-1}ae^{in\theta},(aD)^{m-1}ae^{in\theta}\big\rangle
=|n|\langle \Lambda g_n,g_n\rangle.
\end{aligned}
$$
We have used \eqref{2.7} for the last equality of the chain.
The second summand on the right-hand side of \eqref{2.10} is transformed similarly:
$$
\begin{aligned}
\big\langle \Lambda^{1/2}(aD)^{2m-1}a\Lambda^{1/2}&He^{in\theta},e^{in\theta}\big\rangle
=\big\langle (aD)^{2m-1}a\Lambda^{1/2}He^{in\theta},\Lambda^{1/2}e^{in\theta}\big\rangle\\
&=n\big\langle (aD)^{2m-1}ae^{in\theta},e^{in\theta}\big\rangle
=n\big\langle \big((aD)^{m-1}a\big)^*D (aD)^{m-1}ae^{in\theta},e^{in\theta}\big\rangle\\
&=n\big\langle D (aD)^{m-1}ae^{in\theta},(aD)^{m-1}ae^{in\theta}\big\rangle
=n\langle D g_n,g_n\rangle.
\end{aligned}
$$
Now, \eqref{2.10} takes the form
\begin{equation}
Z_m(a)\ge \sum\limits_{n\in{\mathbb Z}}\Big(|n|\langle \Lambda g_n,g_n\rangle
-n\langle D g_n,g_n\rangle\Big).
                                   \label{2.11}
\end{equation}

We transform the sum on the right-hand side of (\ref{2.11}) as follows:
\begin{equation}
\begin{aligned}
\sum_{n\in \Z}\big(&|n|\langle \Lambda g_n,g_n\rangle-n\langle Dg_n,g_n\rangle\big)=\\
&=\sum_{n>0}n\big(\langle \Lambda g_n,g_n\rangle-\langle Dg_n,g_n\rangle\big)
+\sum_{n<0}(-n)\big(\langle \Lambda g_n,g_n\rangle+\langle Dg_n,g_n\rangle\big)\\
&=\sum_{n>0}n\langle (\Lambda-D) g_n,g_n\rangle
+\sum_{n>0}n\langle (\Lambda+D) g_{-n},g_{-n}\rangle.
\end{aligned}
                                 \label{2.12}
\end{equation}
The operator $D$ satisfies $\overline{Dg}=-D\bar g$. Therefore
$$
\overline{g_n}=\overline{(aD)^{2m-1} a e^{in\theta}}=-(aD)^{2m-1}a e^{-in\theta}
=-g_{-n}.
$$
Formula (\ref{2.12}) is now written as
\begin{equation}
\sum_{n\in \Z}\big(|n|\langle \Lambda g_n,g_n\rangle-n\langle Dg_n,g_n\rangle\big)
=\sum_{n>0}n\langle (\Lambda-D) g_n,g_n\rangle
+\sum_{n>0}n\langle (\Lambda+D) \overline{g_{n}},\overline{g_{n}}\rangle.
                                 \label{2.13}
\end{equation}

For any function $g$,
$$
\langle (\Lambda-D) g,g\rangle=2\sum\limits_{k>0}k|{\hat g}_{-k}|^2,\quad
\langle (\Lambda+D) g,g\rangle=2\sum\limits_{k>0}k|{\hat g}_{k}|^2.
$$
Therefore (\ref{2.13}) takes the form
$$
\sum_{n\in \Z}\big(|n|\langle \Lambda g_n,g_n\rangle-n\langle Dg_n,g_n\rangle\big)
=2\sum_{n>0,k>0}nk|(g_n)^\wedge_{-k}|^2
+2\sum_{n>0,k>0}nk|(\overline{g_n})^\wedge_{k}|^2.
$$
Since $(\bar g)^\wedge_k=\overline{{\hat g}_{-k}}$ for every function $g$, the latter formula can be written as
$$
\sum_{n\in \Z}\big(|n|\langle \Lambda g_n,g_n\rangle-n\langle Dg_n,g_n\rangle\big)
=4\sum_{n>0,k>0}nk|(\widehat{g_n})_{-k}|^2.
$$
Replacing the right-hand side of (\ref{2.11}) with the latter expression, we arrive to (\ref{2.8}).
\end{proof}

\begin{proof}[Proof of Theorem \ref{Th0.1}]
For $m=1$, estimate (\ref{0.4}) follows from Edward's formula (\ref{0.3}).
Therefore we assume $m\geq2$. Such $m$ is fixed till the end of the proof as well as a real function $a\in C^{\infty}(\S)$. The dependence of different quantities on $m$ and $a$ is not designated explicitly.

Obviously, for $1\leq n\in \N$,
\begin{equation}
(aD)^{m-1} a e^{in \theta}=\Big(\sum_{s=1}^m n^{s-1} f_s \Big)e^{in\theta},
                   \label{3.2}
\end{equation}
where
\begin{equation}
f_{m}=a^m=b
                   \label{3.3}
\end{equation}
and
$$
f_s=P_s(a, Da,\dots, D^{m-s}a)\quad\textrm{for}\quad s=1,\dots, m-1
$$
with some universal polynomials $P_s(X_1,\ldots, X_{m-s})$ in $m-s$ variables.

Define functions $g_n\in C^\infty({\mathbb S})\ (n\in{\mathbb Z})$ by (\ref{2.7}). With the help of (\ref{3.2}), we see that
$$
(\widehat{g_n})_{-k}=\sum_{s=1}^m n^{s-1}(\widehat{f_s})_{-(n+k)}\quad (k=1,2,\dots).
$$
Substituting this value into (\ref{2.8}), we obtain
\begin{equation}
Z_m(a)\geq
4\sum_{n,k=1}^\infty nk\,\Big|\sum_{s=1}^m n^{s-1}(\widehat{f_s})_{-(n+k)}\Big|^2
=4\sum_{j=2}^\infty\sum_{n=1}^{j-1}n(j-n)\,\Big|\sum_{s=1}^m n^{s-1}(\widehat{f_s})_{-j}\Big|^2.
                   \label{3.4}
\end{equation}

For every integer $j\geq2$, introduce the sesquilinear form $F_j:{\mathbb C}^m\times{\mathbb C}^m\rightarrow{\mathbb C}$ by
$$
F_j(x,y)={1\over j}\sum_{n=1}^{j-1}{n\over j}(1-{n/j})\sum_{s=1}^m (n/j)^{s-1}x_s
\sum_{t=1}^m (n/j)^{t-1}\overline{y_t}.
$$
Observe that the corresponding Hermitian form is non-negative:
\begin{equation}
F_j(x,x)={1\over j}\sum_{n=1}^{j-1}{n\over j}(1-n/j)\Big|\sum_{s=1}^m (n/j)^{s-1}x_s\Big|^2\geq0.
                                                            \label{3.5}
\end{equation}

For $j\geq2$, define the vector ${\tilde f}_j\in{\mathbb C}^m$ by
\begin{equation}
({\tilde f}_j)_s=j^{s-1}(\widehat{f_s})_{-j}\quad(1\leq s\leq m).
                                                            \label{3.6}
\end{equation}
Then
$$
\sum_{n=1}^{j-1}n(j-n)\,\Big|\sum_{s=1}^m n^{s-1}(\widehat{f_s})_{-j}\Big|^2=j^3F_j({\tilde f}_j,{\tilde f}_j)
$$
and inequality (\ref{3.4}) can be written in the form
$$
Z_m(a)\geq 4\sum_{j=2}^\infty j^3F_j({\tilde f}_j,{\tilde f}_j).
$$
Since all summands on the right-hand side are non-negative, this implies
\begin{equation}
Z_m(a)\geq 4\sum_{j=m+1}^\infty j^3F_j({\tilde f}_j,{\tilde f}_j).
                                                            \label{3.7}
\end{equation}

\begin{lemma} \label{L3.1}
There exists a positive constant $c_m$ depending only on $m$ such that
$$
F_j(x,x)\geq c_m\|x\|^2
$$
for every $j\geq m+1$ and for every $x\in{\mathbb C}^m$, where $\|x\|=\big(\sum_{s=1}^m|x_s|^2\big)^{1/2}$ is the standard norm on ${\mathbb C}^m$.
\end{lemma}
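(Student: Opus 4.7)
The key observation is that $F_j(x,x)$ is (up to the weight $t(1-t)$) a Riemann sum for an integral that defines a natural positive definite Hermitian form on $\mathbb{C}^m$. I would set
\begin{equation*}
F_\infty(x,x)=\int_0^1 t(1-t)\,\Big|\sum_{s=1}^m t^{s-1}x_s\Big|^2\,dt,
\end{equation*}
and first prove that $F_\infty$ is positive definite on $\mathbb{C}^m$. Indeed, if $x\neq 0$, then $p_x(t)=\sum_{s=1}^m t^{s-1}x_s$ is a nonzero polynomial of degree at most $m-1$, so $|p_x(t)|^2>0$ except at finitely many points, and the weight $t(1-t)$ is strictly positive on $(0,1)$. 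Hence $F_\infty(x,x)>0$ for $x\neq 0$, and by compactness of the unit sphere in $\mathbb{C}^m$ there is a constant $c_\infty>0$ with $F_\infty(x,x)\ge c_\infty\|x\|^2$ for all $x\in\mathbb{C}^m$.

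Next I would show $F_j\to F_\infty$ uniformly on the unit sphere as $j\to\infty$. Writing $h_x(t)=t(1-t)|p_x(t)|^2$, which is a polynomial in $t$ of degree at most $2m$, one has
\begin{equation*}
F_j(x,x)=\frac{1}{j}\sum_{n=1}^{j-1}h_x(n/j),\qquad F_\infty(x,x)=\int_0^1 h_x(t)\,dt.
\end{equation*}
Since $h_x(0)=h_x(1)=0$, the standard error estimate for left (or midpoint) Riemann sums gives $|F_j(x,x)-F_\infty(x,x)|\le C_m\,j^{-1}\|h_x\|_{C^1([0,1])}$. The coefficients of $h_x$ are quadratic in the components of $x$, so $\|h_x\|_{C^1}\le K_m\|x\|^2$ with a constant depending only on $m$. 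Consequently, there is $J_0=J_0(m)$ such that $F_j(x,x)\ge \tfrac12 c_\infty\|x\|^2$ for all $j\ge J_0$.

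For the finitely many values $j\in\{m+1,\dots,J_0-1\}$ one observes that $F_j$ is itself positive definite on $\mathbb{C}^m$: if $F_j(x,x)=0$, then $p_x(n/j)=0$ for every $n=1,\dots,j-1$, but a polynomial of degree at most $m-1$ with at least $j-1\ge m$ zeros must vanish identically, forcing $x=0$. Each such $F_j$ therefore has a strictly positive smallest eigenvalue $\mu_j>0$, and since there are only finitely many of them, the quantity
\begin{equation*}
c_m=\min\Big(\tfrac12 c_\infty,\;\mu_{m+1},\ldots,\mu_{J_0-1}\Big)>0
\end{equation*}
has the required property. The main technical point is the positive-definiteness of $F_\infty$ together with the condition $j\ge m+1$ (which is exactly what is needed to guarantee non-degeneracy of the finitely many pre-threshold forms $F_j$); the Riemann-sum convergence is then straightforward because $h_x$ is a polynomial.
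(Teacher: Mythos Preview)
Your proof is correct and follows essentially the same strategy as the paper: define the limiting form $F_\infty$, show each $F_j$ (for $j\ge m+1$) and $F_\infty$ are positive definite via the fact that a nonzero polynomial of degree $\le m-1$ cannot vanish at $m$ points, and use the Riemann-sum convergence $F_j\to F_\infty$ to obtain a uniform lower bound. The only cosmetic difference is that the paper compresses your large-$j$/small-$j$ split into the single observation that a convergent sequence of positive-definite Hermitian forms with positive-definite limit has uniformly bounded-below smallest eigenvalue, and it phrases the non-degeneracy via a Vandermonde system rather than zero-counting.
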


The proof of the lemma is presented at the end of the section, and now we finish the proof of Theorem \ref{Th0.1}.

With the help of Lemma \ref{L3.1}, we derive from (\ref{3.7})
$$
Z_m(a)\geq 4c_m\sum_{j=m+1}^\infty j^3\|{\tilde f}_j\|^2\geq 4c_m\sum_{j=m+1}^\infty j^3\big|({\tilde f}_j)_m\big|^2.
$$
By (\ref{3.3}) and (\ref{3.6}), $({\tilde f}_j)_m=j^{m-1}{\hat b}_{-j}$. Substituting this value into the last inequality, we obtain
$$
Z_m(a)\geq 4c_m\sum_{j=m+1}^\infty j^{2m+1}|{\hat b}_{-j}|^2.
$$
This coincides with desired inequality (\ref{0.4}) because $|{\hat b}_{-j}|=|{\hat b}_{j}|$.
\end{proof}

\begin{proof}[Proof of Lemma \ref{L3.1}.]
Introduce the sesquilinear form $F_\infty:{\mathbb C}^m\times{\mathbb C}^m\rightarrow{\mathbb C}$ by
$$
F_\infty(x,y)=\int_0^1 t(1-t)\sum_{s=0}^{m-1}t^{s-1}x_s\sum_{s'=0}^{m-1}t^{s'-1}\overline{y_{s'}}\,dt.
$$
The right-hand side of (\ref{3.5}) is the Riemann integral sum for the integral
\begin{equation}
F_\infty(x,x)=\int_0^1 t(1-t)\Big|\sum_{s=1}^mt^{s-1}x_s\Big|^2\,dt.
                                                     \label{3.8}
\end{equation}
Therefore $F_\infty=\lim_{j\rightarrow\infty}F_j$. Since the Hermitian forms $F_j(x,x)$ and $F_\infty(x,x)$ are non-negative, it suffices to prove that, for $j\geq m+1$,
$$
F_\infty(x,x)\neq 0\quad\mbox{and}\quad F_j(x,x)\neq 0\quad\mbox{for}\quad 0\neq x\in{\mathbb C}^m.
$$

In view of (\ref{3.8}), the equality $F_\infty(x,x)=0$ means that
$$
\sum_{s=1}^mt^{s-1}x_s=0\quad\mbox{for}\quad t\in(0,1).
$$
Choosing a sequence $0<t_1<\dots<t_m<1$, we obtain the linear system
$$
\sum_{s=1}^mt_k^{s-1}x_s=0\quad(1\leq k\leq m)
$$
with the non-degenerate matrix $(t_k^{s-1})_{1\leq k,s\leq m}$. Therefore $x=0$.

Let $j\geq m+1$. In view of (\ref{3.5}), the equality $F_j(x,x)=0$ implies
$$
\sum_{s=1}^m (n/j)^{s-1}x_s=0\quad(1\leq n\leq m).
$$
This is again a linear system with non-degenerate matrix which implies $x=0$.
\end{proof}

\section{The null space of zeta-invariants}

In the case of a general operator $L$, Corollary \ref{C1.1} cannot be improved.
However, Corollary \ref{C1.1} is greatly improved by Theorem \ref{Th0.3} when $L=\Lambda^{1/2}a\Lambda^{1/2}$ for a real function $a\in C^\infty(\S)$.

\begin{proof}[Proof of Theorem \ref{Th0.3}.]
The case of $m=1$ follows from Edward's formula \eqref{0.3}.
We assume $m\ge 2$ for the rest of the proof.

We first prove the ``if'' statement.
Assume \eqref{0.5} to be valid.
Then
\begin{equation}
a\Lambda=aDH=HaD,
                                    \label{4.2}
\end{equation}
i.e., the operators $aD$ and $H$ commute.
Indeed, for $n\in \Z$,
$$
\begin{aligned}
HaDe^{in\theta}&=nHae^{in\theta}=n\hat a_0 H(e^{in\theta})+n\hat a_1 H(e^{i(n+1)\theta})+n\hat a_{-1}H( e^{i(n-1)\theta})\\
&=|n|\hat a_0 e^{in\theta}+|n|\hat a_1 e^{i(n+1)\theta}+|n|\hat a_{-1}e^{i(n-1)\theta}
=a\Lambda e^{in\theta}.
\end{aligned}
$$
As is seen from \eqref{4.2} and the identity $H^2=I$, $(a\Lambda)^{2m}=(aD)^{2m}$ and hence $Z_m(a)=0$.

Let $\stackrel\circ{\D}\,\,=\{z\in\C\mid|z|<1\}$ be the interior of the unit disk $\D$. We say that a function
$\varphi\in C(\S)$ admits a holomorphic extension to $\D$ if there exists $\Phi\in C(\D)$ which is holomorphic in $\stackrel\circ{\D}$ and such that
$\Phi|_{\S}=\varphi$.

Now, we prove the ``only if" part.
Assume that $Z_m(a)=0$. By \eqref{0.7},
$$
\T[L^m-(LH)^m]=Z_m(a)=0
$$
for the operator $L=\Lambda^{1/2}a\Lambda^{1/2}$. Apply Theorem \ref{Th1.2} to obtain $L^2H=HL^2$, i.e.,
$$
\Lambda^{1/2}a\Lambda a\Lambda^{1/2}H
=H\Lambda^{1/2}a\Lambda a\Lambda^{1/2}.
$$
This implies
$$
\Lambda^{1/2}a\Lambda a\Lambda^{1/2}He^{i\theta}=H\Lambda^{1/2}a\Lambda a\Lambda^{1/2}e^{i\theta}.
$$
Since $\Lambda^{1/2}He^{i\theta}=\Lambda^{1/2}e^{i\theta}=e^{i\theta}$, the last formula can be written as
\begin{equation}
\Lambda^{1/2}\tilde h=H\Lambda^{1/2}\tilde h
                                   \label{4.4}
\end{equation}
for the function $\tilde h$ defined by
\begin{equation}
\tilde h=a\Lambda ae^{i\theta}.
                                  \label{4.5}
\end{equation}
Applying the operator $\Lambda^{1/2}$ to equality \eqref{4.4}, we obtain $(\Lambda-\Lambda^{1/2}H\Lambda^{1/2})\tilde h=0$. With the help of \eqref{2.2},
this can be written as
\begin{equation}
(\Lambda-D)\tilde h=0.
                                   \label{4.6}
\end{equation}
As follows from definitions of $\Lambda$ and $D$, $(\Lambda-D)\tilde h=-2\sum_{k<0}k{\hat {\tilde h}}_ke^{ik\theta}$. Therefore each of equations \eqref{4.4} and \eqref{4.6} is equivalent to the statement:
\begin{equation}
\tilde h\ \textrm{admits a holomorphic extension to}\ \D.
                                    \label{4.7}
\end{equation}

Next, we prove that
\begin{equation}
\delta=(ae^{i\theta})^m \textrm{ admits a holomorphic extension to }\D.
                                      \label{4.8}
\end{equation}
This follows from \eqref{0.4}. Indeed, since $Z_m(a)=0$, we see that $\widehat{(a^m)}_j=0$ for $|j|\ge m+1$. Hence $\delta$
admits the holomorphic extension given by the polynomial
$$
\sum_{j=0}^{2m}\widehat{(a^m)}_{j-m}z^j\quad (z\in \D).
$$

We assume for the rest of the proof that $a$ is not identically zero. In particular the function $\delta$ in \eqref{4.8} is not identically zero  and  $a(\theta)\not=0$ for $\theta$ belonging to a dense subset of $\R$.
Then from \eqref{4.5} it follows that
$$
\tilde h=a\Lambda a e^{i\theta}=
a(\Lambda+D)(a e^{i\theta})-(aDa) e^{i\theta}-a^2e^{i\theta}.
$$
Multiply this equation by $\delta$
\begin{equation}
\delta\tilde h=a\delta(\Lambda+D)(a e^{i\theta})-\delta(aDa) e^{i\theta}-\delta a^2e^{i\theta}.
                                                \label{4.10}
\end{equation}
As easily follows from definition \eqref{4.8} of $\delta$,
$$
\delta(aDa) e^{i\theta}+\delta a^2e^{i\theta}=\frac{1}{m}a^2(D\delta)e^{i\theta}.
$$
Therefore \eqref{4.10} can be written as
\begin{equation}
\delta\tilde h=a\delta (\Lambda+D)(a e^{i\theta})-{1\over m}a^2e^{i\theta}D\delta.
                                                     \label{4.12}
\end{equation}

Introduce the functions
\begin{equation}
f= -{1\over m}e^{i\theta}D\delta,\quad
g=\delta(\Lambda+D)(a e^{i\theta}),\quad
h=\delta\tilde h.
                                                     \label{4.14}
\end{equation}
Then equation \eqref{4.12} is written as
\begin{equation}
h=a^2f+ag.
                                                     \label{4.15}
\end{equation}
Multiply this equation by $a^{m-2}$
$$
a^m f=a^{m-2}h-a^{m-1}g.
$$
As is seen from definition \eqref{4.8} of $\delta$, $a^m=e^{-im\theta}\delta$. Substitute this expression for $a$ into the left-hand side of the last formula
\begin{equation}
e^{-mi\theta}\delta f=a^{m-2}h-a^{m-1}g.
                                                     \label{4.16}
\end{equation}
Multiplying this equation by $a$ and using $a^m=e^{-im\theta}\delta$ again, we obtain one more equation
\begin{equation}
e^{-mi\theta}\delta g=a^{m-1}h-ae^{-mi\theta}\delta f.
                                                     \label{4.16a}
\end{equation}

The function $(\Lambda+D)u$ admits a holomorphic extension to $\D$ for every $u\in C^\infty(\S)$. Besides this, if functions $u$ and $v$ admit holomorphic extensions to $\D$, then the functions $u+v,\ uv$, and $Du$ are also holomorphically extendible to $\D$.
Therefore each of the functions $f,g,h$ admits a holomorphic extension to $\D$ as is seen from \eqref{4.7}--\eqref{4.8} and \eqref{4.14}.
Since $a$ is a real function that does not vanish on a dense subset of the circle,
$$
(\Lambda+D)(a e^{i\theta}),\ a\Lambda ae^{i\theta},\textrm{ and }  D\delta\textrm{ are not identically zero}.
$$
Hence $f$, $g$, and $h$ are not identically zero.

We will use the following

\begin{lemma}    \label{L4.1}
Let a real function $b\in C^\infty(\S)$ be such that $b^se^{is\theta}$ admits a holomorphic extension to $\D$ for some integer $s\ge1$.
If $b$ satisfies the equation
\begin{equation}
b^r\varphi^r=\psi^r
                                                         \label{4.18}
\end{equation}
for some integer $r\ge1$, where both functions $\varphi,\psi\in C(\S)$ admit holomorphic extensions to $\D$ and $\psi$ is not identically zero, then the function $b$
is of the form
\begin{equation}
b(\theta)={\hat b}_0+2\Re ({\hat b}_1 e^{i\theta})\quad
\mbox{for some }{\hat b}_0\in\R\mbox{ and }{\hat b}_1\in\C.
                                             \label{4.19}
\end{equation}
\end{lemma}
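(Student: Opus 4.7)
The plan is to show that $u := b e^{i\theta}$, viewed as a function of $z = e^{i\theta}\in\S$, extends to a polynomial $G(z)$ of degree at most $2$ in $z$; once this is established, reality of $b = u/z$ immediately forces the form \eqref{4.19}.

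The first main step uses hypothesis \eqref{4.18} together with the hypothesis on $b^s e^{is\theta}$. Since $b^s e^{is\theta}$ admits a holomorphic extension to $\D$ and $b^s$ is real, the Fourier coefficients of $b^s$ vanish outside $\{-s,\dots,s\}$; hence on $\S$, $u^s = F(z)$ for a polynomial $F$ of degree at most $2s$. Rewriting \eqref{4.18} as $(u\varphi)^r = (z\psi)^r$ on $\S$ and combining with $u^s = F$, one obtains by analytic continuation the identity $(z\Psi)^{rs} = F^r \Phi^{rs}$ on $\D$, where $\Phi, \Psi$ denote the holomorphic extensions of $\varphi, \psi$. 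In particular the meromorphic function $R(z) := z\Psi(z)/\Phi(z)$ satisfies $R^{rs} = F^r$; since the right-hand side is a polynomial, $R$ has no poles in $\D$ and is thus holomorphic there. On $\S$ one has $u = \zeta\, R|_\S$ where $\zeta$ is a locally constant $r$-th root of unity, determined by $u\varphi = \zeta\, z\psi$.

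The second main step is a Schwarz-reflection argument for $R$. The reality identity $u(z) = z^2\, \overline{u(z)}$ on $|z| = 1$ transfers to $\zeta^2 R(z) = z^2\, \overline{R(z)}$ on $\S$. Since $z^2 \overline{R(z)}/R(z)$ extends continuously across the zeros of $R|_\S$ (by looking at the leading-order expansion of $R$ there) and $\zeta^2$ is locally constant, $\zeta^2$ must be globally constant, say $\zeta^2 \equiv c_0$. The identity $c_0 R(z) = z^2 \overline{R(z)}$ on $|z|=1$ then glues $R$ on $\D$ to $R^*(z) := \overline{R(1/\bar z)}$ on $\{|z|>1\}$ via $R(z) = z^2 R^*(z)/c_0$, giving a meromorphic extension of $R$ to $\hat\C$ whose only possible pole is at $\infty$ of order at most $2$. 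Therefore $R$ is a polynomial of degree at most $2$.

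Finally, to upgrade $\zeta^2 \equiv c_0$ to $\zeta$ being globally constant: if $\zeta$ jumped from $\zeta_0$ to $-\zeta_0$ across some $\theta_0 \in \S$, continuity of $u$ would force $R(e^{i\theta_0}) = 0$, and matching one-sided Taylor expansions of $u = \zeta R|_\S$ at $\theta_0$---using that $b \in C^\infty(\S)$, so $u$ is $C^\infty$---would force the first non-vanishing Taylor coefficient of $R|_\S$ at $\theta_0$ to vanish, contradicting the finite order of the zero. Hence $\zeta = \zeta_0$ globally, $u = \zeta_0 R$ is a polynomial of degree at most $2$ in $z$, and reality of $b = u/z$ yields $b = \hat b_0 + 2\Re(\hat b_1 e^{i\theta})$. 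The main obstacle is this no-jump analysis for $\zeta$, where the $C^\infty$-regularity of $b$ is essential to rule out sign-switching of $\zeta$ at zeros of $R|_\S$.
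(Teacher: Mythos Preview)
Your overall strategy---reducing to showing that $u = be^{i\theta}$ agrees (up to a root of unity) with a polynomial of degree at most $2$, via the quotient $R = z\Psi/\Phi$---is the same as the paper's. The paper first takes $s$-th powers in \eqref{4.18} to reduce to the case $r = s$; with that normalization its function $H := z\Psi/\Phi$ satisfies $H^r = \Delta$, where $\Delta$ (your $F$) is the polynomial of degree $\le 2r$ extending $b^re^{ir\theta}$. The paper then does a direct case analysis on the location of the zeros of $\Delta$ (inside $\stackrel\circ\D$, at $0$, or on $\S$), using $H^r = \Delta$ and the reflection symmetry $\Delta(z) = z^{2r}\overline{\Delta(1/\bar z)}$ to force each zero to have multiplicity $\ge r$, and hence $\Delta$ to be a perfect $r$-th power. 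Your Schwarz reflection argument is a legitimate alternative to that case analysis.

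There is, however, a real gap in your first step. To conclude that $\zeta^2$ is globally constant you assert that $z^2\overline{R(z)}/R(z)$ extends continuously across each zero of $R|_\S$ ``by looking at the leading-order expansion of $R$ there.'' But at this stage you only know $R$ is holomorphic in the open disk and continuous on $\D$; you have not shown that $R|_\S$ has a \emph{two-sided} leading-order expansion at a boundary zero. A priori $R$ could approach different branches of $F^{1/s}$ on the two sides of such a zero, and then $\overline R/R$ would have different one-sided limits---so $\zeta^2$ could jump. The fix is short but not automatic: from $R^{rs}=F^r$ on the connected set $\stackrel\circ\D\setminus\{F=0\}$, deduce $R^s = \eta F$ for a \emph{single} constant $\eta$; then observe that every zero of the polynomial $F$ has multiplicity divisible by $s$ (interior zeros by holomorphy of $R$, zeros on $\S$ because $F|_\S = u^s$ with $u\in C^\infty(\S)$, exterior zeros by the reflection symmetry $F(z)=z^{2s}\overline{F(1/\bar z)}$). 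Hence $F = cP^s$ for a polynomial $P$ of degree $\le 2$, and $R$ is a constant multiple of $P$. With $R$ now a polynomial, both your continuity claim for $z^2\bar R/R$ and your later no-jump Taylor argument for $\zeta$ go through cleanly---and in fact the Schwarz reflection step becomes unnecessary.
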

The proof of the lemma is given at the end of the section.

Now, we consider separately the case of $m=2$. In this case $\delta=e^{2i\theta}a^2$. Multiply equation \eqref{4.15} by $e^{2i\theta}$
$$
a e^{2i\theta} g=-\delta f+e^{2i\theta}h.
$$
Hypotheses of Lemma \ref{L4.1} are satisfied for $(b,\varphi,\psi,r,s)=(a,e^{2i\theta} g,-\delta f+e^{2i\theta}h,1,m)$.
Applying Lemma \ref{L4.1}, we obtain that $a$ is of form \eqref{0.5}. We assume $m\ge3$ for the rest of the proof.

Let us also consider separately the case of $m=3$.  We write system \eqref{4.15}--\eqref{4.16a} in the matrix form
\begin{equation}
\left(
\begin{array}{cc}
-g&-f\\
h&-g\\
-e^{-3i\theta}\delta f&h
\end{array}
\right)
\left(
\begin{array}{c}
a\\
a^2
\end{array}
\right)
=
\left(
\begin{array}{c}
-h\\
e^{-3i\theta}\delta f\\
e^{-3i\theta}\delta g
\end{array}
\right).
                                             \label{4.21}
\end{equation}

First, assume that $w_2=g^2+fh$ is not identically zero. Eliminating $a^2$ from first two equations of system \eqref{4.21}, we obtain
$$
a(e^{3i\theta}w_2)= \delta f^2+e^{3i\theta}gh.
$$
Hypotheses of Lemma \ref{L4.1} are satisfied for $(b,\varphi,\psi,r,s)=(a,e^{3i\theta} w_2,\delta f^2+e^{3i\theta}gh,1,m)$.
Applying Lemma \ref{L4.1}, we obtain \eqref{0.5}.

Next, assume that $w_2$ is identically zero but $\beta=\delta f^2+e^{i3\theta}gh$ is not identically zero.
Eliminating $a^2$ from first and third equations of system \eqref{4.21}, we obtain
$$
a\beta=-\delta fg+e^{3i\theta}h^2.
$$
This again implies \eqref{0.5} with the help of Lemma \ref{L4.1}.

Finally, consider the case when both $\beta$  and $w_2$ are identically zero, i.e., when
$$
g^2+fh=0,\quad a^3f^2+gh=0.
$$
This implies
$$
a^3f^3=g^3.
$$
Applying Lemma \ref{L4.1} with $(b,\varphi,\psi,r,s)=(a,f,g,3,m)$, we obtain \eqref{0.5}.

We assume $m\ge4$ for the rest of the proof.
Multiply equation \eqref{4.15} by $a^\ell$
\begin{equation}
a^\ell h=a^{\ell+2}f+a^{\ell+1}g\quad(0\le\ell\le m-1).
                                            \label{4.22}
\end{equation}
We combine equations \eqref{4.22} and \eqref{4.14}--\eqref{4.15} into the system
$$
\begin{aligned}
-g&a-fa^2=-h,\\
h&a-ga^2-fa^3=0,\\
h&a^2-ga^3-fa^4=0,\\
&\dots\dots\dots\\
h&a^{m-3}-ga^{m-2}-fa^{m-1}=0,\\
h&a^{m-2}-ga^{m-1}=e^{-im\theta}\delta f,\\
-e^{-im\theta}\delta f&a+ha^{m-1}=e^{-im\theta}\delta g.
\end{aligned}
$$
Write the system in the matrix form
\begin{equation}
M'_{m-1}
\left(
\begin{array}{c}
a\\
a^2\\
\vdots\\
a^{m-1}
\end{array}
\right)
=
\left(
\begin{array}{c}
-h\\
0\\
\vdots\\
0\\
e^{-im\theta}\delta f\\
e^{-im\theta}\delta g
\end{array}
\right)
                                               \label{4.23}
\end{equation}
with the $m\times(m-1)$-matrix that has the block structure
$$
M'_{m-1}=\left(\begin{array}{c}M_{m-1}\\-e^{-im\theta}\delta f\ 0\ \cdots\cdots 0\ h\end{array}\right),
$$
where $M_{m-1}$ coincides, for $n=m-1$, with the three-diagonal $n\times n$-matrix
$$
M_n=\left(
\begin{array}{ccccccccccc}
-g&-f&0&0&0&\dots&0&0&0&0&0\\
h&-g&-f&0&0&\dots&0&0&0&0&0\\
0&h&-g&-f&0&\dots&0&0&0&0&0\\
\cdot&\cdot&\cdot&\cdot&\cdot&\cdot&\cdot&\cdot&\cdot&\cdot&\cdot\\
\cdot&\cdot&\cdot&\cdot&\cdot&\cdot&\cdot&\cdot&\cdot&\cdot&\cdot\\
\cdot&\cdot&\cdot&\cdot&\cdot&\cdot&\cdot&\cdot&\cdot&\cdot&\cdot\\
0&0&0&0&0&\dots&0&h&-g&-f&0\\
0&0&0&0&0&\dots&0&0&h&-g&-f\\
0&0&0&0&0&\dots&0&0&0&h&-g
\end{array}\right).
$$
The matrix $M_n$ is well defined for $n\ge2$. Actually, we have already used $M_2$: system \eqref{4.21} is a partial case of \eqref{4.23}.

Let $w_n$ be the determinant of $M_n$. Developing the determinant with respect to the first row, we obtain the recurrent formula
\begin{equation}
w_n=-g w_{n-1}+hfw_{n-2}\quad\mbox{for}\quad n\ge 4.
                                          \label{4.24}
\end{equation}
Additionally, for any $n\ge 2$,
$$
w_n\textrm{ has a holomorphic extension to }\D
$$
since $f,g$, and $h$ do have such extensions.
Observe also that
\begin{equation}
w_2=g^2+fh,\quad w_3=-g(g^2+2fh).
                                            \label{4.26}
\end{equation}

First, assume that $w_{m-1}$ is not identically zero. Considering first $m-1$ equations of system \eqref{4.23} and inverting the matrix $M_{m-1}$, we obtain that $a (e^{im\theta}w_{m-1})$ has a holomorphic extension to $\D$.
Applying Lemma \ref{L4.1}, we obtain \eqref{0.5}.

Next, assume that both functions $w_{m-1}$ and $w_{m-2}$ are identically zero.
With the help of \eqref{4.24}, this implies $w_j=0$ for every $j\ge 2$. In particular, $w_2=w_3=0$ that gives, by \eqref{4.26}, $g^3=0$. This contradicts our previous assumption on $g$.
Therefore we assume for the rest of the proof that $w_{m-1}\equiv0$ but $w_{m-2}$ is not identically zero.

Let $M_{m-1}^{(1)}$ (respectively, $M_{m-1}^{(2)}$) be the $(m-1)\times(m-1)$-matrix obtained from $M'_{m-1}$ by deleting the $(m-1)$-th row (respectively, by deleting the first row).
Denote the determinant of  $M_{m-1}^{(i)}$ by $w_{m-1}^{(i)} \ (i=1,2)$. A straightforward computation shows that
\begin{equation}
w_{m-1}^{(1)}= h w_{m-2}-e^{-im\theta}\delta f^{m-1},\quad
w_{m-1}^{(2)}=(-1)^{m-1}e^{-im\theta}\delta f w_{m-2}+ h^{m-1}.
                                \label{4.27}
\end{equation}
If either $w_{m-1}^{(1)}$ or $w_{m-1}^{(2)}$ is not identically zero, we can conclude as we did before: either  $a(e^{im\theta}w_{m-1}^{(1)})$ or
$a (e^{im\theta}w_{m-1}^{(2)})$ has a holomorphic extension to $\D$, and then obtain \eqref{0.5} by Lemma \ref{L4.1}.

Finally, assume that $w_{m-1}^{(1)}\equiv w_{m-1}^{(2)}\equiv0$. Then \eqref{4.27} implies
$$
\delta^2 f^m=(-1)^m (e^{2i\theta}h)^m.
$$
Since $\delta=a^me^{im\theta}$, this can be written as
$$
(a^2)^mf^m=(-h)^m.
$$
Applying Lemma \ref{L4.1} with $(b,\varphi,\psi,r,s)=(a^2,f,-h,m,m)$, we obtain
$$
a^2(\theta)={\hat c}_0+2\Re({\hat c}_1e^{i\theta}).
$$
In particular, $e^{2i\theta}a^2$ admits a holomorphic extension to $\D$. Now, we can proceed in the same way as in the above-considered case of $m=2$. Namely, multiply equation \eqref{4.15} by $e^{2i\theta}$
$$
ae^{2i\theta}g=e^{2i\theta}h-e^{2i\theta}a^2f.
$$
The right-hand side admits a holomorphic extension to $\D$. Applying Lemma \ref{L4.1} with
$(b,\varphi,\psi,r,s)=(a,e^{2i\theta}g,e^{2i\theta}h-e^{2i\theta}a^2f,1,2)$, we obtain \eqref{0.5}.
\end{proof}

\begin{proof}[Proof of Lemma \ref{L4.1}]
Taking the $s$th power of both sides of \eqref{4.18}, we obtain $b^{rs}\varphi^{rs}=\psi^{rs}$ and
$b^{rs}e^{irs\theta}$ admits a holomorphic extension to $\D$. The lemma is thus reduced to the case of $r=s$.

Assume hypotheses of Lemma \ref{L4.1} to be satisfied with $r=s$. Let $\Delta$ be the holomorphic extension of $b^re^{ir\theta}$ to $\D$.
We assume that $\Delta$ is not identically zero, otherwise $b\equiv0$ and there is nothing to prove.
 Since $b$ is a real function, we have
\begin{equation}
e^{ir\theta}\overline{\Delta(e^{i\theta})}=e^{-ir\theta}\Delta (e^{i\theta}).
                                                          \label{4.28}
\end{equation}
With the help of the Cauchy theorem, we derive from the last equation
$$
\int_0^{2\pi}\Delta(e^{i\theta})e^{-i(\ell+2r)\theta}d\theta=
\int_0^{2\pi}\overline{\Delta(e^{i\theta})}e^{-i\ell\theta}d\theta
=\overline{\int_0^{2\pi}\Delta(e^{i\theta})e^{i\ell\theta}d\theta}=0
$$
for $\ell>0$. Therefore $\Delta(z)$ is a polynomial of degree at most $2r$. In addition, as is seen from \eqref{4.28},
\begin{equation}
\overline{\Delta(1/{\bar z})}=z^{-2r}\Delta (z)\quad\mbox{for}\quad z\in \C\b\{0\}.
                                                          \label{4.29}
\end{equation}
If
\begin{equation}
\Delta (z)=\alpha_{2r}z^{2r}+\alpha_{2r-1}z^{2r-1}+\dots+\alpha_{1}z+\alpha_0,
                                                          \label{4.30}
\end{equation}
then equation \eqref{4.29} is equivalent to
\begin{equation}
\alpha_{i}=\overline{\alpha_{2r-i}}\quad(0\le i\le 2r).
                                                          \label{4.31}
\end{equation}

Let $\Phi$ and $\Psi$ be analytic extensions to $\D$ of functions $\varphi$ and $\psi$ respectively. By the uniqueness theorem for holomorphic functions,
\eqref{4.18} implies
$$
z^r\Psi^r(z)=\Delta(z)\Phi^r(z)\quad\textrm{for}\quad z\in \D.
$$
This implies: if $z_0\in\,\stackrel\circ{\D}$ is a zero of order $k$ for the function $\Phi(z)$, then $z_0$ is zero of order $\ge k$ for $z\Psi(z)$.
Therefore $H(z)=z\Psi(z)/\Phi(z)$ is a well defined holomorphic function on $\stackrel\circ{\D}$ and
\begin{equation}
H^r(z)=\Delta(z)\quad\textrm{for}\quad z \in \,\stackrel\circ{\D}.
                                                         \label{4.33}
\end{equation}

First, assume $\Delta$ to have a zero $0\neq z_0\in\C\setminus\S$. By \eqref{4.29}, $1/{\bar z}_0$ is also a zero for $\Delta$ of the same order.
We can assume $|z_0|<1$, otherwise change roles of $z_0$ and $1/{\bar z}_0$.
By \eqref{4.33}, $z_0$ is a zero of $H$. Therefore $z_0$ and $1/{\bar z}_0$ are zeros of order $\ge r$ for $\Delta=H^r$. Since $\Delta$ is a polynomial of degree at most $2r$, we see that
\begin{equation}
\Delta(z)=c^r(z-z_0)^r(z-\bar z_0^{-1})^r
                                                  \label{4.34}
\end{equation}
for some constant $c\neq0$. Thus, $z_0$ is a simple zero of $H$. Now, \eqref{4.33} and \eqref{4.34} imply that
$$
\left|{H(z)\over c(z-z_0)(z-\bar z_0^{-1})}\right|=1\quad\mbox{for}\quad z\in\,\stackrel\circ{\D}.
$$
By the maximum principle, there exists a constant $c_1\not=0$ such that
$$
H(z)=c_1(z-z_0)(z-\bar z_0^{-1}).
$$
Now, we have
$$
c_1^r(z-z_0)^r(z-\bar z_0^{-1})^r=z^rb^r(z)\quad\mbox{for}\quad z\in \S.
$$
Since the left-hand side does not vanish on $\S$, this implies the existence of a constant $c_2\in \C$ such that
$$
b(z)=c_2z^{-1}(z-z_0)(z-\bar z_0^{-1})\quad\mbox{for}\quad z\in \S.
$$
For a real function $b$, the last equation implies \eqref{4.19}.

Next, assume $0$ to be a zero of $\Delta$. Again $0$ is a zero of $H$ and then $0$ is a zero of order $\ge r$ for $\Delta$, i.e.,
$\alpha_0=\dots=\alpha_{r-1}=0$ on \eqref{4.30}. With the help of \eqref{4.31}, this implies the existence of a constant $c\neq0$ such that  $\Delta=c^rz^r$. Hence $b$ is a constant function and \eqref{4.19} holds.

Finally, assume that all zeroes of the polynomial $\Delta$ belong to $\S$ and let $z_0$ be one of them. Since $\Delta(z)=z^rb^r(z)$ for $z\in \S$, we see that $b(z_0)=0$. This implies that $z_0$ is a zero of order $\ge r$ for $\Delta$. Therefore $\Delta$ has either two distinct zeros of order $r$ or a single zero of order $2r$. Therefore there exist a constant $c$ and $z_1,z_2\in \S$ such that
$$
\Delta(z)=c(z-z_1)^r(z-z_2)^r=b^r(z)z^r\quad\mbox{for}\quad z\in \S.
$$
Since $b$ is an infinitely smooth function, this implies the existence of a constant $c_1\in \C$ such that
$$
b(z)=c_1z^{-1}(z-z_1)(z-z_2).
$$
Again, the latter equation implies \eqref{4.19} for a real function $b$.
\end{proof}

\section{The compactness theorem}

The following statement belongs to Edward \cite[Proposition 1]{E2}. The proof is presented in \cite{E0}. Since the latter paper is not easily accessible, we present a proof in Appendix.

\begin{lemma} \label{L5.1}
Given a positive function $a\in C^\infty(\S)$, there exists a function $b\in C^\infty(\S)$ which is conformally equivalent to $a$ via a conformal transformation of the disk $\D$ and such that ${\hat b}_1=0$.
\end{lemma}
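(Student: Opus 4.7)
The plan is to parametrize the orientation-preserving conformal self-maps of $\D$ (modulo rotations) by points $w\in\D$ via $\Psi_w(z)=(z-w)/(1-\bar w z)$, and find $w$ for which the first Fourier coefficient of the conformally transformed function vanishes, by a degree argument. Writing $\psi_w=\Psi_w|_\S$ and $b_w=|\psi_w'|^{-1}(a\circ\psi_w)$ (conformally equivalent to $a$ by construction), set
$$F(w):=\widehat{(b_w)}_1.$$
Then $F:\D\to\C$ is continuous, and since post-composing $\Psi_w$ with a rotation only multiplies $F(w)$ by a unimodular constant, it suffices to exhibit $w\in\D$ with $F(w)=0$.

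First I would compute $F(w)$ explicitly by the change of variable $\phi=\psi_w(\theta)$ in $F(w)=\frac{1}{2\pi}\int b_w(\theta)e^{-i\theta}d\theta$. Using $|\psi_w'|=(1-|w|^2)/|1-\bar w e^{i\theta}|^2$, $\psi_w^{-1}=\psi_{-w}$, and the identity $|1-\bar w e^{i\theta}|=(1-|w|^2)/|1+\bar w e^{i\phi}|$, one obtains after simplification
$$F(w)=\frac{1}{2\pi}\int_0^{2\pi}a(\phi)\,\frac{(1-|w|^2)^2\,e^{-i\phi}}{(1+\bar w e^{i\phi})(1+we^{-i\phi})^3}\,d\phi.$$
Next I would expand both denominator factors as geometric series in $\bar w$ and $w$, integrate termwise against the Fourier series of $a$, and collect terms by the index $n=k-j+1$. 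The polynomial identity (checked by long division)
$$(1-s)^2\sum_{j\ge 0}\binom{j+n+1}{2}s^j=\frac{1}{1-s}+\frac{(n-1)(n+2-ns)}{2}\quad(n\ge 1)$$
produces a splitting $F(w)=F_1(w)+F_2(w)$ with
$$F_1(w)=\frac{g(w)}{1-|w|^2},\quad g(w)=\sum_{n\ge 1}(-1)^{n+1}\hat a_n\,w^{n-1}+\sum_{n\le 0}(-1)^{n+1}\hat a_n\,\bar w^{1-n},$$
and $F_2$ an explicit series indexed by $n\ge 2$ which, by rapid decay of $\hat a_n$, converges uniformly on $\bar\D$ and is bounded; $g$ likewise extends continuously to $\bar\D$.

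The decisive fact is the boundary behavior of $g$: on $|w|=1$, $\bar w=1/w$ fuses the two halves into
$$g(e^{i\theta})=-e^{-i\theta}\sum_{n\in\Z}\hat a_n(-e^{i\theta})^n=-e^{-i\theta}\,a(\theta+\pi),$$
which has modulus $a(\theta+\pi)>0$ since $a$ is strictly positive. Hence $|F(w)|\to\infty$ as $|w|\to 1$, so $F$ is nonvanishing on some annulus $\{r_0\le|w|<1\}$, and for $r$ close to $1$ the map $F/|F|$ on $|w|=r$ is homotopic (via the straight-line homotopy $F_1+tF_2$) to $g/|g|$, which on $\partial\D$ equals $-\bar w$ and has winding number $-1$ about $0$. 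If $\hat a_1=0$ then $F(0)=0$ and $w=0$ works; otherwise $F(0)=\hat a_1\ne 0$, so for small $r>0$ the map $F/|F|$ on $|w|=r$ is close to the constant $\hat a_1/|\hat a_1|$ with winding number $0$. If $F$ were nonvanishing throughout $\D$, homotopy invariance in the intermediate annulus would force these two winding numbers to coincide; the contradiction produces $w_\ast\in\D$ with $F(w_\ast)=0$, and $b:=b_{w_\ast}$ is the required function.

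The main obstacle is the kernel bookkeeping that produces the decomposition $F=F_1+F_2$ and yields the clean boundary formula $g(e^{i\theta})=-e^{-i\theta}a(\theta+\pi)$; once the blow-up $F\sim g(w)/(1-|w|^2)$ with $g$ nonvanishing on $\partial\D$ is in hand, the degree argument is routine.
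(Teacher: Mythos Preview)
Your strategy is correct and coincides with the paper's: parametrize the M\"obius self-maps of $\D$, track $\widehat{(b_w)}_1$ as a continuous function of the parameter, identify its boundary asymptotics as (a unimodular multiple of) $-e^{-i\theta}a(\theta+\pi)$, and finish with a winding-number argument. The paper's execution of the asymptotic step is much shorter, however. Instead of changing variables to $\phi$, expanding both kernel factors in power series, and invoking a polynomial identity to isolate the singular piece $g(w)/(1-|w|^2)$, the paper simply considers the normalized quantity $H(w):=(1-|w|^2)\widehat{(b_w)}_1$ and keeps the integral over the original variable $\theta$; this reads
\[
H(w)=\frac{1}{2\pi}\int_0^{2\pi}a\big(\psi_w(\theta)\big)\,|1-\bar w e^{i\theta}|^{2}\,e^{-i\theta}\,d\theta,
\]
whose integrand is bounded uniformly in $(w,\theta)\in\D\times\S$. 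Since $\psi_w(\theta)\to -w$ pointwise (for $\theta\neq\arg w$) as $|w|\to1$, dominated convergence gives $H(e^{i\theta})=-e^{-i\theta}a(\theta+\pi)$ directly---exactly your $g|_{\partial\D}$---with no series bookkeeping. In short, normalizing by $(1-|w|^2)$ before analyzing the limit collapses your decomposition $F=F_1+F_2$ into a one-line computation.
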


We will need one more Edward's result \cite[Propositions 3 and 4]{E2}:

\begin{lemma} \label{L5.2}
Let $a_n\in C^\infty(\S)\ (n=1,2,\dots)$ be a sequence of positive functions such that the Steklov spectrum $\mbox{\rm Sp}(a_n)$ is independent of $n$.
Assume additionally that $(\widehat{a_n})_1=0$ for every $n$. Then
\begin{equation}
|(\widehat{a_n})_0|\le C_0
                                                  \label{5.1}
\end{equation}
and
\begin{equation}
a_n(\theta)\ge c\quad(\theta\in\R)
                                                  \label{5.2}
\end{equation}
with positive constants $c$ and $C_0$ independent of $n$.
\end{lemma}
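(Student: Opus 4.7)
The plan is to combine the standard Weyl law for $a_n\Lambda$ with Edward's formula (the $m=1$ case of \eqref{0.4}), following the line of Edward \cite[Propositions~3 and 4]{E2}. Since $a_n\Lambda$ is a positive self-adjoint first-order pseudodifferential operator on $\S$ with principal symbol $a_n(\theta)|\xi|$, Weyl's law gives $\#\{k:\lambda_k(a_n)\le\lambda\}\sim 2\lambda\,\overline{1/a_n}$ as $\lambda\to\infty$, where $\overline{1/a_n}:=\frac{1}{2\pi}\int_\S 1/a_n\,d\theta$. Because $\mbox{\rm Sp}(a_n)$ is fixed, the quantity $\overline{1/a_n}=C_*$ is a positive constant independent of $n$; similarly, every $Z_m(a_n)$ is a fixed constant.

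By Edward's formula \eqref{0.3}, $\sum_{k\ge 2}k^3|(\widehat{a_n})_k|^2\le\frac{3}{2}Z_1(a_n)$ is uniformly bounded in $n$. Since $(\widehat{a_n})_{\pm 1}=0$ by hypothesis, Cauchy--Schwarz together with $\sum_{|k|\ge 2}|k|^{-3}<\infty$ yields the uniform pointwise estimate $\|a_n-(\widehat{a_n})_0\|_{L^\infty(\S)}\le M$. Hence either $(\widehat{a_n})_0\le M$ directly, or $a_n\ge (\widehat{a_n})_0-M>0$ pointwise, in which case $C_*=\overline{1/a_n}\le 1/((\widehat{a_n})_0-M)$, forcing $(\widehat{a_n})_0\le M+1/C_*$. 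Combined with the Cauchy--Schwarz lower bound $(\widehat{a_n})_0=\overline{a_n}\ge 1/\overline{1/a_n}=1/C_*>0$, this proves \eqref{5.1}.

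For \eqref{5.2}, I would argue by contradiction: assume $\min_\theta a_{n_k}(\theta)\to 0$ along a subsequence. The uniform $H^{3/2}$-bound together with the bound on $(\widehat{a_n})_0$ makes $a_n$ uniformly bounded in $H^{3/2}(\S)$, so a further subsequence converges in $C^0(\S)$ to some $a_\infty\in H^{3/2}(\S)$ with $a_\infty\ge 0$ and $a_\infty(\theta_*)=0$. The goal is to derive $\int_\S 1/a_\infty=+\infty$, which would contradict Fatou's lemma applied to $\int 1/a_{n_k}=2\pi C_*$. Heuristically, if $a_\infty(\theta)\asymp c|\theta-\theta_*|^\alpha$ near $\theta_*$, the Fourier coefficients of $a_\infty$ decay like $|k|^{-(1+\alpha)}$, so the membership $a_\infty\in H^{3/2}$ forces $\alpha>1$; then $1/a_\infty\ge c|\theta-\theta_*|^{-\alpha}$ with $\alpha>1$ is non-integrable.

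The main obstacle is making the last step rigorous for arbitrary nonnegative $a_\infty\in H^{3/2}(\S)$ vanishing at a point, since pure power-law asymptotics need not hold and the naive Sobolev embedding $H^{3/2}\hookrightarrow C^{1-\epsilon}$ only gives $a_\infty(\theta)\le C|\theta-\theta_*|^{1-\epsilon}$, whose reciprocal is integrable near $\theta_*$. One must therefore either control the sublevel-set measures $|\{a_\infty<t\}|$ via Besov-type embedding arguments for nonnegative $H^{3/2}$-functions, or, following Edward, invoke additional spectral invariants such as $\zeta_{a_n}(-1)$ and $\zeta_{a_n}(-3)$: by the heat-trace expansion for $a_n\Lambda$, these equal local integrals on $\S$ whose integrands contain $a_n$ in the denominator, so their constancy directly rules out $a_n$ approaching zero uniformly in $n$.
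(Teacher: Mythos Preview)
The paper does not prove this lemma; it is quoted from \cite[Propositions~3 and 4]{E2}, so there is no in-paper argument to compare against. Your outline for \eqref{5.1} is correct and is essentially Edward's: the leading Weyl coefficient of $a_n\Lambda$ equals $\overline{1/a_n}$ and is therefore a spectral invariant $C_*$; Edward's formula \eqref{0.3} together with $(\widehat{a_n})_{\pm 1}=0$ gives a uniform bound $\|a_n-(\widehat{a_n})_0\|_{L^\infty}\le M$; combining the two yields $(\widehat{a_n})_0\le M+1/C_*$.

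For \eqref{5.2} you correctly locate the obstruction: the embedding $H^{3/2}\hookrightarrow C^{0,1-\epsilon}$ alone only gives $1/a_\infty\ge c|\theta-\theta_*|^{-(1-\epsilon)}$, which is integrable. But your Fatou route \emph{can} be closed without extra spectral invariants, via the borderline (log-Lipschitz) embedding of $H^{3/2}(\S)$: splitting the Fourier series of $f\in H^{3/2}$ at frequency $|k|\sim 1/|\theta-\theta_*|$ and applying Cauchy--Schwarz on each piece gives, for small $|\theta-\theta_*|$,
\[
|f(\theta)-f(\theta_*)|\le C\|f\|_{H^{3/2}}\,|\theta-\theta_*|\,\sqrt{\log\big(1/|\theta-\theta_*|\big)}.
\]
Applied to $a_\infty\ge 0$ with $a_\infty(\theta_*)=0$, this yields $1/a_\infty(\theta)\ge c\,|\theta-\theta_*|^{-1}\big(\log(1/|\theta-\theta_*|)\big)^{-1/2}$ near $\theta_*$, whose integral diverges (substitute $u=\log(1/|\theta-\theta_*|)$), contradicting $\int_\S 1/a_{n_k}=2\pi C_*$ via Fatou. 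So your first route actually works once this sharp embedding is used. The alternative you sketch at the end---using the spectrally determined values $\zeta_{a_n}(-1)$ and $\zeta_{a_n}(-3)$, which are local integrals with $a_n$ in the denominator---is indeed Edward's original argument in \cite{E2}, as the present paper notes in its Introduction.
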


Let us remind that, for every real $s$, the Hilbert space $H^s({\mathbb S})$ is the completion of $C^\infty({\mathbb S})$ with respect to the norm
\begin{equation}
\|u\|_{H^s({\mathbb S})}^2=\sum\limits_{n\in\Z}(1+|n|^{2s})|{\hat u}_n|^2.
                                                  \label{5.2a}
\end{equation}

\begin{lemma} \label{L5.3}
Let $a_n\in C^\infty(\S)\ (n=1,2,\dots)$ be a sequence of positive functions such that the Steklov spectrum $\mbox{\rm Sp}(a_n)$ is independent of $n$.
Assume additionally that $(\widehat{a_n})_1=0$ for every $n$. Then the sequence is bounded in $H^s({\mathbb S})$ for every $s\in\R$, i.e.,
\begin{equation}
\|a_n\|_{H^s({\mathbb S})}\le C_s
                                                  \label{5.3}
\end{equation}
with a constant $C_s$ independent of $n$.
\end{lemma}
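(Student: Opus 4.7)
The plan is to exploit Theorem~\ref{Th0.1} for every $m\ge 1$, using the fact that each $Z_m(a_n)$ is determined by $\mbox{\rm Sp}(a_n)$ (Edward \cite{E}) and is therefore constant in $n$. Applied with $m=1$, the theorem gives $\sum_{k\ge 2}k^3|(\widehat{a_n})_k|^2\le Z_1(a_1)/c_1$. Combined with the hypothesis $(\widehat{a_n})_1=0$, the uniform bound on $|(\widehat{a_n})_0|$ from Lemma~\ref{L5.2}, and $|(\widehat{a_n})_{-k}|=|(\widehat{a_n})_k|$ (realness of $a_n$), this yields a uniform bound on $\|a_n\|_{H^{3/2}(\S)}$, and hence on $\|a_n\|_{L^\infty(\S)}$ by the 1D Sobolev embedding $H^{3/2}(\S)\hookrightarrow L^\infty(\S)$.

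For each integer $m\ge 1$, Theorem~\ref{Th0.1} applied to $b=a_n^m$ then gives
$\sum_{|k|>m}|k|^{2m+1}|(\widehat{a_n^m})_k|^2\le 2Z_m(a_1)/c_m$,
while $|(\widehat{a_n^m})_k|\le\|a_n\|_{L^\infty}^m$ for $|k|\le m$. Therefore $a_n^m$ is uniformly bounded in $H^{m+1/2}(\S)$ for every $m\ge 1$.

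The last step is to pass from bounds on $a_n^m$ to bounds on $a_n$ itself, by induction on the integer $k\ge 1$ that $\sup_n\|a_n\|_{H^k(\S)}<\infty$; monotonicity $H^t\subset H^s$ for $s\le t$ then gives the conclusion for all real $s$. The base $k=1$ is already contained in the first step. For the inductive step $k\mapsto k+1$, choose $m=k+1$, so that $D^{k+1}(a_n^{k+1})$ is uniformly bounded in $L^2$. Fa\`a di Bruno's formula gives
\[
D^{k+1}(a_n^{k+1})=(k+1)\,a_n^{k}\,D^{k+1}a_n+R_n,
\]
where $R_n$ is a finite sum of terms $C\,a_n^{k+1-\ell}\prod_{i=1}^{\ell}D^{j_i}a_n$ with $\ell\ge 2$, $j_i\ge 1$, $\sum j_i=k+1$ (so $j_i\le k$). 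Using the inductive $H^k$-bound: for $k\ge 2$ the embedding $H^k\hookrightarrow C^{k-1}$ places $D^ja_n\in L^\infty$ for $j\le k-1$, and the only terms with $j_i=k$ are forced to have $\ell=2$ and the other index equal to $1$, so those terms are estimated in $L^2\cdot L^\infty$; for $k=1$ the only contribution to $R_n$ is $(Da_n)^2$, controlled via $Da_n\in H^{1/2}(\S)\hookrightarrow L^4(\S)$. Thus $R_n$ is uniformly bounded in $L^2$, and since $a_n\ge c>0$ by Lemma~\ref{L5.2}, a uniform $L^2$-bound on $D^{k+1}a_n$ follows. The main obstacle is organizing this inversion of the $m$th-power map in Sobolev scales; the most delicate moment is the first inductive step $k=1\mapsto 2$, which requires the finer embedding $H^{1/2}\hookrightarrow L^4$ rather than the more comfortable $H^k\hookrightarrow C^{k-1}$ available once $k\ge 2$.
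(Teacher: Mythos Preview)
Your argument is correct. The first two stages---bounding $\|a_n\|_{H^{3/2}}$ via $Z_1$, then bounding $\|a_n^m\|_{H^{m+1/2}}$ via Theorem~\ref{Th0.1}---match the paper exactly. The difference is in the final step, recovering bounds on $a_n$ from bounds on $a_n^m$.

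The paper does this in one stroke rather than by induction: since $0<c\le a_n\le C$, one has the norm equivalence $\|a_n\|_{H^m}\asymp\|\log a_n\|_{H^m}$ (and likewise for $a_n^m$), and then the identity $\log a_n=\frac{1}{m}\log(a_n^m)$ immediately transfers the $H^m$-bound from $a_n^m$ to $a_n$. This avoids any inductive bookkeeping, though of course the norm equivalence itself is a Moser-type composition estimate whose proof involves the same Leibniz/Fa\`a di Bruno expansions you wrote out. Your approach, by contrast, is more explicit and self-contained: it isolates the highest-order term $(k+1)a_n^k D^{k+1}a_n$ directly and handles the remainder by Sobolev embeddings, with the $H^{1/2}\hookrightarrow L^4$ step at $k=1$ being the only mildly delicate point. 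Either route works; the logarithm trick is shorter to state, while yours makes every estimate visible.
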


\begin{proof}
Zeta-invariants $Z_m(a_n)$ are independent of $n$.
By Edward's formula \eqref{0.3},
$$
Z_1(a_n)=\frac{2}{3}\sum\limits_{k\ge2}(k^3-k)|(\widehat{a_n})_k|^2\ge
\frac{4}{9}\sum\limits_{k\ge2}(1+k^3)|(\widehat{a_n})_k|^2
=\frac{2}{9}\Big(\|a_n\|_{H^{3/2}({\mathbb S})}^2-|(\widehat{a_n})_0|^2\Big).
$$
We have used that $(\widehat{a_n})_1=0$. With the help of \eqref{5.1}, this implies
$$
\|a_n\|_{H^{3/2}({\mathbb S})}^2\le \frac{9}{2}Z_1(a_n)+|(\widehat{a_n})_0|^2\le\frac{9}{2}Z_1(a_n)+C_0^2.
$$
Thus, the sequence $a_n$ is uniformly bounded in $H^{3/2}({\mathbb S})$.
Since $H^{3/2}({\mathbb S})\subset C(\S)$, we have in particular $a_n(\theta)\le C\ (\theta\in\R)$ with a constant $C$ independent of $n$.
Combine this with \eqref{5.2}
\begin{equation}
0<c\le a_n(\theta)\le C\quad(\theta\in\R).
                                                  \label{5.5}
\end{equation}

Now, we prove the statement: for every integer $m\ge1$, the sequence $a_n^m\ (n=1,2,\dots)$ is uniformly bounded in $H^{m+1/2}({\mathbb S})$. Indeed, \eqref{5.5} gives
\begin{equation}
c^m\le a^m_n(\theta)\le C^m\quad(\theta\in\R).
                                                  \label{5.7}
\end{equation}
This implies
\begin{equation}
|(\widehat{a^m_n})_k|=\left|\frac{1}{2\pi}\int\limits_0^{2\pi}e^{-ik\theta}a^m_n(\theta)\,d\theta\right|\leq C^m.
                                                  \label{5.8}
\end{equation}
By Theorem \ref{Th0.1},
$$
\sum\limits_{k=m+1}^\infty k^{2m+1}|(\widehat{a^m_n})_k|^2\le c^{-1}_{m}Z_{m}(a_n).
$$
Therefore
\begin{equation}
\sum\limits_{k=m+1}^\infty (1+k^{2m+1})|(\widehat{a^m_n})_k|^2\le 2c^{-1}_{m}Z_{m}(a_n).
                                                  \label{5.9}
\end{equation}
Estimates \eqref{5.8} and  \eqref{5.9} imply
$$
\begin{aligned}
\|a^m_n\|_{H^{m+1/2}({\mathbb S})}^2&=\sum\limits_{|k|\le m}(1+|k|^{2m+1})|(\widehat{a^m_n})_k|^2+2\sum\limits_{k\ge m+1}(1+|k|^{2m+1})|(\widehat{a^m_n})_k|^2\\
&\le (2m+1)\big(1+m^{2m+1}\big)C^{2m}+4c^{-1}_mZ_m(a_n).
\end{aligned}
$$
Thus,
$$
\|a^m_n\|_{H^{m+1/2}({\mathbb S})}\le C_m,
$$
where $C_m^2=(2m+1)\big(1+m^{2m+1}\big)C^{2m}+4c^{-1}_mZ_m(a_n)$ is independent of $n$.
Since $\|\cdot\|_{H^{m}({\mathbb S})}\le\|\cdot\|_{H^{m+1/2}({\mathbb S})}$, we have
\begin{equation}
\|a^m_n\|_{H^{m}({\mathbb S})}\le C_m\quad(m=1,2,\dots).
                                                  \label{5.10}
\end{equation}

For an integer $s=m$, norm \eqref{5.2a} can be equivalently written as
$$
\|u\|^2_{H^{m}({\mathbb S})}=\sum\limits_{k=0}^m\|D^k u\|^2_{L^2({\mathbb S})}.
$$
On using this definition and estimates \eqref{5.5}, one easily proves the equivalence of the norms $\|a_n\|_{H^{m}({\mathbb S})}$ and $\|\log(a_n)\|_{H^{m}({\mathbb S})}$, i.e., the validity of estimates
\begin{equation}
\frac{1}{C'_m}\|a_n\|_{H^{m}({\mathbb S})}\le \|\log(a_n)\|_{H^{m}({\mathbb S})}\le C'_m\|a_n\|_{H^{m}({\mathbb S})}
                                                  \label{5.11}
\end{equation}
with a constant $C'_m$ independent of $n$. In view of \eqref{5.7}, the same is true for $a^m_n$
\begin{equation}
\frac{1}{C''_m}\|a^m_n\|_{H^{m}({\mathbb S})}\le \|\log(a^m_n)\|_{H^{m}({\mathbb S})}\le C''_m\|a^m_n\|_{H^{m}({\mathbb S})}.
                                                  \label{5.12}
\end{equation}

Finally, we prove the uniform boundedness of the sequence $a_n$ in $H^{m}({\mathbb S})$. Since $\log(a_n)=\frac{1}{m}\log(a^m_n)$, estimates \eqref{5.10}--\eqref{5.12} give
$$
\begin{aligned}
\|a_n\|_{H^{m}({\mathbb S})}\le C'_m\|\log(a_n)\|_{H^{m}({\mathbb S})}&=\frac{C'_m}{m}\|\log(a^m_n)\|_{H^{m}({\mathbb S})}\\
&\le\frac{C'_mC''_m}{m}\|a^m_n\|_{H^{m}({\mathbb S})}\le\frac{C_mC'_mC''_m}{m}.
\end{aligned}
$$
Thus, the sequence $a_n\ (n=1,2,\dots)$ is uniformly bounded in $H^{m}({\mathbb S})$ for every integer $m$ and hence in $H^{s}({\mathbb S})$ for every real $s$.
\end{proof}

\begin{proof}[Proof of Theorem \ref{Th0.3}.]
Let a sequence $a_n\in C^\infty(\S)\ (n=1,2,\dots)$ of positive functions satisfy hypotheses of Theorem \ref{Th0.3}. With the help of Lemma \ref{L5.1}, we can assume without loss of generality that $(\widehat{a_n})_1=0$ for every $n$. Therefore estimates \eqref{5.1} and \eqref{5.2} are valid as well as estimate \eqref{5.3} is valid for every $s\in\R$. Since the embedding $H^1(\S)\subset C(\S)$ is compact, we can choose a subsequence converging in $C(\S)$. The limit function
$a\in C(\S)$ satisfy
$$
a(\theta)\ge c\quad(\theta\in\R)
$$
as follows from \eqref{5.2}. Since the embedding $H^1(\S)\subset H^2(\S)$ is compact, we can choose a sub-subsequence converging to $a$ in $H^1(\S)$, and so on.
On using the classical trick of choosing the diagonal sequence, we obtain a subsequence $a_{n_k}\ (k=1,2,\dots)$ which converges to $a$ in $H^s(\S)$ for every
$s\in\R$. In other words, $a_{n_k}$ converges to $a$ in $C^\infty(\S)$. In particular, $a\in C^\infty(\S)$.
\end{proof}

\bigskip

Finally, we give an interpretation of Theorem \ref{Th0.3} in terms of Steklov isospectral families of planar domains. Let $\Omega_n\ (n=1,2,\dots)$ be a sequence of smooth bounded simply connected (probably multisheet) planar domains. We say that the sequence converges in the $C^\infty_{hol}(\D)$-topology if, for every $n$, there exists a biholomorphism $\Phi_n:\D\rightarrow\Omega_n$ such that the sequence $\Phi_n$ converges to some $\Phi:\D\rightarrow{\R}^2$ in the $C^\infty(\D)$-topology; the set $\Phi(\D)$ is the limit of the sequence.

\begin{theorem} \label{Th5.1}
Let $\Omega_n\ (n=1,2,\dots)$ be a sequence of smooth bounded simply connected (probably multisheet) planar domains. Assume that the Steklov spectrum $\mbox{\rm Sp}(\Omega_n)$ is independent of $n$. Then there exists a subsequence $\Omega_{n_k}\ (k=1,2,\dots)$ such that, for appropriately chosen isometries $I_k:{\R}^2\rightarrow{\R}^2$, the sequence $I_k(\Omega_{n_k})$ converges in the $C^\infty_{hol}(\D)$-topology to some smooth bounded simply connected multisheet planar domain.
\end{theorem}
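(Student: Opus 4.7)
The plan is to reduce Theorem~\ref{Th5.1} to Theorem~\ref{Th0.3} via the standard correspondence between domains and boundary functions. For each $n$, fix a biholomorphism $\Phi_n:\D\to\Omega_n$; since $\pa\Omega_n$ is $C^\infty$-smooth, $\Phi_n$ extends to a $C^\infty$-diffeomorphism $\D\to\overline{\Omega_n}$, so $a_n(e^{i\theta}):=|\Phi_n'(e^{i\theta})|^{-1}$ is a positive element of $C^\infty(\S)$ with $\mbox{Sp}(a_n)=\mbox{Sp}(\Omega_n)$. By Theorem~\ref{Th0.3}, extract a subsequence $a_{n_k}$ and conformal (or anticonformal) transformations $\Psi_k$ of $\D$ so that $b_k=|\psi_k'|^{-1}(a_{n_k}\circ\psi_k)$ converges in $C^\infty(\S)$ to a positive $b$. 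The anticonformal case can be absorbed into a reflection of $\R^2$ applied to $\Omega_{n_k}$, so I assume henceforth that all $\Psi_k$ are conformal. Setting $\widetilde\Phi_k=\Phi_{n_k}\circ\Psi_k:\D\to\Omega_{n_k}$, the identity $|\Psi_k'(e^{i\theta})|=|\psi_k'(\theta)|$ and the chain rule give $|\widetilde\Phi_k'|^{-1}|_\S=b_k$, hence $|\widetilde\Phi_k'||_\S\to 1/b$ in $C^\infty(\S)$.

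Next, I normalize. Choose a translation $T_k$ of $\R^2$ sending $\widetilde\Phi_k(0)$ to the origin and a rotation $R_k$ of $\R^2$ such that $(R_kT_k\widetilde\Phi_k)'(0)>0$; set $I_k=R_k\circ T_k$ and $\Phi_k^*=I_k\circ\widetilde\Phi_k:\D\to I_k(\Omega_{n_k})$. Then $\Phi_k^*(0)=0$, $(\Phi_k^*)'(0)>0$, and $|(\Phi_k^*)'|=|\widetilde\Phi_k'|$. Now $\log|(\Phi_k^*)'|$ is harmonic on $\stackrel\circ{\D}$ and smooth up to $\S$, with boundary values $-\log b_k\to-\log b$ in $C^\infty(\S)$. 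Continuity of the Poisson integral as an operator $C^\infty(\S)\to C^\infty(\D)$ yields $\log|(\Phi_k^*)'|\to u$ in $C^\infty(\D)$, where $u$ is the Poisson extension of $-\log b$. The harmonic conjugate $v_k$ of $\log|(\Phi_k^*)'|$, normalized by $v_k(0)=0$ (compatible with $\arg(\Phi_k^*)'(0)=0$), is also continuous in $C^\infty$, so $\log(\Phi_k^*)'=\log|(\Phi_k^*)'|+iv_k\to g:=u+iv$ in $C^\infty(\D)$. Exponentiating (continuous on the Fr\'echet algebra $C^\infty(\D)$) and integrating from $0$ yields $\Phi_k^*\to\Phi$ in $C^\infty(\D)$ with $\Phi(z)=\int_0^z\exp(g(w))\,dw$. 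Since $\Phi'=\exp(g)$ never vanishes, $\Phi$ is a holomorphic local diffeomorphism on $\D$, and $\Phi(\D)$ with this parametrization is the desired smooth bounded simply connected multisheet limit domain of $I_k(\Omega_{n_k})$.

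The hard part is ensuring that convergence is preserved $C^\infty$-\emph{up to the boundary} of $\D$ rather than merely on compact subsets of $\stackrel\circ{\D}$. This reduces to the classical statement that the Poisson integral and harmonic conjugation (with normalization at $0$) are continuous operators $C^\infty(\S)\to C^\infty(\D)$; this can be seen by noting that the Fourier coefficients of the Poisson extension are multiplied by $r^{|n|}$-factors and that norms \eqref{5.2a} control uniform convergence of all derivatives on $\D$ after passing through the Poisson kernel. Granted this, the subsequent exponentiation and integration from $0$ are routine continuous operations in the Fr\'echet topology of $C^\infty(\D)$, and the whole argument closes.
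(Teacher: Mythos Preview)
Your proof is correct and follows essentially the same route as the paper: pass to boundary functions $a_n=|\Phi_n'|^{-1}|_\S$, use the $C^\infty(\S)$-compactness (you invoke Theorem~\ref{Th0.3} as a black box, whereas the paper reassembles it inline from Lemmas~\ref{L5.1}--\ref{L5.3}), then recover $\Phi_k$ up to isometry by harmonically extending $\log|\Phi_k'|$ and taking the conjugate. The only cosmetic differences are that you normalize explicitly via $\Phi_k^*(0)=0$, $(\Phi_k^*)'(0)>0$ before passing to the limit (the paper instead extracts the constants $\alpha_n,\beta_n$ a posteriori), and you handle a possible anticonformal $\Psi_k$ by a reflection (unnecessary in the paper's version since Lemma~\ref{L5.1} uses only conformal maps).
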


\begin{proof}
By the Riemann theorem, there exists a biholomorphism $\Phi_n:\D\rightarrow\Omega_n$ for every $n$. Define positive functions $a_n\in C^\infty(\S)$ by $a_n(z)=|\Phi'_n(z)|^{-1}$ for $z\in\S$. The Steklov spectrum $\mbox{\rm Sp}(a_n)=\mbox{\rm Sp}(\Omega_n)$ is independent of $n$.
By Lemma \ref{L5.1}, for every $n$, there exists a  conformal transformation $\Psi_n:\D\rightarrow\D$ such that the function
$b_n(z)=a_n\big(\Psi_n(z)\big)|\Psi'_n(z)|^{-1}$ satisfies $(\widehat{b_n})_1=0$.
Then $b_n(z)=|{\tilde\Phi}'_n(z)|^{-1}\ (z\in\S)$ for the diffeomorphism $\tilde\Phi_n=\Phi_n\circ\Psi_n:\D\rightarrow\Omega_n$.

All $\Phi_n$ are biholomorphisms. In order to simplify notations, we assume without loss of generality that all $\Psi_n$ are identities, i.e., that $\tilde\Phi_n=\Phi_n$.

Thus, for every $n$, we have a biholomorphism $\Phi_n:\D\rightarrow\Omega_n$ such that the function $a_n(z)=|\Phi'_n(z)|^{-1}\ (z\in\S)$ satisfies $(\widehat{a_n})_1=0$ and $\mbox{\rm Sp}(a_n)$ is independent of $n$. By Lemma \ref{L5.2} (see also \eqref{5.5}),
\begin{equation}
0<c\le a_n(\theta)\le C\quad(\theta\in\R)
                                                  \label{5.13}
\end{equation}
with constants $c$ and $C$ independent of $n$. By Lemma \ref{L5.3}, the sequence $a_n\ (n=1,2,\dots)$ contains a subsequence converging in $C^\infty(\S)$. Without loss of generality, we assume that the sequence $a_n$ itself to converge in $C^\infty(\S)$ to some function $a\in C^\infty(\S)$. The limit function also satisfies
\begin{equation}
c\le a(\theta)\le C\quad(\theta\in\R)
                                                  \label{5.14}
\end{equation}
as follows from \eqref{5.13}.

For every $n$, the function $\Phi_n$ is holomorphic on $\stackrel\circ{\D}$, continuous with all its derivatives on $\D$, and satisfies the boundary condition
$$
|\Phi'_n(z)|=a_n^{-1}(z)\quad(z\in\S).
$$
The derivative $\Phi'_n(z)$ does not vanish in $\D$ because $\Phi_n$ is a biholomorphism. Therefore $\log\Phi'_n(z)$ is a well defined holomorphic function on $\stackrel\circ{\D}$ whose real part $u_n(z)=\Re(\log\Phi'_n(z))$ solves the Dirichlet problem
$$
\Delta u_n=0\quad\mbox{in}\quad\stackrel\circ{\D},\quad u_n|_{\S}=\log(a_n^{-1}).
$$
The sequence $a_n^{-1}$ converges to $a^{-1}$ in $C^\infty(\S)$. Applying standard Sobolev stability estimates for an elliptic boundary value problem, we see that the sequence $u_n$ converges in the $C^\infty(\D)$-topology to a function $u\in C^\infty(\D)$ which solves the Dirichlet problem
$$
\Delta u=0\quad\mbox{in}\quad\stackrel\circ{\D},\quad u|_{\S}=\log(a^{-1}).
$$
The function $u$ satisfies
\begin{equation}
-\log(C)\le u(z)\le -\log(c)\quad(z\in\D)
                                                  \label{5.16}
\end{equation}
as follows from \eqref{5.14} with the help of the maximum principle for harmonic functions.

Now, we address the equation
\begin{equation}
\Re(\log\Phi'_n(z))=u_n(z).
                                                  \label{5.17}
\end{equation}
A holomorphic function $f(z)$ is defined by its real part uniquely up to a pure imaginary constant. Moreover, the dependence of $f$ on $\Re(f)$ is continuous in corresponding Sobolev norms. Therefore \eqref{5.17} implies with the help of \eqref{5.16} the existence of a function $\Phi\in C^\infty(\D)$ which is holomorphic
in $\stackrel\circ{\D}$ and such that the sequence
\begin{equation}
\alpha_n\Phi_n+\beta_n\rightarrow\Phi\quad\mbox{in}\quad C^\infty(\D)
                                                  \label{5.18}
\end{equation}
with appropriately chosen constants $\alpha_n\in\C,\ |\alpha_n|=1$ and $\beta_n\in\C$. The derivative $\Phi'(z)$ does not vanish in $\D$ because $|\Phi'(z)|=\exp\big(u(z)\big)$. Therefore $\Omega=\Phi(\D)$ is a smooth simply connected (probably multisheet) planar domain. Define the isometry $I_n:{\R}^2\rightarrow{\R}^2$ by $I_n(z)=\alpha_nz+\beta_n$. Then \eqref{5.18} can be rewritten as
$$
I_n\circ\Phi_n\rightarrow\Phi\quad\mbox{in}\quad C^\infty(\D).
$$
This means that the sequence of domains $I_n(\Omega_n)=(I_n\circ\Phi_n)(\D)$ converges to $\Omega=\Phi(\D)$ in the $C^\infty_{hol}(\D)$-topology.
\end{proof}

\appendix
\section{Proofs of Lemmas \ref{L1.1} and \ref{L5.1}}

We will need the following
\begin{lemma} \label{LA.1}
Let $A$ and $B$ be pseudodifferential operators on $\S$. If $B$ is a smoothing operator, then $\T[AB]=\T[BA]$.
\end{lemma}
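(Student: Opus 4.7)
The plan is to evaluate both traces in the orthonormal Fourier basis $\{e^{in\theta}\}_{n\in\Z}$ of $L^2(\S)$, and reduce the identity $\T[AB]=\T[BA]$ to the legitimacy of an interchange of the order of summation in a double series of matrix entries. Write $A_{mn}=\langle Ae^{in\theta},e^{im\theta}\rangle$ and similarly $B_{mn}$.

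Both $AB$ and $BA$ are smoothing, hence trace-class: indeed, $B$ extends to a continuous map $\mathcal{D}'(\S)\to C^\infty(\S)$, while $A$ is continuous $\mathcal{D}'(\S)\to\mathcal{D}'(\S)$ and preserves $C^\infty(\S)$, so both compositions send $\mathcal{D}'(\S)$ into $C^\infty(\S)$. Expanding $Be^{ik\theta}=\sum_n B_{nk}e^{in\theta}$ in $L^2$, applying $A$, and pairing against $e^{ik\theta}$ gives
\[
\T[AB]=\sum_k\langle ABe^{ik\theta},e^{ik\theta}\rangle=\sum_k\sum_n A_{kn}B_{nk}.
\]
Expanding instead $Ae^{ik\theta}=\sum_n A_{nk}e^{in\theta}$ first and using continuity of $B$ gives
\[
\T[BA]=\sum_k\sum_n A_{nk}B_{kn}=\sum_n\sum_k A_{kn}B_{nk},
\]
the second equality by relabeling $(k,n)\leftrightarrow(n,k)$. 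Therefore the lemma is equivalent to Fubini's theorem applied to the double series $\sum_{k,n} A_{kn}B_{nk}$.

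The decisive step is an absolute-convergence estimate. Since $B$ is smoothing, both $B$ and $B^*$ send $\mathcal{D}'(\S)$ into $C^\infty(\S)$, which translates via the Sobolev-norm description into rapid joint decay
\[
|B_{nk}|\le C_N(1+|n|+|k|)^{-N}\quad\text{for every }N\in\N.
\]
On the other hand, if $A$ has order $d$, the standard bound $A\colon H^{s+d}(\S)\to H^s(\S)$ applied to $e^{in\theta}$ together with Parseval yields
\[
|A_{kn}|\le C_s(1+|k|)^{-s}(1+|n|)^{s+d}\quad\text{for every integer }s\ge 0.
\]
Choosing, for instance, $s=2$ and $N\ge d+5$, the sum $\sum_{k,n}|A_{kn}||B_{nk}|$ is dominated by a convergent double series, Fubini applies, and $\T[AB]=\T[BA]$ follows. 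The only real obstacle is this matrix-entry bound for $A$; once it is in place, everything else is formal manipulation of the Fourier expansion.
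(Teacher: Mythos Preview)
Your proof is correct and takes a genuinely different route from the paper's. The paper never works with matrix entries or invokes Fubini. Instead it introduces the elliptic regularizer $(\Lambda+P_0)^{\mu}$, where $\mu$ is the order of $A$, and factors
\[
AB=\big(A(\Lambda+P_0)^{-\mu}\big)\cdot\big((\Lambda+P_0)^{\mu}B\big),
\]
so that the first factor is bounded on $L^2(\S)$ and the second is smoothing (hence trace class). The identity $\T[CD]=\T[DC]$ for $C$ bounded and $D$ trace class (quoted from Simon's monograph) then gives $\T[AB]=\T\big[(\Lambda+P_0)^{\mu}BA(\Lambda+P_0)^{-\mu}\big]$, and a one-line diagonal computation in the trigonometric basis shows this equals $\T[BA]$.

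The trade-off: the paper's argument is shorter and more conceptual but imports a nontrivial black box from trace-ideal theory; your argument is fully self-contained and elementary, at the price of writing down explicit Sobolev bounds on the matrix entries of $A$ and $B$. Your estimate $|A_{kn}|\le C_s(1+|k|)^{-s}(1+|n|)^{s+d}$ is exactly the content of the $H^{s+d}\to H^s$ mapping property tested on exponentials, and combined with the rapid decay of $B_{nk}$ it does give absolute summability of $\sum_{k,n}A_{kn}B_{nk}$ for $N$ large enough, so Fubini is legitimate.
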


\begin{proof}
Denote by $P_0$ the bounded operator on $L^2(\S)$ defined as the orthogonal projector onto the one-dimensional subspace of constant functions: $P_0(f)=\hat f_0$ for $f\in L^2(\S)$. Then $\Lambda+P_0$ is a first order elliptic positive pseudodifferential operator. For every $\mu\in{\mathbb R}$, the power $(\Lambda+P_0)^\mu$ is a well defined pseudodifferential operator of order $\mu$.

Let $\mu$ be the order of the pseudodifferential operator $A$. Then $A(\Lambda+P_0)^{-\mu}$ is a bounded operator on $L^2(\S)$ and $(\Lambda+P_0)^\mu B$ is a smoothing operator. Then by the classical property of the trace \cite[Theorem 3.1]{Si}
$$
\T[AB]=\T[A(\Lambda+P_0)^{-\mu}(\Lambda+P_0)^\mu B]=\T[(\Lambda+P_0)^\mu B A(\Lambda+P_0)^{-\mu}].
$$
We compute the right-hand side on using the trigonometric basis:
$(\Lambda+P_0)e^{in\theta}=(|n|+1)e^{in\theta}$ and
$$
\l (\Lambda+P_0)^\mu B A(\Lambda+P_0)^{-\mu}e^{in\theta},e^{in\theta}\r=\l B A e^{in\theta},e^{in\theta}\r.
$$
Two last formulas give $\T[AB]=\T[(\Lambda+P_0)^\mu B A(\Lambda+P_0)^{-\mu}]=\T[BA]$.
\end{proof}

\begin{proof}[Proof of Lemma \ref{L1.1}.]
It suffices to prove that
\begin{equation}
\T\big[(L^{j_1}H)(L^{j_2}H)\dots(L^{j_{2s}}H)-(LH)^{2m}\big]=\T\big[(L^{j_2}H)\dots(L^{j_{2s}}H)(L^{j_1}H)-(LH)^{2m}\big]
                                          \label{A.2}
\end{equation}
for a sequence of non-negative integers $(j_1,\dots,j_{2s})$ satisfying $j_1+\dots+j_{2s}=2m$. We prove this by induction in $s$.

First let $s=1$ and $0\le j\le 2m$. Then
\begin{equation}
\T\big(L^jHL^{2m-j}H-(LH)^{2m}\big)=\T\big([H,L^{2m-j}]HL^j\big)+\T\big(L^{2m}-(LH)^{2m}\big).
                                          \label{A.3}
\end{equation}
Applying Lemma \ref{LA.1} with $A=L^j$, $B=[H,L^{2m-j}]H$ and using $H^2=I$, we have
$$
\begin{aligned}
\T\big(L^j[H,L^{2m-j}]H\big)&=\T\big([H,L^{2m-j}]HL^j\big)\\
&=\T\big(HL^{2m-j}HL^j-L^{2m}\big)=\T\big(L^{2m-j}HL^jH-L^{2m}\big).
\end{aligned}
$$
Substituting this value into \eqref{A.3}, we obtain \eqref{A.2} for $s=1$.

Assume \eqref{A.2} to be valid for some $s\geq1$. Let $j_1+\dots+j_{2s+2}=2m$. Then
\begin{equation}
\begin{aligned}
\T\Big((L^{j_1}H)\dots(L^{j_{2s+2}}H)&-(LH)^{2m}\Big)
=\T\Big((L^{j_1}H)\dots(L^{j_{2s}}H)[L^{j_{2s+1}},H]L^{j_{2s+2}}H\Big)\\
&+\T\Big((L^{j_1}H)\dots(L^{j_{2s-1}}H)L^{j_{2s}+j_{2s+1}+j_{2s+2}}H-(LH)^{2m}\Big).
\end{aligned}
                                          \label{A.4}
\end{equation}
Applying Lemma \ref{LA.1} with $A=L^{j_1}H$ and $B=(L^{j_2}H)\dots(L^{j_{2s}}H)[L^{j_{2s+1}},H]L^{j_{2s+2}}H$, we obtain
\begin{equation}
\begin{aligned}
&\T\Big((L^{j_1}H)\dots(L^{j_{2s}}H)[L^{j_{2s+1}},H]L^{j_{2s+2}}H\Big)\\
&=\T\Big((L^{j_2}H)\dots(L^{j_{2s}}H)[L^{j_{2s+1}},H]L^{j_{2s+2}}H(L^{j_1}H)\Big)\\
&=\T\Big((L^{j_2}H)\dots(L^{j_{2s+2}}H)(L^{j_1}H)
-(L^{j_2}H)\dots(L^{j_{2s-1}}H)L^{j_{2s}+j_{2s+1}+j_{2s+2}}H(L^{j_1}H)\Big).
\end{aligned}
                                          \label{A.5}
\end{equation}
By the induction hypothesis,
\begin{equation}
\begin{aligned}
\T\Big((L^{j_1}H)\dots&(L^{j_{2s-1}}H)L^{j_{2s}+j_{2s+1}+j_{2s+2}}H-(LH)^{2m}\Big)\\
&=\T\Big((L^{j_2}H)\dots(L^{j_{2s-1}}H)L^{j_{2s}+j_{2s+1}+j_{2s+2}}H(L^{j_1}H)-(LH)^{2m}\Big).
\end{aligned}
                                          \label{A.6}
\end{equation}
Substituting \eqref{A.5} and \eqref{A.6} into \eqref{A.4}, we obtain
$$
\begin{aligned}
\T\big[(L^{j_1}H)\dots(L^{j_{2s+2}}H)-(LH)^{2m}\big]
=\T\big[(L^{j_2}H)\dots(L^{j_{2s+2}}H)(L^{j_1}H)-(LH)^{2m}\big].
\end{aligned}
$$
\end{proof}

\begin{proof}[Proof of Lemma \ref{L5.1} {\rm (by \cite{Sz} and \cite{We})}.]
Let $a\in C^\infty(\S)$ be a positive function.
For $r\in[0,1)$ and $\alpha\in\R$, let $b_{r,\alpha}\in C^\infty(\S)$ be the function conformally equivalent to $a$ via the conformal transformation
$\Phi_{r,\alpha}:\D\rightarrow\D$ defined by
$$
\Phi_{r,\alpha}(z)=e^{i\alpha}\frac{z-r}{1-rz},
$$
i.e.,
\begin{equation}
b_{r,\alpha}(z)=a(\Phi_{r,\alpha}(z))|\Phi'_{r,\alpha}(z)|^{-1}=a\Big(e^{i\alpha}\frac{z-r}{1-rz}\Big)\frac{|1-rz|^2}{1-r^2}\quad (z\in\S).
                                          \label{A.7}
\end{equation}
Define the function $H\in C^\infty([0,1)\times\R)$ by
\begin{equation}
H(r,\alpha)= (1-r^2)e^{-i\alpha}(\widehat{b_{r,\alpha}})_{1}={1\over 2\pi}\int_0^{2\pi}e^{-i(\alpha+\theta)}a\Big(e^{i\alpha}\frac{e^{i\theta}-r}{1-re^{i\theta}}\Big)|1-re^{i\theta}|^2\,d\theta.
                                          \label{A.8}
\end{equation}
Observe that $H(0,\alpha)={\hat a}_1$ for every $\alpha$. Therefore $H$ can be considered as the continuous function $H:{\stackrel\circ{\D}}\rightarrow\C$ such that $H(re^{i\alpha})=H(r,\alpha)$.

The integrand in \eqref{A.8} is bounded uniformly in $(r,\alpha,\theta)\in[0,1)\times\R\times\R$ and
$$
a\Big(e^{i\alpha}\frac{e^{i\theta}-r}{1-re^{i\theta}}\Big)\rightarrow a(-e^{i\alpha})\quad\mbox{as}\quad r\rightarrow1-0.
$$
Therefore
\begin{equation}
H(r,\alpha)= a(-e^{i\alpha})e^{-i\alpha}{1\over 2\pi}\int_0^{2\pi}e^{-i\theta}|1-e^{i\theta}|^2\,d\theta+E(r,\alpha),
                                          \label{A.9}
\end{equation}
where the error term $E(r,\alpha)\rightarrow0$ uniformly in $\alpha$ as $r\rightarrow1-0$.
Substituting the value
$$
{1\over 2\pi}\int_0^{2\pi}e^{-i\theta}|1-e^{i\theta}|^2\,d\theta=
{1\over 2\pi}\int_0^{2\pi}e^{-i\theta}(2-e^{i\theta}-e^{-i\theta})\,d\theta=-1
$$
into \eqref{A.9}, we obtain
\begin{equation}
H(r,\alpha)= -a(-e^{i\alpha})e^{-i\alpha}+E(r,\alpha).
                                          \label{A.10}
\end{equation}

Since $a$ is a positive function, \eqref{A.10} implies that the closed curve $\alpha\mapsto H(r,\alpha)\ (0\le\alpha\le2\pi)$ goes around 0 if $r\in(0,1)$ is sufficiently close to 1. Therefore there exists $(r_0,\alpha_0)\in[0,1)\times\R$ such that $H(r_0,\alpha_0)=0$. Together with \eqref{A.8}, this gives
$(\widehat{b_{r_0,\alpha_0}})_{1}=0$.
\end{proof}

\end{document}